\documentclass[10pt]{amsart}
\usepackage{amsmath, amsfonts, amsthm, amssymb, setspace, xcolor, graphicx, soul, float, mathtools,mathrsfs,extarrows,tikz-cd,tikz,array,longtable}
\usepackage[normalem]{ulem}
\usetikzlibrary{decorations.pathreplacing}
\usepackage[a4paper,left=3.2cm,right=3.2cm,top=3cm,bottom=3cm]{geometry}
\usepackage[hidelinks]{hyperref}
\title{A Proof of Gluck's Conjecture}
\author{Baoyu Zhang}
\address{School of Mathematics, University of Birmingham, Birmingham B15 2TT, UK}
\email{baoyuzhang.math@outlook.com}
\date{August 11, 2026}
\usepackage[scr=boondox]{mathalfa}
\newtheorem{theorem}{Theorem}[section]
\newtheorem{proposition}[theorem]{Proposition}
\newtheorem{lemma}[theorem]{Lemma}
\newtheorem{corollary}[theorem]{Corollary}
\theoremstyle{definition}
\newtheorem{definition}[theorem]{Definition}

\newtheorem{conjecture}[theorem]{Conjecture}

\makeatletter
\renewcommand{\le}{\leqslant}
\renewcommand{\ge}{\geqslant}
\renewcommand{\leq}{\leqslant}
\renewcommand{\geq}{\geqslant}
\allowdisplaybreaks

\expandafter\let\expandafter\oldproof\csname\string\proof\endcsname
\let\oldendproof\endproof
\renewenvironment{proof}[1][\proofname]{%
  \oldproof[\bfseries #1]%
}{\oldendproof}
\makeatother

\newcommand{\GammaL}{\Gamma{\rm L}}
\newcommand{\pow}{\mathcal{P}}
\newcommand{\orbit}{\mathcal{O}}
\DeclareMathOperator{\Gal}{Gal}
\DeclareMathOperator{\GL}{GL}
\DeclareMathOperator{\SL}{SL}
\DeclareMathOperator{\Irr}{Irr}
\DeclareMathOperator{\Sym}{Sym}
\newcommand{\GAP}{\textsf{GAP}}
\newcommand{\Magma}{\textsf{Magma}}
\numberwithin{equation}{section}
\numberwithin{table}{section}
\begin{document}

\subjclass[2020]{Primary 20C15; Secondary 05E18, 20B15, 20D10}
\keywords{Gluck's conjecture, character degrees, Fitting subgroup,
regular orbits, solvable linear groups}

\begin{abstract}
For a finite group $H$, let $\nu(H)$ denote the maximum order of a nilpotent
subgroup of $H$. We prove that every finite solvable transitive permutation
group $P$ on a finite set $\Omega$ has a subset $\Delta\subseteq\Omega$ such
that $|P:P_\Delta|\ge\nu(P_\Delta)$. We also prove that if a finite solvable
group $H$ acts faithfully and completely reducibly on a finite module $V$,
then some $x\in V$ satisfies $|H:H_x|\ge\nu(H_x)$. Consequently, we settle
Gluck's well-known conjecture, open since 1985: every finite solvable group $G$
satisfies $|G:\mathbf{F}(G)|\le b(G)^2$, where $\mathbf{F}(G)$ is the largest
normal nilpotent subgroup of $G$ and $b(G)$ is the largest degree of an irreducible complex character of $G$.
\end{abstract}

\maketitle

\setcounter{tocdepth}{1}

\section{Introduction}

In 1985, Gluck proposed the following conjecture~\cite{Glu85}.

\begin{conjecture}[Gluck]\label{conj:gluck}
Let $G$ be a finite solvable group. Then $|G:\mathbf{F}(G)|\le b(G)^2$, where $b(G)$ is the largest degree of an irreducible complex character of $G$.
\end{conjecture}

The index $|G:\mathbf{F}(G)|$ measures how far $G$ is from being nilpotent.
$b(G)$ is a representation-theoretic invariant.
Conjecture~\ref{conj:gluck} asserts
that the two are closely tied. Our main result confirms the conjecture.

\begin{theorem}\label{thm:gluck-holds}
Conjecture~\ref{conj:gluck} is true.
\end{theorem}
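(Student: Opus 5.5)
Theorem~\ref{thm:gluck-holds} will be deduced from the two orbit results announced in the abstract, via the standard reduction of Gluck's conjecture to an orbit problem on the Fitting quotient. Write $F=\mathbf{F}(G)$, $\Phi=\Phi(G)$ and $\bar G=G/\Phi$. Since $\mathbf{F}(\bar G)=F/\Phi$ and $b(\bar G)\le b(G)$, we may assume $\Phi(G)=1$. In that case $F$ is abelian; more precisely, $F=\widehat F$ is a direct product of minimal normal subgroups. Put $H=G/F$ and $V=\Irr(F)$, the dual of $F$. By Gaschütz's theorem $F$ is completely reducible as a $G$-module and has a complement. Hence $H$ acts faithfully and completely reducibly on $V$, since $\mathbf{C}_G(F)=F$. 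The key inequality is then the following: for any $\lambda\in V$ with stabilizer $H_\lambda=I_G(\lambda)/F$, some $\chi\in\Irr(G\mid\lambda)$ has degree at least $|H:H_\lambda|$ times the largest degree of an irreducible projective character of $H_\lambda$ (with the relevant cocycle). Because $F$ is complemented and $\lambda$ is linear, $\lambda$ extends to its inertia group $I_G(\lambda)$, so the cocycle is trivial. We therefore get
\[
b(G)\ \ge\ |H:H_\lambda|\, b(H_\lambda).
\]

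Now apply the module theorem from the abstract to $H$ acting on $V$. It gives $\lambda\in V$ with $|H:H_\lambda|\ge \nu(H_\lambda)$. Combining this with the previous display yields
\[
b(G)^2\ \ge\ |H:H_\lambda|\cdot|H:H_\lambda|\,b(H_\lambda)^2\ \ge\ |H:H_\lambda|\,\nu(H_\lambda)\,b(H_\lambda)^2 .
\]
It therefore suffices to show $|H_\lambda|\le \nu(H_\lambda)\,b(H_\lambda)^2$. This is exactly a statement of the shape ``$|K|\le \nu(K)\,b(K)^2$'' for a solvable group $K$. We prove it by the same reduction applied inductively, using $|K:\mathbf{F}(K)|\le b(K)^2$, which holds by induction on $|G|$ since $|H_\lambda|<|G|$, together with $|\mathbf{F}(K)|\le\nu(K)$. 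Thus we run an induction on $|G|$ proving Conjecture~\ref{conj:gluck}. Within the inductive step, we apply the inductive hypothesis to $K=H_\lambda$, giving
\[
|H_\lambda|=|H_\lambda:\mathbf{F}(H_\lambda)|\,|\mathbf{F}(H_\lambda)|\le b(H_\lambda)^2\,\nu(H_\lambda).
\]
Putting everything together gives $|G:F|=|H|=|H:H_\lambda|\,|H_\lambda|\le b(G)^2$.

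The main obstacle is the module theorem itself: finding $x\in V$ with $|H:H_x|\ge\nu(H_x)$. My plan there is to reduce to the case of $V$ irreducible, and then to quasi-primitive modules via induction from a maximal system of imprimitivity. In the imprimitive case $V=\bigoplus_{i\in\Omega}W_i$ with $H$ permuting $\Omega$ transitively and solvably. This is where the permutation theorem is used: choose $\Delta\subseteq\Omega$ with $|P:P_\Delta|\ge\nu(P_\Delta)$, and combine it with vectors chosen in the $W_i$ for $i\in\Delta$ and $i\notin\Delta$, using the fact that $\nu$ is submultiplicative over extensions. In the quasi-primitive case, $H$ embeds in $\GammaL(1,q)$ or has a normal extraspecial-type subgroup. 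There I expect to invoke known regular-orbit results and to treat the few small exceptions by direct computation.
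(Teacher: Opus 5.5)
Your deduction of the statement from the module theorem is correct and is the paper's Lemma~\ref{lem:reduction} almost step for step. It passes to $\Phi(G)=1$, takes the dual of $\mathbf{F}(G)$ as a faithful completely reducible module for a complement $H$, and extends $\lambda$ to $\mathrm I_G(\lambda)=\mathbf{F}(G)\rtimes H_\lambda$. Gallagher and Clifford then give $b(G)\ge|H:H_\lambda|\,b(H_\lambda)$, and induction on $H_\lambda$ with $|\mathbf{F}(H_\lambda)|\le\nu(H_\lambda)$ finishes. Granted Theorem~\ref{thm:abundance-intro}, that is the paper's entire proof. The genuine gap is in your sketch of the module theorem, which you rightly call the main obstacle.

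In the imprimitive step, submultiplicativity of $\nu$ yields Theorem~\ref{thm:coordinate}. Its local factors are $\rho_{L_i}(x_i)$ for the bottom groups $L_i\trianglelefteq K_i\trianglelefteq J$ of the coordinate chains, not $\rho_J(x_i)$. The top factor is controlled by $\rho_P(\Delta)$ only through the colouring, which requires the entries on $\Delta$ to lie in $J$-orbits different from those off $\Delta$. For an arbitrary abundant $\Delta$ to suffice, you need two $J$-orbits whose ratios multiply to at least $1$ for every such $L$ (a balanced pair, Definition~\ref{def:balanced-pair}). Some primitive $J$ have none. For $J=\SL(2,3)$ on $\mathbb{F}_3^{\,2}$ the only orbits are $\{0\}$ and $\mathbb{F}_3^{\,2}\setminus\{0\}$, and $\rho_J(0)\rho_J(v)=\frac18\cdot\frac83=\frac13$.

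Concretely, let $H=\SL(2,3)\wr C_4$ act on $(\mathbb{F}_3^{\,2})^4$, with $C_4$ regular on the summands. Then $\Delta=\{1,2\}$ has trivial stabiliser in $C_4$, so it is abundant. Yet a vector $x$ that is zero on $\Delta$ and nonzero off it, or the reverse, has $H_x\cong\SL(2,3)^2\times C_3^2$. So $|H:H_x|=256$ while $\nu(H_x)=8^2\cdot3^2=576$ (attained by $Q_8^2\times C_3^2$), and $x$ is not abundant. Hence $\Delta$ must be chosen using the local data. This is why the paper proves the weighted Theorem~\ref{thm:weighted}, for weights with $z_in_i^3\ge1$. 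It then verifies the cubic condition $z_Ln_L^3\ge1$ for the nine actions of Table~\ref{tab:cubic-actions} and for every metacyclic $J$ without a regular vector (Lemma~\ref{lem:branch-semilinear} and Proposition~\ref{prop:semilinear-finite}). Your plan contains neither ingredient.

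Less seriously, the primitive case cannot rest on regular-orbit results plus finitely many computed exceptions. The group $\GammaL(1,p^f)$ with $f\ge2$ is primitive and metacyclic of order $(p^f-1)f>p^f-1$, so it has no regular orbit, and this is an infinite family. The paper handles metacyclic groups by regular pairs (Theorem~\ref{thm:semilinear} and Lemma~\ref{lem:regular-pair}).
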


Conjecture~\ref{conj:gluck} was previously established in several cases:
for groups of odd order by Espuelas~\cite{Espuelas91}, for solvable groups with abelian Sylow $2$-subgroups by Dolfi and Jabara~\cite{DolfiJabara07}, for solvable $3'$-groups by Yang~\cite{Yang11a},
for groups in which $|\mathbf{F}(G)|$ is coprime to $6$ by Cossey,
Halasi, Mar\'oti and Nguyen~\cite{CHMN15}, for solvable $S_4$-free
groups by Meng, Ballester-Bolinches and Esteban-Romero~\cite{MengBBER19}, and for a class of iterated wreath products by Meng and Guo~\cite{MengGuo25}. Here $S_4$-free means having no section
$H/K\cong S_4$, where $K\trianglelefteq H\leq G$. Related questions
involving average character degrees were studied by
Moret\'o~\cite{Moreto23}.

The right-hand side of Conjecture~\ref{conj:gluck}, the largest degree $b(G)$ of an irreducible complex character, is itself a subtle invariant. Even for symmetric groups, the largest irreducible character degrees are not fully understood and have recently been investigated computationally by Craven~\cite{Craven25}. For solvable groups, Section~\ref{sec:reduction} shows that Clifford theory connects bounds of the form $|G:\mathbf{F}(G)|\leq b(G)^c$ with estimates for their orbits on faithful completely reducible modules. Large orbits, and especially regular orbits, have therefore played an important role. See, for example, \cite{Dolfi08,KellerYang14,MengBBER20,Wolf99}. The problem of regular orbits for solvable primitive linear groups was studied by Yang and his collaborators \cite{Yang10, Yang11b, YangVV22}. We call a faithful irreducible quasi-primitive linear action of a solvable nonmetacyclic group \emph{residual} if it has no regular orbit. The classification of residual actions used here is proved by Holt and Yang~\cite{HY23}. The
companion problem of regular orbits on the {\it power set} of a permutation domain, that is, of subsets with trivial setwise stabiliser, was studied by Gluck \cite{Glu83, Glu25}, by Seress \cite{Seress97} and by Zhang \cite{Zhang97}, among others. Such results enter the imprimitive linear case through the induced permutation action on the summands.

Before the present work, the exponent in the best known general bound had been
reduced in stages: to $13/2$ by Gluck in \cite{Glu85}, to $3$ by
Moret\'o and Wolf \cite{MoretoWolf04} by way of Seress's theorem that a primitive
solvable permutation group has a base of size at most four \cite{Seress96}, and to $\log(6\cdot 24^{1/3})/\log 3\approx 2.595$ by Yang~\cite{Yang23}.

Gluck identified the obstruction to proving the bound in Conjecture~\ref{conj:gluck} in the very paper that posed the conjecture, where he wrote that his methods were ``not strong enough to give best possible bounds, even for solvable groups'' \cite[p. 443]{Glu85}. More precisely, if a solvable group $H$ acts faithfully and completely
reducibly on a finite module $V$, the approach would require a vector
$v\in V$ such that $|H_v|\le |H|^{1/2}$, equivalently $|v^H|\ge |H|^{1/2}$. But the imprimitive action of $\GL(2,2)\wr S_4$ on
$\mathbb{F}_2^{\,8}$ ``shows that this stronger version of Theorem A does
not hold''~\cite[p.~447]{Glu85}, since its largest orbit has length less than the square root of the group order. Meng and Guo~\cite[Lemma~8(2)]{MengGuo25} construct a subgroup of $\GL(4,3)$ of order $1\,152$ whose orbits on $\mathbb{F}_3^{\,4}$ have lengths $1$, $32$, $24$ and $24$. Thus no argument requiring an orbit of length at least the square root of the group order can prove the conjecture in general. We abandon the search for a single large orbit. The reduction of Section~\ref{sec:reduction} shows that Conjecture~\ref{conj:gluck} follows from the weaker assertion that $V$ always contains an \emph{abundant} element, as defined below.

\noindent{\it Notation and terminology.} We use standard notation from group
theory and character theory, as in~\cite{Isa76}. All groups and modules are
finite, and modules are written additively. For a group $H$ we write $\pi(H)$ for the set of primes dividing $|H|$, $O_p(H)$ for the largest normal $p$-subgroup of $H$, ${\rm Irr}(H)$ for the set of irreducible complex characters, $\mathbf{F}(H)$ for the Fitting subgroup, $\Phi(H)$ for the Frattini subgroup, and $\mathbf{C}_H(\cdot)$ and $\mathbf{N}_H(\cdot)$ for centralisers and normalisers, respectively. We write $\nu(H)$ for the maximum order of a nilpotent subgroup of $H$. This invariant is denoted $|H|_{\rm nil}$ by Hung and Yang in their study of bounds involving nilpotent subgroups and largest character degrees~\cite{HungYang20}. When $H$ acts on a set, $H_\Delta$ denotes the setwise stabiliser of a subset $\Delta$, and $\Delta^{c}$ its complement. For a point $x$, $H_x:=H_{\{x\}}$ is its stabiliser. Actions are written on the right, so $x^{gh}=(x^g)^h$. We set $$\rho_H(\Delta)=\frac{|H:H_\Delta|}{\nu(H_\Delta)}$$ and $\rho_H(x):=\rho_H(\{x\})$. The quantities $\rho_H(\Delta)$ and $\rho_H(x)$ are the \emph{abundance ratios} of $\Delta$ and $x$, respectively. A subset or point is \emph{abundant} if its abundance ratio is at least $1$. For any $H$-set, an $H$-orbit is \emph{regular} if the stabiliser of one (equivalently, every) point in the orbit is trivial. Accordingly, a vector $v$ is regular if $H_v=1$, and a subset $\Delta$ is regular if its setwise stabiliser $H_\Delta$ is trivial. Throughout, $q$ denotes a prime power and $\mathbb{F}_q$ a field of order $q$. A transitive permutation group is \emph{primitive} if its only invariant partitions are the partition into singletons and the partition consisting of the whole set. An irreducible linear group is {\it primitive} if it admits no nontrivial system of imprimitivity. An irreducible linear group $H\leq\GL(V)$ is {\it quasi-primitive} if every normal subgroup $N\trianglelefteq
H$ acts homogeneously on $V$, that is, if $V$ is a direct sum of mutually
isomorphic irreducible $N$-modules. This is the linear
meaning of quasi-primitivity throughout the paper. In this paper, $\Gamma$
denotes a set of coordinate indices and $\Delta$ a subset of a permutation
domain. For blocks $D_1,\ldots,D_m$ or summands $W_1,\ldots,W_m$, we write
$I=\{1,\ldots,m\}$ for the index set and regard the induced permutation group
as acting on $I$, with $D_i$ or $W_i$ represented by $i$.
Table~\ref{tab:terminology} lists the notation and terminology used repeatedly
below.
\begin{table}[H]
\caption{Where recurring notation and terminology are introduced.}
\label{tab:terminology}
\centering
\footnotesize
\renewcommand{\arraystretch}{1.25}
\begin{tabular}{l|l}
 term & location\\\hline
 \emph{residual action} & Introduction\\
 \emph{abundant} subset or point ($\rho\ge1$) & Introduction; Section~\ref{sec:reduction}\\
 \emph{regular pair} --- an ordered pair of points & Section~\ref{sec:reduction}\\
 \emph{coordinate}; \emph{coordinate set}; \emph{coordinate tuple};
 \emph{colour} & Section~\ref{sec:coordinate}\\
 \emph{local group}; \emph{top group} $P$; \emph{top factor} &
   Section~\ref{sec:coordinate}\\
 \emph{coordinate chain} $L\trianglelefteq K\trianglelefteq J$;
 \emph{bottom group} $L$ & Section~\ref{sec:coordinate}\\
 \emph{chain-abundant orbit} & Definitions~\ref{def:chain-label}
   and~\ref{def:chain-label-linear}\\
 \emph{cubic condition} & Section~\ref{sec:weighted}\\
 \emph{cubic pair} & Definition~\ref{def:cubic-pair}\\
 \emph{balanced pair} --- an ordered pair of \emph{orbits} &
   Definition~\ref{def:balanced-pair}
\end{tabular}
\end{table}

\begin{definition}\label{def:cr}
Let $H$ be a group and $V$ an $H$-module. We say that $H$ acts {\it completely
reducibly} on $V$, and that $V$ is a {\it completely reducible} $H$-module, if every
$H$-invariant subgroup of $V$ has an $H$-invariant direct complement. Equivalently,
each $O_p(V)$ is elementary abelian and completely reducible as an
$\mathbb{F}_pH$-module, and
$$V=\bigoplus_{p\in\pi(V)}O_p(V).$$
Thus every irreducible summand of $V$ is a finite-dimensional
$\mathbb{F}_pH$-module for some prime $p$.
\end{definition}

The heart of the paper is the following statement, which makes no mention of
character degrees. That it implies Conjecture~\ref{conj:gluck} is shown in
Section~\ref{sec:reduction}.

\begin{theorem}\label{thm:abundance-intro}
Let $V$ be a faithful completely reducible $H$-module for a solvable group $H$. Then $V$ contains an abundant element.
\end{theorem}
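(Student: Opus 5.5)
We argue by induction on $|H|+|V|$, reducing successively to the irreducible, then quasi-primitive, and finally primitive cases. Two observations drive the reductions. First, for $x\in V$ and any subgroup $K\le H$ we have $\nu(H_x)\le \nu(K\cap H_x)\,|H_x:K\cap H_x|$. Second, $\nu$ is submultiplicative along normal series: if $N\trianglelefteq H_x$ then $\nu(H_x)\le\nu(N)\,\nu(H_x/N)$.

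\textbf{Reducible case.} Write $V=V_1\oplus V_2$ with $V_1,V_2$ nonzero and $H$-invariant, and let $C=\mathbf{C}_H(V_1)$. Choose $x_1\in V_1$ abundant for $H/C$ acting on $V_1$; this exists by induction. Then $V_2$ is a faithful completely reducible module for $C$, so we may choose $x_2\in V_2$ abundant for $C$. We want $x=x_1+x_2$, or more precisely $x_1+x_2^{g}$ for a suitable $g\in H_{x_1}$, to be abundant. Note that $H_x=H_{x_1}\cap H_{x_2}$, and that $H_x\cap C=C_{x_2}$ is normal in $H_x$ with quotient embedded in $(H/C)_{x_1}$. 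Hence
$$\nu(H_x)\le \nu(C_{x_2})\,\nu\bigl(H_xC/C\bigr).$$
The remaining difficulty is that $H_xC/C$ may be smaller than $H_{x_1}C/C$. The index loss $|H_{x_1}:H_x|$ then has to be compared with the loss in $\nu$. Here the orbit-counting $|H:H_x|=|H:H_{x_1}|\,|H_{x_1}:H_x|$ should compensate, because passing to a subgroup of index $m$ decreases $\nu$ by at most a factor of $m$. Carrying this through gives $\rho_H(x)\ge\rho_{H/C}(x_1)\rho_C(x_2)\ge 1$.

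\textbf{Imprimitive case.} Now suppose $V=W_1\oplus\cdots\oplus W_m$ is a system of imprimitivity with $m>1$, chosen so that the stabiliser $H_1$ of $W_1$ acts primitively (or quasi-primitively) on $W_1$. Let $P$ be the solvable transitive top group on $I$, and let $B$ be the base group, namely the kernel of the action on $I$. The strategy is to colour coordinates using the known abundance on the local module $W_1$, which comes from the primitive case, combined with a subset $\Delta\subseteq I$ satisfying $|P:P_\Delta|\ge\nu(P_\Delta)$. This is the permutation-group statement in the abstract, and it is proved separately by induction on block systems, using Seress/Gluck-type regular-set results for primitive solvable groups. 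The vector is assembled as follows: on the coordinates in $\Delta$ we place one abundant local vector, and on $I\setminus\Delta$ we place a different one, or $0$. The stabiliser of the resulting vector then maps into $P_\Delta$, and its intersection with $B$ is a product of local stabilisers. Multiplicativity of $\nu$ across this normal series gives the bound. The obstacle is the small local cases, where only one nontrivial orbit type is available (for instance $\GL(2,2)$, as in Gluck's wreath example). For these one needs \emph{pairs} of local points, and the bookkeeping requires the chain and cubic conditions advertised in the table.

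\textbf{Primitive and quasi-primitive case.} Here $H$ normalises the structure $\mathbf{F}(H)$ given by the standard theory: an extraspecial-type normal subgroup $E$ together with a cyclic centre, and $H/\mathbf{F}(H)$ embedded in a symplectic group over $E/Z$. If $H$ has a regular orbit, any regular vector $x$ is abundant, since $\nu(1)=1$. Metacyclic $H$ acting semilinearly are handled directly: there $H_x$ is cyclic modulo a small group, and the orbit lengths can be computed. The residual cases form the finite list of Holt and Yang~\cite{HY23}, and these are checked one by one, by computation if necessary, to find $x$ with $|x^H|\ge\nu(H_x)$.

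I expect the main obstacle to be the imprimitive step. Single-orbit arguments fail there, so the local abundance must be matched against the top group's abundant subset in a way that survives the induction. I would first try to prove a stronger, inductively stable statement: the existence of an orbit that is ``chain-abundant'' relative to a coordinate chain $L\trianglelefteq K\trianglelefteq J$.
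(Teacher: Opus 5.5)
Your overall architecture is the paper's: split off an irreducible summand, handle primitive modules through regular orbits, the metacyclic semilinear case and the residual list, and handle imprimitive modules by combining an abundant subset of the top group with local information.

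Your reducible case is essentially Lemma~\ref{lem:direct-sum}. The ``remaining difficulty'' you describe there is not a real one. Since $C\le H_{x_1}$, you get $|H:H_x|\ge|H:H_{x_1}|\,|C:C_{x_2}|$ directly. A smaller $H_xC/C\le(H/C)_{x_1}$ only lowers $\nu$. So you need neither a conjugate $x_2^g$ nor any trade-off between index and $\nu$.

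\textbf{The imprimitive step.} The genuine gap is here. You leave it as a programme, and the one concrete mechanism you state is false: $B\cap H_x$ is not a product of local stabilisers. In general $B$ is only a subdirect product of its projections to the summands, so its actions on different $W_i$ are not independent. The paper replaces this with the coordinate chain inequality, Theorem~\ref{thm:coordinate}. It filters $B$ through the pointwise stabilisers $B_i$ of $W_1,\dots,W_i$ and obtains $\rho_H(x)\ge(|P:T_x|/\nu(T_x))\prod_i\rho_{L_i}(x_i)$, where the bottom groups satisfy $L_i\trianglelefteq K_i\trianglelefteq J$. So the local input needed is not an abundant vector for $J$, which is all your primitive case supplies. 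It is a fixed $J$-orbit, or pair of orbits, that works for every such $L$, with representatives allowed to vary with $L$.

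A second problem is filling $I\setminus\Delta$ with $0$. Each such coordinate costs a factor $\rho_L(0)=1/\nu(L)<1$. For local groups transitive on nonzero vectors (for example $\GL(2,2)$, $\GammaL(1,8)$, or the nine actions in Table~\ref{tab:cubic-actions}), no second nonzero orbit exists to use instead. The paper needs two devices your sketch lacks:
\begin{enumerate}
\item When $J$ has a balanced pair (two chain-abundant orbits, a nonzero regular vector together with $0$, or a regular pair in distinct orbits), compare the assignment along $\Gamma$ with the swapped one. Both have top factor at least $\rho_P(\Gamma)$ because $P_\Gamma=P_{\Gamma^c}$, and their local products multiply to at least $1$.
\item Otherwise, prove $z_Ln_L^3\ge1$ for every bottom group. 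In the metacyclic case this is a fixed-point count in $\GammaL(1,Q)$ plus a computation for $Q<2^{18}$; for the residual actions it is a computation. Then apply the weighted transitive theorem, Theorem~\ref{thm:weighted}. This is a genuine strengthening of the abundant-subset theorem, with its own induction: symmetrisation to $\rho_P(\Delta)r^{|\Omega|-4|\Delta|}$ and cubic pairs for primitive permutation groups.
\end{enumerate}

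\textbf{Smaller points in the primitive case.} The Holt--Yang matrix data omit nine linearly primitive classes in $\GL(8,3)$, so checking that list alone would not cover every residual action. For metacyclic primitive groups, ``the orbit lengths can be computed'' is not yet an argument. The paper gets an abundant vector from a regular pair, via Theorem~\ref{thm:semilinear} and Lemma~\ref{lem:regular-pair}.
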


The proof of Theorem~\ref{thm:abundance-intro} uses
Theorem~\ref{thm:top-hni}, which gives an abundant subset in every solvable transitive permutation group. The proof of
Theorem~\ref{thm:top-hni} requires finite calculations only for primitive
solvable permutation groups of degree at most nine. In particular, that proof is independent of the classification of residual actions in~\cite{HY23} and of the finite calculations for primitive linear groups used later in the proof of Theorem~\ref{thm:abundance-intro}. We write $P$, rather than $H$, for the permutation group in the following theorem, since linear and permutation actions often occur together later in the paper.

\begin{theorem}\label{thm:top-hni}
Every finite solvable transitive permutation group $P$ on a finite set $\Omega$ has
an abundant subset.
\end{theorem}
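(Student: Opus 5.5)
We argue by induction on $|\Omega|$, splitting according to whether $P$ is primitive. The quantity to control is $\rho_P(\Delta)=|P:P_\Delta|/\nu(P_\Delta)$, and it helps to keep two facts in mind. First, if $P$ has a regular subset (setwise stabiliser trivial), then $\rho_P(\Delta)=|P|\ge 1$ and we are done. Second, $\nu$ is submultiplicative across extensions: for $N\trianglelefteq X$ we have $\nu(X)\le \nu(N)\,\nu(X/N)$, since a nilpotent subgroup $Y\le X$ satisfies $|Y|=|Y\cap N|\,|YN/N|$.

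\textbf{Primitive case.} Here $P$ is an affine group $V\rtimes P_0$ with $P_0\le \GL(d,p)$ irreducible and $|\Omega|=p^d$. By the theorem of Seress (and Gluck) on regular subsets, every solvable primitive group has a regular set except for finitely many exceptions, all of degree at most $9$ (or at most $32$ in some formulations; the sharper degree bound is what the introduction refers to). For the finitely many exceptions of degree at most $9$ — among them $S_3,S_4$ in natural action, $\mathrm{AGL}(1,5)$, $\mathrm{AGL}(1,7)$, $\mathrm{A\Gamma L}(1,8)$, $\mathrm{ASL}(2,3)$ and $\mathrm{AGL}(2,3)$ — we exhibit explicit subsets. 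For example, in $S_4$ a $2$-subset has stabiliser of order $4$, index $6$, and $\nu=4$, giving ratio $3/2$. In $\mathrm{AGL}(2,3)$ (order $432$) we take a subset whose stabiliser is a $2$-group or small, so that $|P:P_\Delta|\ge |P_\Delta|$. In each case we check $|P:P_\Delta|\ge\nu(P_\Delta)$ by a short computation. For any larger exceptions we would invoke the regular-set theorem directly.

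\textbf{Imprimitive case.} Choose a maximal block system $\mathcal B=\{D_1,\dots,D_m\}$, with $m\ge2$. Let $K$ be the kernel of $P$ on $\mathcal B$ and let $\bar P=P/K$, which is primitive on $I$. Let $Q$ be the transitive group induced by $P_{\{D_1\}}$ on $D_1$. By induction $Q$ has an abundant subset $\Delta_1\subseteq D_1$. By the primitive case, $\bar P$ has an abundant subset $\Gamma\subseteq I$; in most cases this is even a regular subset, or a pair of disjoint subsets with trivial joint stabiliser, which gives flexibility.

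We build $\Delta$ "coordinatewise". On blocks indexed by $\Gamma$ we place a translate of $\Delta_1$, and on the other blocks a translate of $\Delta_1^c$ (or the empty set). Where we need to separate more colours than two, we use the complement and different local choices. The aim is that $P_\Delta$ maps into $\bar P_\Gamma$ and that $P_\Delta\cap K$ embeds into a product of conjugates of $Q_{\Delta_1}$. Then
\[
\nu(P_\Delta)\le \nu(\bar P_\Gamma)\prod_{i}\nu(Q_{\Delta_1})\quad\text{and}\quad |P:P_\Delta|\ge |\bar P:\bar P_\Gamma|\cdot\prod_i |Q:Q_{\Delta_1}|
\]
(up to the way $K$ projects), and multiplying the two abundance inequalities gives the result.

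\textbf{Main obstacle.} The hard step is the index estimate. $K$ need not be the full product $Q^m$: it is a subdirect-type subgroup, so $|K:K_\Delta|$ can be much smaller than $\prod|Q:Q_{\Delta_1}|$. Moreover, the colours (the choices $\Delta_1$ versus $\Delta_1^c$) must genuinely distinguish blocks, or $P_\Delta$ can permute them beyond $\bar P_\Gamma$.

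To handle the first issue I would track a chain $L\trianglelefteq K\trianglelefteq J$ and bound the nilpotent part of $K_\Delta$ via its projections to coordinates. I would strengthen the inductive hypothesis to a "chain-abundant" statement that remains valid under passage to subgroups of products, in the spirit of Definitions~\ref{def:chain-label}.

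To handle the second issue I would use that $Q$ has at least two inequivalent abundant-type orbits (a subset and its complement, or $\emptyset$ versus $\Delta_1$). When $|\Delta_1|\ne|D_1|/2$ these give distinct colours, so a regular two-colouring of $I$ by $\bar P$ suffices.

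The residual small cases, where $\bar P$ has no regular set or $\Delta_1$ is half of $D_1$, would be treated by a weighted count that compares the losses against the gain $|\bar P:\bar P_\Gamma|$.
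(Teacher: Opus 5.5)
\textbf{There is a genuine gap in your imprimitive step, and it comes from choosing maximal blocks.} With maximal blocks, the induction hypothesis gives you only $\rho_Q(\Delta_1)\ge1$ for the full transitive local group $Q$. The correct bookkeeping is Theorem~\ref{thm:coordinate}. It factors both $|K:K_\Delta|$ and $\nu(K_\Delta)$ through the bottom groups $L_i\trianglelefteq K_i\trianglelefteq J_i$, which leads to $\prod_i\rho_{L_i}(\Delta_i)$. Your mixed inequality, with the index through $|Q:Q_{\Delta_1}|$ and the nilpotent bound through $\nu(Q_{\Delta_1})$, is not valid, for exactly the subdirect reason you identify.

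Abundance for $Q$ also says nothing about a bottom group $L$. Already for $Q=S_4$, the subset $\{1,2\}$ has $\rho_{S_4}=3/2$. For the chain $\langle(12)(34)\rangle\trianglelefteq V_4\trianglelefteq S_4$, however, it has $\rho_L=1/2$.

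Your proposed remedy is to strengthen the hypothesis to a chain-abundant statement. That strengthening is precisely the missing content, and for a transitive imprimitive $Q$ you give no argument for it:
\begin{itemize}
\item $L$ need not be transitive.
\item Solvable transitive groups need not have regular subsets.
\item To prove such a statement for $Q$ you would have to decompose $L$ along the blocks of $Q$. $L$ need not even permute those blocks transitively, and its own bottom groups are subnormal of larger defect. So the hypothesis does not evidently reproduce itself under induction.
\end{itemize}

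\textbf{Your colour issue is also left open.} $\Delta_1$ and $\Delta_1^c$ may lie in a single $Q$-orbit. The empty set is never abundant for nontrivial $L$, since $\rho_L(\varnothing)=1/\nu(L)$. So the unspecified ``weighted count'' is carrying real weight. (Your primitive case is fine in outline.)

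\textbf{The paper avoids both problems by reversing your choice: it takes an unrefinable, that is minimal, block system.}
\begin{itemize}
\item The quotient $\overline P$ is then transitive of smaller degree, so induction gives an abundant $\Gamma$. At the top only plain abundance is needed, because the colour-preserving group $T_x$ lies in $\overline P_\Gamma$ and \eqref{eq:top-monotone} applies.
\item The chain condition is needed only for the local group $J$, which is now primitive, and there it is available:
  \begin{itemize}
  \item For degree $>9$, take a regular subset $\Delta_0$ with $|\Delta_0|\ne|D|/2$. It is regular for every subgroup $L$, and $\Delta_0,\Delta_0^c$ give two distinct chain-abundant orbits.
  \item Degrees at most $9$ are finite checks.
  \item $J\cong C_2$ uses the identity $\rho_L(\text{singleton})\,\rho_L(\varnothing)=1$: swap singleton and empty set on every block, and take whichever of the two choices has local product at least $1$.
  \end{itemize}
\end{itemize}
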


\noindent{\it Organisation of the paper.} Section~\ref{sec:reduction} shows that Theorem~\ref{thm:abundance-intro} implies Conjecture~\ref{conj:gluck} and reduces the proof of Theorem~\ref{thm:abundance-intro} to faithful irreducible modules over prime fields. Section~\ref{sec:coordinate}
establishes the coordinate chain inequality, the main quantitative tool for
imprimitive modules. Sections~\ref{sec:top-hni} and~\ref{sec:weighted} prove Theorem~\ref{thm:top-hni} and its weighted strengthening for solvable
transitive permutation groups. Section~\ref{sec:primitive} proves the abundance
theorem for primitive modules, using the residual classification and finite
verification in the nonmetacyclic case and a semilinear argument in the
metacyclic case. Section~\ref{sec:local-alternatives} establishes the stronger
local alternatives used in Section~\ref{sec:imprimitive} to prove the abundance
theorem for imprimitive modules.
Section~\ref{sec:gaschutz} completes the proof of Theorem~\ref{thm:abundance-intro}
and hence of Conjecture~\ref{conj:gluck}. The appendix describes the finite
calculations used in the proof and how to repeat them.

\section{Reduction to irreducible modules}
\label{sec:reduction}

Recall that a point $x$ of an $H$-set is {\it abundant} if $\rho_H(x)\ge1$.
For an $H$-set $X$, we call $(x,y)\in X\times X$ a {\it regular pair} if
$H_x\cap H_y=1$. Equivalently, $(x,y)$ is a regular point for the diagonal
action of $H$ on $X\times X$.

\begin{lemma}\label{lem:regular-pair}
Let a finite group $H$ act on a finite set $X$, and let $(x,y)\in X\times X$
be a regular pair. Then $|H:H_x|\,|H:H_y|\ge\nu(H_x)\nu(H_y)$, and in particular
one of $x,y$ is abundant.
\end{lemma}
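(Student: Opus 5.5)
Since $H_x\cap H_y=1$, the subgroup $H_y$ meets $H_x$ trivially. The map $H_y\to\{\text{right cosets of }H_x\text{ in }H\}$, $h\mapsto H_xh$, is therefore injective: $H_xh=H_xh'$ forces $h'h^{-1}\in H_x\cap H_y=1$. This gives $|H_y|\le|H:H_x|$. By symmetry, $|H_x|\le|H:H_y|$.

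Multiplying the two bounds yields $|H:H_x|\,|H:H_y|\ge|H_x|\,|H_y|$. Since $\nu(K)\le|K|$ for every finite group $K$, we obtain
\[
|H:H_x|\,|H:H_y|\ge|H_x|\,|H_y|\ge\nu(H_x)\nu(H_y).
\]
This is the first assertion. Equivalently, in terms of abundance ratios, $\rho_H(x)\rho_H(y)\ge1$.

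For the second assertion, suppose neither point is abundant. Then $\rho_H(x)<1$ and $\rho_H(y)<1$, so $\rho_H(x)\rho_H(y)<1$ because both ratios are positive. This contradicts $\rho_H(x)\rho_H(y)\ge1$, so at least one of $x,y$ is abundant.

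There is no real obstacle here. The only point needing care is the coset-injectivity step, which uses $|H_xH_y|=|H_x||H_y|/|H_x\cap H_y|\le|H|$. The bound $\nu\le|\cdot|$ is crude but suffices.
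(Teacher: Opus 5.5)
Your proof is correct and is essentially the paper's argument. Both rest on $|H_x|\,|H_y|=|H_xH_y|\le|H|$ (your coset-injectivity step is exactly this inequality) together with $\nu(K)\le|K|$, and both then read off $\rho_H(x)\rho_H(y)\ge1$; the paper compares both sides with $|H|$, whereas you compare them with $|H_x|\,|H_y|$, which is a cosmetic difference.
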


\begin{proof}
Since $H_x\cap H_y=1$ we have $|H_x|\,|H_y|=|H_xH_y|\le|H|$. Hence
$$\nu(H_x)\nu(H_y)\le|H_x|\,|H_y|\le|H|$$
and
$$|H:H_x|\,|H:H_y|=\frac{|H|^2}{|H_x|\,|H_y|}\ge|H|.$$
It follows that
$$\rho_H(x)\rho_H(y)=\frac{|H:H_x|\,|H:H_y|}{\nu(H_x)\nu(H_y)}\ge1,$$
and as both factors are positive, $\rho_H(x)\ge1$ or $\rho_H(y)\ge1$.
\end{proof}

The following lemma shows that Theorem~\ref{thm:abundance-intro} implies
Conjecture~\ref{conj:gluck}.

\begin{lemma}\label{lem:reduction}
If every faithful completely reducible module for a finite solvable group contains
an abundant vector, then Conjecture~\ref{conj:gluck} is true.
\end{lemma}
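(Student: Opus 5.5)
The plan is the standard Clifford-theoretic reduction through the Frattini quotient of the Fitting subgroup. I set up the module, choose an abundant character, and then bound character degrees over it; the last of these is where the real work lies.

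\emph{Setting up the module.} Let $G$ be solvable, $F=\mathbf{F}(G)$ and $\Phi=\Phi(G)$. Since $\Phi\le F$ and $\mathbf{F}(G/\Phi)=F/\Phi$, Gasch\"utz's theorem shows that $F/\Phi$ is abelian and is a direct product of minimal normal subgroups of $G/\Phi$. Moreover $\mathbf{C}_{G/\Phi}(F/\Phi)=F/\Phi$. Hence $H:=G/F$ acts faithfully and completely reducibly on $V:=F/\Phi$. The dual group $\widehat V=\Irr(F/\Phi)$ is again a faithful completely reducible $H$-module, because dualising preserves faithfulness and complete reducibility. By hypothesis it contains an abundant $\lambda$.

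Let $T=I_G(\lambda)$, so that $T/F=H_\lambda$. Abundance then reads
$$|G:T|\ge\nu(T/F).$$
By Clifford correspondence, every $\theta\in\Irr(T\mid\lambda)$ induces irreducibly to $G$, with $\theta^G(1)=|G:T|\,\theta(1)$. It therefore suffices to find $\theta\in\Irr(T\mid\lambda)$ with
$$|G:T|\,\theta(1)^2\ge|T:F|,$$
since then $b(G)^2\ge|G:T|^2\theta(1)^2\ge|G:T|\,|T:F|=|G:F|$. Using the abundance inequality, this in turn follows from
$$\theta(1)^2\,\nu(T/F)\ge|T:F|. \qquad (\ast)$$

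\emph{Proving $(\ast)$.} I pass to $\bar T=T/K$, where $K=\ker\lambda$. Because $\lambda$ is linear and $T$-invariant, $\bar F=F/K$ is cyclic and central in $\bar T$. Let $\bar M=\mathbf{F}(\bar T)\ge\bar F$. Then $M/F$ is a normal nilpotent subgroup of $T/F$, so $|M:F|\le\nu(T/F)$. Thus $(\ast)$ would follow from the existence of some $\theta\in\Irr(\bar T\mid\lambda)$ with
$$\theta(1)^2\ge|\bar T:\bar M|.$$
I would obtain this by running the argument inductively, with induction on $|G|$ and the statement strengthened to characters lying over a prescribed central linear character. The steps are:
\begin{itemize}
\item In $\bar T$ the cyclic central subgroup $\bar F$ lies in $\bar M$.
\item Pick $\mu\in\Irr(\bar M\mid\lambda)$ and repeat the Gasch\"utz/dual-module construction for $\bar T/\bar M$ acting on the Frattini quotient of $\bar M$, twisted by $\lambda$.
\item The hypothesis again supplies an abundant character, and the same Clifford bookkeeping gives a constituent of large degree lying over $\lambda$.
\end{itemize}

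The first two stages (the Gasch\"utz step and the Clifford reduction to $(\ast)$) are routine. The main obstacle is $(\ast)$: controlling the degrees of characters of $T$ lying over a fixed invariant $\lambda$, since $\lambda$ need not extend to $T$. My first attempt will be the relative induction just described, carried out inside $\bar T$ with $\bar F$ central. If that bookkeeping fails, the fallback is a direct projective-representation count for $T/F$ with the cocycle determined by $\lambda$, bounding the number of $\alpha$-regular classes by $\nu(T/F)$. I am less confident that this count holds.
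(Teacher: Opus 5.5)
Your reduction of the lemma to $(\ast)$ is correct, and it has the same shape as the paper's argument. However, $(\ast)$ is the substance of the lemma, and your proposal leaves it unproved. Neither route you sketch works as stated.

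The relative induction needs a stronger statement: a character of $\bar T$ over a prescribed faithful central $\lambda$ with $\theta(1)^2\ge|\bar T:\mathbf{F}(\bar T)|$. Its inductive step is unsupported. $\Irr(\bar M\mid\lambda)$ is not a module, so the hypothesis about abundant vectors in faithful completely reducible modules says nothing about how $\bar T/\bar M$ acts on it. Nor can you pass to a Frattini quotient of $\bar T$, since that may kill part of $\bar F$, on which $\lambda$ is faithful.

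The fallback needs $|\Irr(T\mid\lambda)|\le\nu(T/F)$. Even with trivial cocycle, this is the inequality $k(T/F)\le\nu(T/F)$ on class numbers, and you give no argument for it.

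The missing observation is that the obstacle you name, namely that $\lambda$ need not extend to $T$, never arises.
\begin{itemize}
\item Since $\Phi\le\ker\lambda$, you may work in $G/\Phi$ throughout. This costs nothing, because $b(G/\Phi)\le b(G)$ and $\mathbf{F}(G/\Phi)=F/\Phi$. The paper does this by taking a minimal counterexample and showing $\Phi(G)=1$.
\item Gasch\"utz's theorem gives more than you used: $F/\Phi$ is also complemented in $G/\Phi$. The paper proves this directly. For a minimal supplement $H$ of $\mathbf{F}(G)$, the intersection $H\cap\mathbf{F}(G)$ lies in $\Phi(H)$ and is normal in $G$, hence is trivial.
\item If $X/\Phi$ is a complement, Dedekind's law gives $T/\Phi=(F/\Phi)\rtimes\bigl((T\cap X)/\Phi\bigr)$. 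So $\lambda$ extends to a linear character $\widehat{\lambda}$ of $T/\Phi$, and hence of $T$.
\item By Gallagher's theorem, $\widehat{\lambda}\eta\in\Irr(T\mid\lambda)$ for every $\eta\in\Irr(T/F)$, and it has degree $\eta(1)$.
\item Now apply the conjecture itself, not a relative version, inductively to $T/F\cong H_\lambda$. This group is smaller than $G$ because $F\neq1$. Choose $\eta$ with $\eta(1)^2\ge|T/F:\mathbf{F}(T/F)|$. Since $|\mathbf{F}(T/F)|\le\nu(T/F)$, this gives $\eta(1)^2\ge|T:F|/\nu(T/F)$, which is exactly $(\ast)$.
\end{itemize}
This is the paper's proof. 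With the splitting in hand, the strengthened induction and the class-counting fallback are both unnecessary.
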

\begin{proof}
We choose a counterexample $G$ to Conjecture~\ref{conj:gluck} of minimal order.

We claim that $\Phi(G)=1$. Assume for a contradiction that $\Phi(G)\neq1$.
Then $G/\Phi(G)$ satisfies Conjecture~\ref{conj:gluck} by the minimal choice of
$G$. Moreover, every irreducible character of $G/\Phi(G)$ inflates to an irreducible
character of $G$, whence $b(G/\Phi(G))\le b(G)$. Since
$\mathbf{F}(G/\Phi(G))=\mathbf{F}(G)/\Phi(G)$,
$$|G:\mathbf{F}(G)|=|G/\Phi(G):\mathbf{F}(G/\Phi(G))|\le b(G/\Phi(G))^2\le b(G)^2,$$
contrary to $G$ being a counterexample to Conjecture~\ref{conj:gluck}. Hence
$\Phi(G)=1$.

Since $\Phi(K)\le\Phi(G)$ whenever $K\trianglelefteq G$, we have
$\Phi(O_p(\mathbf{F}(G)))=1$ for every $p\in\pi(\mathbf{F}(G))$. Hence every
$O_p(\mathbf{F}(G))$ is elementary abelian, and consequently $\mathbf{F}(G)$
is abelian.

Now choose $H\le G$ minimal by inclusion such that $G=\mathbf{F}(G)H$, and set
$N=H\cap\mathbf{F}(G)$. Since $\mathbf{F}(G)\trianglelefteq G$, we have
$N\trianglelefteq H$. We claim that $N\le\Phi(H)$. Otherwise some maximal
subgroup $L<H$ does not contain $N$. Then $H=NL$, and hence
$G=\mathbf{F}(G)H=\mathbf{F}(G)L$, contrary to the choice of $H$. Since
$\mathbf{F}(G)$ is abelian, it centralises
$N$. It follows from $G=\mathbf{F}(G)H$ and $N\trianglelefteq H$ that
$N\trianglelefteq G$.

Suppose that $N\neq1$. Since $\Phi(G)=1$, there is a maximal subgroup $M<G$
such that $N\not\le M$. As $N\trianglelefteq G$, maximality gives $G=NM$.
Since $N\le H$, Dedekind's modular law gives $H=H\cap NM=N(H\cap M)$.
Since $N\leq\Phi(H)$, every element of $N$ is a non-generator of $H$. The equality $H=N(H\cap M)$ therefore forces $H=H\cap M\leq M$, contrary to $N\not\leq M$. Therefore $N=1$, so $G=\mathbf{F}(G)\rtimes H$ and
$H\cong G/\mathbf{F}(G)$.

As $\mathbf{F}(G)$ is abelian, all its irreducible characters are linear.
Under pointwise multiplication, $V:=\Irr(\mathbf{F}(G))$ is its character
group. Conjugation by $H$ makes $V$ the dual of the $H$-module
$\mathbf{F}(G)$. By a theorem of Gasch\"utz~\cite[Satz~7]{Gas53}, $\Phi(G)=1$ implies that $\mathbf{F}(G)$ is completely reducible under conjugation by $G$. Since $\mathbf{F}(G)$ is abelian, it acts trivially on itself by conjugation. Thus this action induces an action of $G/\mathbf{F}(G)$ on $\mathbf{F}(G)$. It follows that $\mathbf{F}(G)$ is a completely reducible
$G/\mathbf{F}(G)$-module and hence a completely reducible $H$-module. This $H$-action is faithful because
$\mathbf{C}_G(\mathbf{F}(G))=\mathbf{F}(G)$ and
$H\cap\mathbf{F}(G)=1$. By \cite[Proposition~12.1]{ManzWolf93}, duality preserves faithfulness and irreducibility. Applying this to $\mathbf{F}(G)$ and its irreducible summands shows that $V$ is a faithful and completely reducible $H$-module. The hypothesis of the lemma
therefore gives an abundant character $\lambda\in V$ with
\begin{equation}\label{pn-stabiliser-ineq}
    |H:H_\lambda|\ge \nu(H_\lambda).
\end{equation}

Every element of
$\mathrm I_G(\lambda)=\mathbf{F}(G)\rtimes H_\lambda$
has a unique expression $fh$, where $f\in\mathbf{F}(G)$ and
$h\in H_\lambda$. Define
$$
\widehat{\lambda}\colon \mathrm I_G(\lambda)\longrightarrow\mathbb C^\times,
\qquad
\widehat{\lambda}(fh):=\lambda(f).
$$
The uniqueness of this expression makes $\widehat{\lambda}$ well defined.
For $f_1,f_2\in\mathbf{F}(G)$ and $h_1,h_2\in H_\lambda$, we have
$\lambda(h_1f_2h_1^{-1})=\lambda(f_2)$.
It follows from this equality and the linearity of $\lambda$ that the value of
$\widehat{\lambda}((f_1h_1)(f_2h_2))$ is
$$
\widehat{\lambda}((f_1h_1f_2h_1^{-1})(h_1h_2))
 =\lambda(f_1(h_1f_2h_1^{-1}))
 =\lambda(f_1)\lambda(f_2)
 =\widehat{\lambda}(f_1h_1)\widehat{\lambda}(f_2h_2).
$$
Thus $\widehat{\lambda}$ is a linear character of
$\mathrm I_G(\lambda)$ extending $\lambda$. Let $\eta\in\Irr(H_\lambda)$, and inflate $\eta$ to
$\mathrm I_G(\lambda)$ with $\mathbf{F}(G)$ in its kernel. By Gallagher's theorem \cite[Corollary~6.17]{Isa76}, $\widehat{\lambda}\eta$ is irreducible. Its induction to $G$ is irreducible by the
Clifford correspondence \cite[Theorem~6.11]{Isa76}, and has degree
$$
 |G:{\rm I}_G(\lambda)|\,(\widehat{\lambda}\eta)(1)=|G:{\rm I}_G(\lambda)|\,\eta(1)=|H:H_\lambda|\eta(1).
$$
Taking $\eta$ of largest degree gives
\begin{equation}\label{lambda-degree-bound}
    b(G)\ge |H:H_\lambda|b(H_\lambda).
\end{equation}

Since $G$ is a nontrivial solvable group, $\mathbf{F}(G)\neq1$, and hence
$|H_\lambda|\le|H|=|G:\mathbf{F}(G)|<|G|$. The group $H_\lambda$ is solvable,
so the minimality of $G$ gives
$b(H_\lambda)^2\ge |H_\lambda:\mathbf{F}(H_\lambda)|$. Squaring
\eqref{lambda-degree-bound} and using this gives
$$b(G)^2
\ge |H:H_\lambda|^2\,b(H_\lambda)^2
\ge |H:H_\lambda|^2\,|H_\lambda:\mathbf{F}(H_\lambda)|
=\frac{|H:H_\lambda|}{|\mathbf{F}(H_\lambda)|}\,|H|.
$$
Since $\mathbf{F}(H_\lambda)$ is nilpotent,
$|\mathbf{F}(H_\lambda)|\le\nu(H_\lambda)$, while
\eqref{pn-stabiliser-ineq} gives
$|H:H_\lambda|\ge\nu(H_\lambda)\ge|\mathbf{F}(H_\lambda)|$. Hence
$b(G)^2\ge |H|=|G:\mathbf{F}(G)|$, contradicting the choice of $G$.
\end{proof}

The reduction to irreducible modules is by induction, removing one irreducible
summand at each step. The following lemma is the inductive step. The key point is that the
ratio for the sum is bounded below by the product of the two ratios obtained
from the summands.

\begin{lemma}\label{lem:direct-sum}
Let $H$ act faithfully and completely reducibly on $V=W\oplus U$, where $W$
and $U$ are nonzero $H$-submodules. Set $A=\mathbf{C}_H(W)$ and
$\overline{H}=H/A$. If $W$ contains an abundant vector for $\overline{H}$ and
$U$ contains an abundant vector for $A$, then $V$ contains an abundant vector
for $H$.
\end{lemma}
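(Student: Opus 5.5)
Take $w\in W$ abundant for $\overline{H}$ and $u\in U$ abundant for $A$, and show that $v=w+u$ is abundant for $H$. Because $W$ and $U$ are $H$-submodules and the sum is direct, $H_v=H_w\cap H_u$. The subgroup $A$ acts trivially on $W$, so $A\le H_w$ and $\overline{H}_w=H_w/A$. Moreover $A_u=A\cap H_u$, and $H_v\cap A=A_u$.

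The idea is to split the index and the nilpotent order along the normal series $H_v\cap A\trianglelefteq H_v$. For the index, write
$$|H:H_v|=|H:H_w|\,|H_w:H_w\cap H_u|=|\overline H:\overline H_w|\,|H_w:H_v|,$$
and observe that $|H_w:H_v|\ge |AH_v:H_v|=|A:A\cap H_v|=|A:A_u|$, since $A\le H_w$.

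For the nilpotent order, use the standard inequality $\nu(X)\le\nu(X/N)\,\nu(N)$ for $N\trianglelefteq X$. To prove it, take a nilpotent $Q\le X$; then $QN/N\cong Q/(Q\cap N)$ is nilpotent and $Q\cap N$ is a nilpotent subgroup of $N$. Apply this with $X=H_v$ and $N=H_v\cap A=A_u$. The quotient $H_v/A_u\cong H_vA/A$ is a subgroup of $H_w/A=\overline H_w$, so $\nu(H_v)\le \nu(\overline H_w)\,\nu(A_u)$.

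Combining the two estimates gives
$$\rho_H(v)=\frac{|H:H_v|}{\nu(H_v)}\ge \frac{|\overline H:\overline H_w|}{\nu(\overline H_w)}\cdot\frac{|A:A_u|}{\nu(A_u)}=\rho_{\overline H}(w)\,\rho_A(u)\ge1.$$

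No step is a serious obstacle. The main care point is the index estimate: the inequality $|H_w:H_v|\ge|A:A_u|$ must be taken in the correct direction, and it relies on $A\le H_w$. The submultiplicativity of $\nu$ across a normal subgroup is also essential, and its proof is the short argument above. Faithfulness of $\overline H$ on $W$ and of $A$ on $U$ is not needed for this step itself. It matters only so that the hypotheses can be supplied inductively, since $A$ acts faithfully on $U$ because $H$ is faithful on $V$, and complete reducibility passes to $A$ by Clifford's theorem.
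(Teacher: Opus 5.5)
Your proposal is correct and is essentially the paper's proof: both use $H_v=H_w\cap H_u$, split the index across $A$, and bound $\nu(H_v)$ by $\nu(\overline{H}_w)\,\nu(A_u)$ via the image of a nilpotent subgroup in $\overline{H}_w$ and its intersection with $A$. The only difference is cosmetic. You factor the index as $|H:H_w|\,|H_w:H_v|$ and estimate the second factor, whereas the paper factors it as $|H:H_vA|\,|H_vA:H_v|$ and estimates the first.
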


\begin{proof}
The quotient $\overline{H}$ acts faithfully on $W$ by
the definition of $A$, and completely reducibly because $W$ is an $H$-summand. The
subgroup $A$ is normal in $H$. Applying Clifford's theorem
\cite[Theorem~0.1]{ManzWolf93} to each irreducible $H$-summand of $V$ shows that
the restriction of $V$ to $A$ is completely reducible, and hence so is its summand
$U$. The action of $A$ on $U$ is faithful: if an element of $A$ acts trivially
on $U$, then it acts trivially on both $W$ and $U$, and hence on $V$.

Fix $z=w+u$ with $w\in W$ and $u\in U$. Since the summands are $H$-invariant,
$H_z=H_w\cap H_u$. Since $A\le H_w$, we have
$H_zA/A\le(\overline H)_w$ and
$A\cap H_z=A_u$, whence
\begin{equation}\label{eq:ds-index}
  |H:H_z|=|H:H_zA|\,|H_zA:H_z|\ge |\overline{H}:(\overline H)_w|\,|A:A_u|.
\end{equation}
If $N\le H_z$ is nilpotent, then both its image
$NA/A\le(\overline H)_w$ and its intersection
$N\cap A\le A_u$ are nilpotent. Moreover,
$|N|=|NA/A|\,|N\cap A|$. Therefore
$|N|\le\nu((\overline H)_w)\,\nu(A_u)$, and maximising over $N$ yields
\begin{equation}\label{eq:ds-nu}
  \nu(H_z)\le \nu((\overline H)_w)\,\nu(A_u).
\end{equation}
Now choose $w\in W$ with $\rho_{\overline{H}}(w)\ge1$ and $u\in U$ with $\rho_A(u)\ge1$,
and set $z=w+u$. Combining \eqref{eq:ds-index} and~\eqref{eq:ds-nu} gives
$\rho_H(z)\ge\rho_{\overline{H}}(w)\,\rho_A(u)\ge1$, so $z$ is abundant.
\end{proof}

\begin{corollary}\label{cor:reduce-irreducible}
If Theorem~\ref{thm:abundance-intro} fails, it fails for some faithful
irreducible $\mathbb{F}_pH$-module.
\end{corollary}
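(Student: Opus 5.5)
We argue by minimal counterexample, using Lemma~\ref{lem:direct-sum} to split off one irreducible summand at a time. Suppose Theorem~\ref{thm:abundance-intro} fails. Among all counterexamples, choose a pair $(H,V)$ with $|H|+|V|$ minimal; here $H$ is solvable and $V$ is a faithful completely reducible $H$-module with no abundant vector.

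The first step is to rule out a decomposable $V$. Suppose $V=W\oplus U$ with $W$ and $U$ nonzero $H$-submodules; such a decomposition exists as soon as $V$ is reducible, by complete reducibility. Set $A=\mathbf{C}_H(W)$ and $\overline H=H/A$. By the first paragraph of the proof of Lemma~\ref{lem:direct-sum}:
\begin{itemize}
\item $W$ is a faithful completely reducible $\overline H$-module;
\item $U$ is a faithful completely reducible $A$-module.
\end{itemize}
Both $\overline H$ and $A$ are solvable, and both pairs $(\overline H,W)$ and $(A,U)$ have strictly smaller total size, since $|W|,|U|<|V|$ and $|\overline H|,|A|\le|H|$. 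By minimality, $W$ contains an abundant vector for $\overline H$ and $U$ contains an abundant vector for $A$. Lemma~\ref{lem:direct-sum} then gives an abundant vector in $V$, a contradiction. Hence $V$ is irreducible.

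The second step identifies the field. By Definition~\ref{def:cr}, $V=\bigoplus_p O_p(V)$, and each $O_p(V)$ is an $H$-submodule. Irreducibility therefore forces $V=O_p(V)$ for a single prime $p$. That summand is elementary abelian and completely reducible over $\mathbb{F}_p$, so $V$ is a faithful irreducible $\mathbb{F}_pH$-module, as claimed.

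There are two small points to check. The first is the edge case where $U$ or $W$ is zero; this is excluded because we only split when $V$ is reducible. The second is the case $V=0$: this cannot occur, since then $H=1$ by faithfulness, and the zero vector is abundant because $\rho=1/1=1$. The trivial group acting on any module also needs no separate treatment, as every vector is abundant.

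The main subtlety is making sure the induction measure genuinely decreases. This is why we minimise $|H|+|V|$ rather than $|H|$ alone: $A$ may equal $H$ when $W$ is a trivial module, and $\overline H$ may equal $H$, but in every case the module strictly shrinks.
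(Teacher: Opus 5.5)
Your proposal is correct and follows essentially the paper's argument: take a minimal counterexample, split $V=W\oplus U$ into nonzero summands, and use the first paragraph of the proof of Lemma~\ref{lem:direct-sum} to obtain faithful completely reducible actions of $H/\mathbf{C}_H(W)$ on $W$ and of $\mathbf{C}_H(W)$ on $U$. Minimality and that lemma then force $V$ to be irreducible, so $V=O_p(V)$ for a single prime $p$. The only difference is cosmetic: the paper minimises $|V|$ alone, which already suffices because the module strictly shrinks, so your measure $|H|+|V|$ is harmless but unnecessary.
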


\begin{proof}
The zero module is not a counterexample: if it is faithful, then $H=1$, and
its only vector is abundant. Take a counterexample with $|V|$ minimal. By
complete reducibility, $V$ is a direct sum
of irreducible elementary abelian submodules. If there is more than one summand,
write $V=W\oplus U$ with $W$ a single summand and $U$ the sum of the others.
Both $W$ and $U$ are smaller than $V$. As shown in the proof of
Lemma~\ref{lem:direct-sum}, $H/\mathbf{C}_H(W)$ acts faithfully and completely
reducibly on $W$, while $\mathbf{C}_H(W)$ acts faithfully and completely
reducibly on $U$. Minimality therefore gives an abundant vector for each
action, and Lemma~\ref{lem:direct-sum} gives one for $V$, a contradiction.
Hence $V$ is itself irreducible and elementary abelian, that is, a
finite-dimensional vector space over a single prime field.
\end{proof}

It remains to find an abundant vector in a faithful
irreducible $\mathbb{F}_pH$-module. We handle the primitive and imprimitive cases in
Sections~\ref{sec:primitive} and~\ref{sec:imprimitive}, after developing the
tools they need.

\section{Coordinate chains in imprimitive actions}
\label{sec:coordinate}

The estimate in this section will be used in two settings. If
$V=W_1\oplus\cdots\oplus W_m$, then every vector $v\in V$ is uniquely
determined by its components $v_i\in W_i$. If
$\Omega=D_1\mathbin{\dot\cup}\cdots\mathbin{\dot\cup}D_m$, then every subset
$\Delta\subseteq\Omega$ is uniquely determined by the subsets
$\Delta\cap D_i$. Thus in both settings the object whose stabiliser we study
is represented by a tuple, while the group permutes the summands or parts and
acts within them. We now formulate the estimate for abstract sets $X_i$. In
the applications at the end of the section, $X_i$ will be $\{i\}\times W_i$
in the linear case and $\{i\}\times\pow(D_i)$ in the permutation case, where
$\pow(D_i)$ denotes the power set of $D_i$.

Let $\mathcal X=X_1\mathbin{\dot\cup}\cdots\mathbin{\dot\cup}X_m$ be a
finite set partitioned into nonempty parts, let $I=\{1,\ldots,m\}$, and
let $H\leq\Sym(\mathcal X)$ permute the parts transitively. Every $h\in H$
therefore induces a permutation of $I$. We write $j=i^h$ when
$X_i^h=X_j$. We call $i$ a {\it coordinate} and $X_i$ its
{\it coordinate set}.

The group $H$ also acts on the Cartesian product $\prod_{i\in I}X_i$. If
$x=(x_i)_{i\in I}\in\prod_{i\in I}X_i$, we call $x$ a {\it coordinate
tuple}. For $h\in H$ and $j=i^h$, the same element $h$ sends
$x_i\in X_i$ to $x_i^h\in X_j$, and its action on coordinate tuples is
defined by $(x^h)_j=x_i^h$.
The disjoint union $\mathcal X$ carries the given action of $H$, whereas the
object whose stabiliser we study is the tuple $x\in\prod_iX_i$.

Here is a small example of the two actions. Take
$$
  X_1=\{0_1,1_1,2_1\},\qquad X_2=\{0_2,1_2,2_2\},
$$
and let $H=\langle d,\tau\rangle$, where
$$
  d=(1_1\ 2_1)(1_2\ 2_2),\qquad
  \tau=(0_1\ 0_2)(1_1\ 1_2)(2_1\ 2_2).
$$
Thus $d$ acts inside the two coordinate sets and $\tau$ interchanges them.
The disjoint union has six points, whereas $X_1\times X_2$ has nine tuples.
For instance,
$$
  (0_1,1_2)^d=(0_1^d,1_2^d)=(0_1,2_2),\qquad
  (0_1,1_2)^\tau=(1_2^\tau,0_1^\tau)=(1_1,0_2).
$$
In the calculation with $\tau$, the entries of the image tuple are written in
the order $X_1,X_2$: the entry $1_2$ is sent into $X_1$, while $0_1$ is sent
into $X_2$.

Let $B$ be the kernel of the action of $H$ on $I$, and write
$P=H/B\leq\Sym(I)$ for the permutation group induced by $H$ on $I$.
Thus $B$ consists of the elements of $H$ that stabilise every coordinate
set setwise. We call $P$ the {\it top group}.

Let $M_i$ be the setwise stabiliser of $X_i$ in $H$, and write
$$
  r_i:M_i\longrightarrow\Sym(X_i),\qquad r_i(h)=h|_{X_i}
$$
for the restriction homomorphism. Define
$$
  J_i=r_i(M_i),\qquad K_i=r_i(B).
$$
We call $J_i$ the {\it local group} on $X_i$. Since $B\trianglelefteq H$, we
have $K_i\trianglelefteq J_i$. Although $H$ is transitive on the family
$\{X_1,\ldots,X_m\}$, the group $J_i$ need not be transitive on $X_i$.

Choose $t_i\in H$ with $X_1^{t_i}=X_i$, where $t_1=1$. The restriction of
$t_i$ is a bijection from $X_1$ to $X_i$. These bijections identify all the
coordinate sets with $X_1$. They also identify the setwise stabiliser in
$\Sym(\mathcal X)$ of the partition $\{X_1,\ldots,X_m\}$ with the wreath
product $\Sym(X_1)\wr S_m$ in its usual imprimitive action on $\mathcal X$.
Hence $H$ is a subgroup of this wreath product. The
subgroup $\Sym(X_1)^m$ consists of the permutations that stabilise each
coordinate set setwise, and hence $B=H\cap\Sym(X_1)^m$.
This does not mean that the restrictions of elements of $B$ to the individual
coordinate sets can be chosen independently. In the example, the two
transpositions in $d$ always occur together.

In the example above,
$$
  B=\langle d\rangle,\qquad P=\langle\tau B\rangle\cong C_2,
  \qquad J_1=\langle(1_1\ 2_1)\rangle,
$$
and the $H$-orbits on $\mathcal X$ are
$$
  \mathcal O_0=\{0_1,0_2\}\qquad\text{and}\qquad
  \mathcal O_1=\{1_1,2_1,1_2,2_2\}.
$$
For a coordinate tuple $x=(x_i)_{i\in I}$, define a map $c_x$ on $I$ by
$c_x(i)=x_i^H$. Thus $c_x(i)$ is the $H$-orbit on $\mathcal X$ containing
$x_i$. We call this orbit the {\it colour} of coordinate $i$ in $x$. For a
related use of inequivalent local colourings as colours in an imprimitive action, see
\cite[Section~2.3]{SabatiniStabilizers}. For
$x=(0_1,1_2)$, we have $c_x(1)=\mathcal O_0$ and
$c_x(2)=\mathcal O_1$, so the two coordinates have different colours. For
$y=(1_1,2_2)$, both coordinates have colour $\mathcal O_1$.

The colour may also be read from the local action on $X_1$. The chosen
element $t_i$ identifies $X_i$ with $X_1$ by sending $u\in X_i$ to
$u^{t_i^{-1}}$. Moreover,
$$
  x_i^H\cap X_1=\bigl(x_i^{t_i^{-1}}\bigr)^{J_1}.
$$
Indeed, an element of $H$ carries $X_i$ to $X_1$ precisely when it has the
form $t_i^{-1}m$ for some $m\in M_1$. In particular, the $J_1$-orbit on the
right is independent of the choice of $t_i$. Every $H$-orbit on
$\mathcal X$ meets $X_1$. Consequently, coordinates $i$ and $j$ have the
same colour precisely when
$$
  c_x(i)=c_x(j)
  \quad\Longleftrightarrow\quad
  \bigl(x_i^{t_i^{-1}}\bigr)^{J_1}
  =\bigl(x_j^{t_j^{-1}}\bigr)^{J_1}.
$$
In the example, take $t_2=\tau$. Then $0_1$ and
$1_2^{t_2^{-1}}=1_1$ belong to the distinct
$J_1$-orbits $\{0_1\}$ and $\{1_1,2_1\}$.

Since $P$ acts on $I$, define the stabiliser in $P$ of this colouring by
$$
  T_x=\{g\in P:c_x(i^g)=c_x(i)\text{ for every }i\in I\}.
$$
An element of $P$ lies in $T_x$ precisely when it moves each index only to an
index carrying the same colour. We call $|P:T_x|/\nu(T_x)$ the {\it top
factor} associated with $x$. In the example, the nonidentity element of $P$
interchanges the two indices. Consequently $T_x=1$ for $x=(0_1,1_2)$, while
$T_y=P$ for $y=(1_1,2_2)$.

To define the local factors, fix the order $1,\ldots,m$ of the coordinate
sets and define
$$
  B_0=B,\qquad
  B_i=B_{i-1}\cap\ker r_i,\qquad
  L_i=r_i(B_{i-1})\qquad(1\leq i\leq m).
$$
Thus $B_i$ is the pointwise stabiliser in $B$ of
$X_1\cup\cdots\cup X_i$, while $L_i$ is the action on $X_i$ induced by the elements
of $B$ that act trivially on $X_1,\ldots,X_{i-1}$. Since $B\leq M_i$ and
$\ker r_i\trianglelefteq M_i$, induction gives $B_i\trianglelefteq B$. Hence
$$
  L_i\trianglelefteq K_i\trianglelefteq J_i.
$$
We call this the {\it coordinate chain} at $i$, and $L_i$ its
{\it bottom group}. Only $L_i$ occurs in the inequality below. We retain
$K_i$ and $J_i$ because the later arguments and finite calculations consider
every chain $L\trianglelefteq K\trianglelefteq J$. The groups $B_i$ and
$L_i$ may depend on the chosen order. For every order, however,
$$
  \prod_{i=1}^m|L_i|=|B|.
$$
Indeed, $|L_i|=|B_{i-1}:B_i|$, and the faithful action of $H$ on
$\mathcal X$ gives $B_m=1$.

The same example shows why these chains are needed. The element $d$ acts
nontrivially on both $X_1$ and $X_2$, but its action on either set determines
its action on the other. With the order $1,2$, we have
$$
  B_0=B,\qquad B_1=1,\qquad L_1\cong C_2,\qquad L_2=1.
$$
After the action on $X_1$ has been taken into account, no nonidentity element
of $B$ remains to act on $X_2$. For comparison, if
$$
  B=\langle(1_1\ 2_1),(1_2\ 2_2)\rangle,
$$
then the actions on $X_1$ and $X_2$ are independent, and
$L_1\cong L_2\cong C_2$.

We shall also use the following elementary inequality. If
$S\leq M\leq L$, then $\nu(S)\leq\nu(M)$ and
\begin{equation}\label{eq:top-monotone}
  \frac{|L:S|}{\nu(S)}
  \ \geq\
  \frac{|L:M|}{\nu(M)}.
\end{equation}

\begin{theorem}[Coordinate chain inequality]\label{thm:coordinate}
For the coordinate chains $L_i\trianglelefteq K_i\trianglelefteq J_i$
associated with the chosen order, every coordinate tuple
$x=(x_i)_{i\in I}$ satisfies
\begin{equation}\label{eq:coord-clean}
  \rho_H(x)\ \geq\
  \frac{|P:T_x|}{\nu(T_x)}
  \prod_{i=1}^m\rho_{L_i}(x_i).
\end{equation}
\end{theorem}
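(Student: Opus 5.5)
We have to prove $\rho_H(x)\ge \frac{|P:T_x|}{\nu(T_x)}\prod_i \rho_{L_i}(x_i)$. The plan is to split the stabiliser $H_x$ along the normal subgroup $B$, in the same way as the proof of Lemma~\ref{lem:direct-sum}, and then split $B_x$ further along the chain $B=B_0\trianglerighteq B_1\trianglerighteq\cdots\trianglerighteq B_m=1$. The two ingredients are an index estimate and a matching estimate for $\nu$; dividing the first by the second gives the claim.

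\emph{Top part.} If $h\in H_x$, then $h$ maps the entry $x_i$ to the entry $x_{i^h}$, so $x_i^H=x_{i^h}^H$. Hence the image $H_xB/B$ lies in $T_x$. This gives
$$|H:H_x|=|H:H_xB|\,|H_xB:H_x|\ge |P:T_x|\,|B:B_x|,$$
where $B_x=B\cap H_x$. For a nilpotent $N\le H_x$ we have $|N|=|NB/B|\,|N\cap B|$, and $NB/B\le T_x$ is nilpotent. Therefore $\nu(H_x)\le\nu(T_x)\,\nu(B_x)$. If instead we only had $H_xB/B\le P$, inequality \eqref{eq:top-monotone} would still let us replace the true image by $T_x$, since we need $|P:S|/\nu(S)\ge|P:T_x|/\nu(T_x)$ for $S\le T_x$.

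\emph{Bottom part.} Set $C_i=B_x\cap B_i$, the elements of $B$ that fix $x$ and act trivially on $X_1,\dots,X_i$. An element of $B$ fixes $x$ exactly when it fixes each entry $x_i\in X_i$. So $C_{i-1}$ is the set of elements of $B_{i-1}$ fixing $x_i,\dots,x_m$, and $r_i(C_{i-1})\le (L_i)_{x_i}$, with kernel $C_{i-1}\cap\ker r_i=C_i$. We then telescope:
$$|B:B_x|=\prod_{i=1}^m\frac{|B_{i-1}|/|B_i|}{|C_{i-1}|/|C_i|}=\prod_i\frac{|L_i|}{|r_i(C_{i-1})|}\ge\prod_i |L_i:(L_i)_{x_i}|.$$
This uses $|B_{i-1}:B_i|=|L_i|$, $|C_{i-1}:C_i|=|r_i(C_{i-1})|$, and $B_m=C_m=1$. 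For $\nu$, a nilpotent $N\le B_x$ gives the chain $N\cap B_i$ whose successive quotients embed in the nilpotent groups $r_i(N\cap B_{i-1})\le(L_i)_{x_i}$. Hence $\nu(B_x)\le\prod_i\nu((L_i)_{x_i})$.

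\emph{Combining.} Dividing the index bound by the $\nu$ bound gives \eqref{eq:coord-clean}.

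The main obstacle is the bookkeeping in the bottom telescoping: $r_i(C_{i-1})$ need not equal $(L_i)_{x_i}$, and one has to confirm that each inequality goes in the right direction. For the index, a smaller image only enlarges $|L_i|/|r_i(C_{i-1})|$, which helps. For $\nu$, the image is bounded by $\nu((L_i)_{x_i})$ by monotonicity. This is the same mechanism as \eqref{eq:top-monotone}, applied coordinatewise.
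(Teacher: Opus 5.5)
Your proposal is correct and follows the paper's proof. You split $H_x$ along $B$ with image in $T_x$, then split $B_x$ along the chain $B_i$, bounding the index and $\nu$ separately; your $r_i(C_{i-1})$ is the paper's $S_i$. The differences are only cosmetic: you telescope the orders of the parallel chains $B_{i-1}\geq B_i$ and $C_{i-1}\geq C_i$ directly rather than using $B_xB_{i-1}\geq B_xB_i$ with Dedekind's law, and you bound numerator and denominator by $T_x$ and $(L_i)_{x_i}$ at once instead of passing through $R$, $S_i$ and \eqref{eq:top-monotone}.
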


\begin{proof}
We first separate the action of the stabiliser on the coordinate sets from its
intersection with $B$. Define
$$
  B_x=B\cap H_x,
  \qquad
  R=H_xB/B\leq P.
$$
Then $|H:H_x|=|P:R|\,|B:B_x|$.
If $N\leq H_x$ is nilpotent, then the homomorphism from $N$ to $R$ has
kernel $N\cap B=N\cap B_x$. Its image and kernel are nilpotent, and hence
$|N|\leq\nu(R)\nu(B_x)$. Therefore
\begin{equation}\label{eq:top-base-split}
  \rho_H(x)\ \geq\
  \frac{|P:R|}{\nu(R)}
  \frac{|B:B_x|}{\nu(B_x)}.
\end{equation}

We next bound the top contribution in~\eqref{eq:top-base-split}. Suppose
$h\in H_x$ carries $X_i$ to $X_j$. Since $x^h=x$, the
element $h$ also carries $x_i$ to $x_j$. These two entries therefore have
the same colour. It follows that the image $R$ of $H_x$ in $P$ preserves
the map $c_x$, so $R\leq T_x$. Applying~\eqref{eq:top-monotone} gives
\begin{equation}\label{eq:top-bound}
  \frac{|P:R|}{\nu(R)}
  \ \geq\
  \frac{|P:T_x|}{\nu(T_x)}.
\end{equation}

It remains to bound the contribution from $B$. Recall that
$B_x=B\cap H_x$. If $b\in B_x$, then $i^b=i$ because $b\in B$, while
$x^b=x$ because $b\in H_x$. Hence $x_i^b=(x^b)_i=x_i$ for every $i$.
Thus $B_x$ fixes every entry of $x$. This need not be true of the whole
stabiliser $H_x$: in the example above,
$\tau\in H_{(0_1,0_2)}\setminus B$ fixes the tuple $(0_1,0_2)$, although
$0_1^\tau=0_2$ and $0_2^\tau=0_1$.

For $1\leq i\leq m$, define $S_i=r_i(B_x\cap B_{i-1})$.
Since $B_x\cap B_{i-1}\leq B_{i-1}$, we have $S_i\leq L_i$. If $s\in S_i$,
then $s=r_i(b)$ for some $b\in B_x\cap B_{i-1}$, and the preceding paragraph
gives $x_i^s=x_i^b=x_i$. Hence $S_i\leq(L_i)_{x_i}$. This inclusion may be
strict: an element of $L_i$ fixing $x_i$ need not have a preimage in
$B_{i-1}$ that fixes $x_{i+1},\ldots,x_m$.

Recall that $B_m=1$. The normality of
the groups $B_i$ in $B$ gives a chain
$$
  B=B_xB_0\geq B_xB_1\geq\cdots\geq B_xB_m=B_x.
$$
Multiplying the successive indices in this chain gives
$$
  |B:B_x|=
  \prod_{i=1}^m|B_xB_{i-1}:B_xB_i|.
$$
For each $i$, the product formula gives the first
equality below. Since $B_i\leq B_{i-1}$, Dedekind's modular law gives the
second:
$$
  |B_xB_{i-1}:B_xB_i|
  =|B_{i-1}:B_{i-1}\cap(B_xB_i)|
  =|B_{i-1}:(B_x\cap B_{i-1})B_i|.
$$
By definition, $r_i(B_{i-1})=L_i$, and the kernel of $r_i$ on $B_{i-1}$ is
$B_{i-1}\cap\ker r_i=B_i$. Since $r_i(B_i)=1$, the restriction
$\left.r_i\right|_{B_{i-1}}$ maps $(B_x\cap B_{i-1})B_i$ onto $S_i$. A
surjective homomorphism preserves the index of a subgroup containing its
kernel, so
$|B_{i-1}:(B_x\cap B_{i-1})B_i|=|L_i:S_i|$.
Multiplying these equalities over all $i$ gives
\begin{equation}\label{eq:base-index}
  |B:B_x|=\prod_{i=1}^m|L_i:S_i|.
\end{equation}

Let $N\leq B_x$ be nilpotent. Since $B_i\trianglelefteq B$, the groups
$N\cap B_i$ form a normal series from $N$ to $1$.
For each $i$, the kernel of
$\left.r_i\right|_{N\cap B_{i-1}}$ is $N\cap B_i$. The first isomorphism
theorem therefore gives
$$
  \frac{N\cap B_{i-1}}{N\cap B_i}
  \cong r_i(N\cap B_{i-1})\leq S_i.
$$
The image is nilpotent, so each factor has order at most $\nu(S_i)$, and hence
$$
  |N|
  \leq\prod_{i=1}^m\nu(S_i).
$$
Maximising over all nilpotent subgroups $N$ of $B_x$ yields
\begin{equation}\label{eq:base-nu}
  \nu(B_x)\leq\prod_{i=1}^m\nu(S_i).
\end{equation}
Combining~\eqref{eq:base-index} and~\eqref{eq:base-nu}, and then applying
\eqref{eq:top-monotone} with
$S=S_i$, $M=(L_i)_{x_i}$ and $L=L_i$, gives
\begin{equation}\label{eq:base-bound}
  \frac{|B:B_x|}{\nu(B_x)}
  \ \geq\
  \prod_{i=1}^m\frac{|L_i:S_i|}{\nu(S_i)}
  \ \geq\
  \prod_{i=1}^m\rho_{L_i}(x_i).
\end{equation}
The theorem now follows from
\eqref{eq:top-base-split},~\eqref{eq:top-bound} and~\eqref{eq:base-bound}.
\end{proof}

We now apply Theorem~\ref{thm:coordinate} in the two settings described at
the beginning of the section. The tags below make the coordinate sets
disjoint.

For the linear application, suppose that $H$ acts faithfully and
imprimitively on $V=W_1\oplus\cdots\oplus W_m$, permuting the summands
transitively. The
summands are not disjoint as sets because they all contain the zero vector.
We therefore use the tagged copies $X_i=\{i\}\times W_i$. The tag $i$ serves
only to distinguish these copies. The action on them is
$(i,v)^h=(i^h,v^h)$.
The map
$$
  \prod_{i\in I}X_i\longrightarrow V,
  \qquad
  \bigl((i,v_i)\bigr)_{i\in I}\longmapsto
  \sum_{i\in I}v_i
$$
is a bijection because the sum is direct. If
$x=\bigl((i,v_i)\bigr)_{i\in I}$ and $j=i^h$, then the $j$th entry of $x^h$
is $(j,v_i^h)$. Hence the vector corresponding to $x^h$ is
$$
  \sum_{i\in I}v_i^h=\left(\sum_{i\in I}v_i\right)^h,
$$
so the bijection is $H$-equivariant. Thus the coordinate tuple corresponds to the
coordinate decomposition of a vector, and the stabiliser of the tuple is the
stabiliser of that vector. The tagged action is faithful, and we identify the
local permutation group on $X_i$ with the corresponding group induced on
$W_i$.

The action on $\mathcal X$ is not transitive: the tagged zero vectors form
one orbit, while the nonzero vectors need not form a single orbit. Since
Theorem~\ref{thm:top-hni} concerns transitive actions, it cannot be applied
directly to $\mathcal X$.

Nor would an abundant subset $\Delta\subseteq\mathcal X$ by itself give an
abundant vector. Such a subset may contain several elements of one coordinate
set $X_i$ and none of another, whereas a coordinate tuple contains exactly
one element of each $X_i$. The map
$$
  \pow(\mathcal X)\longrightarrow V,\qquad
  \Delta\longmapsto\sum_{(i,w)\in\Delta}w
$$
is $H$-equivariant. If this map sends $\Delta$ to $v$, then
$H_\Delta\leq H_v$, and this inclusion may be strict. It follows
from~\eqref{eq:top-monotone} that $\rho_H(\Delta)\geq\rho_H(v)$. Thus the
abundance of $\Delta$ does not imply that $v$ is abundant.

Section~\ref{sec:imprimitive} proves the imprimitive linear case by applying
Theorem~\ref{thm:top-hni}, or its weighted strengthening
Theorem~\ref{thm:weighted}, to the top group $P=H/B$ acting on $I$.
Theorem~\ref{thm:coordinate} then combines the resulting top estimate with
the local ratios $\rho_{L_i}(x_i)$ arising from the bottom groups $L_i$.

For the application to permutation groups, suppose that $H$ acts faithfully on
$\Omega=D_1\mathbin{\dot\cup}\cdots\mathbin{\dot\cup}D_m$ and permutes the
sets $D_i$ transitively. Take
$X_i=\{i\}\times\pow(D_i)$ with the natural action. The map
$$
  \prod_{i\in I}X_i\longrightarrow\pow(\Omega),
  \qquad
  \bigl((i,\Delta_i)\bigr)_{i\in I}\longmapsto
  \bigcup_{i\in I}\Delta_i
$$
is a bijection because the sets $D_i$ are disjoint. If
$x=\bigl((i,\Delta_i)\bigr)_{i\in I}$, then $h$ carries $\Delta_i$ to
$\Delta_i^h\subseteq D_{i^h}$, and the subset corresponding to $x^h$ is
$$
  \bigcup_{i\in I}\Delta_i^h
  =\left(\bigcup_{i\in I}\Delta_i\right)^h.
$$
Thus the bijection is $H$-equivariant, and the stabiliser of the coordinate
tuple is the setwise stabiliser of the corresponding subset. For a subset
$\Delta\subseteq\Omega$, the local subset at $i$ is
$\Delta_i=\Delta\cap D_i$. The tagged action is faithful, since its kernel
fixes every singleton of $\Omega$. The groups induced by $M_i$ on
$\pow(D_i)$ and $D_i$ are naturally isomorphic, and we identify them.
The bijections $X_1\longrightarrow X_i$ induced by the elements $t_i$
identify the local groups and their orbits. When a local orbit on $X_1$ is
used on $X_i$, we denote its image by the same symbol.

\section{Abundant subsets of solvable transitive groups}
\label{sec:top-hni}

We now prove the first of the two results about permutation groups promised in the
introduction: a solvable transitive group always has an abundant subset.

The induction uses an {\it unrefinable} block system, by which we mean a
nontrivial block system with no strictly finer nontrivial block system. Its local group is primitive, since a
system of imprimitivity for the local action would transport to a proper
refinement of the original block system. Here the {\it local group} is the permutation group induced on a block by
its setwise stabiliser, and the quotient $\overline{P}$ is the permutation group induced by $P$ on the set of blocks.

\begin{definition}\label{def:chain-label}
Let $D$ be a finite set and let $J\leq\Sym(D)$ be a primitive permutation
group. A $J$-orbit $\orbit$ on the power set $\pow(D)$ is a
{\it chain-abundant orbit} if, for every chain of subgroups
$L\trianglelefteq K\trianglelefteq J$, there exists
$\Delta\in\orbit$ with $\rho_L(\Delta)\geq1$. The same orbit $\orbit$ is
used for every chain, but the representative chosen from it may vary.
\end{definition}

Recall that the degree of a finite solvable primitive permutation group is a prime power. The possible nontrivial degrees at most nine are therefore $2,3,4,5,7,8$ and $9$. We deal with the smallest degrees here and use a short computation for the rest later.

\begin{lemma}\label{lem:deg34}
Let $J$ be a solvable primitive permutation group on a finite set $D$ with
$|D|=3$ or $4$. Then the singletons in $D$ and the subsets of $D$ of size $2$ form two
distinct chain-abundant $J$-orbits on $\pow(D)$.
\end{lemma}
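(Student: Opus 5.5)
The plan is a direct case check. The only solvable primitive groups of degree $3$ are $A_3$ and $S_3$, and those of degree $4$ are $A_4$ and $S_4$; primitivity of degree $4$ forces $J\geq V_4$ with $J/V_4$ acting transitively on the three nonidentity elements. Fix one of these $J$.

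First I show that the singletons and the $2$-subsets really are two distinct $J$-orbits on $\pow(D)$. Each $J$ is transitive on $D$, so the singletons form one orbit. For $|D|=3$ the $2$-subsets are the complements of points, so they also form one orbit. For $|D|=4$ the stabiliser in $A_4$ of $\{1,2\}$ is $\langle(1\,2)(3\,4)\rangle$, giving an orbit of length $12/2=6$, which is all the $2$-subsets; the same then holds for $S_4\geq A_4$. The two orbits are distinct because their members have different sizes.

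Next I list every subgroup $L$ that can occur as the bottom of a chain $L\trianglelefteq K\trianglelefteq J$; these are exactly the subnormal subgroups of defect at most two. For $J=A_3,S_3$ they are $1$, $A_3$ and $S_3$. For $J=A_4,S_4$ they are $1$, $V_4$, $A_4$, $S_4$ (when $J=S_4$), and the order-$2$ subgroups $\langle(a\,b)(c\,d)\rangle$ of $V_4$, which arise via $K=V_4$. The case $L=1$ is trivial, since then $\rho_L(\Delta)=1$. In every other case I will exhibit a representative $\Delta$ from each orbit with $|L:L_\Delta|\geq\nu(L_\Delta)$. It suffices to compute $|L_\Delta|$ and observe $|L:L_\Delta|\geq|L_\Delta|\geq\nu(L_\Delta)$, except for $S_4$ on points, where I use $\nu(S_3)=3$.

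For singletons:
\begin{itemize}
\item $A_3$ has trivial point stabilisers, giving ratio $3$.
\item $S_3$ has index $3$ and stabiliser of order $2$.
\item $V_4$ and $\langle(1\,2)(3\,4)\rangle$ act semiregularly, giving ratios $4$ and $2$.
\item $A_4$ has index $4$ against a stabiliser of order $3$.
\item $S_4$ has index $4$ against $\nu(S_3)=3$.
\end{itemize}
For $2$-subsets of a $3$-set, complementation reduces to the singleton case. For $2$-subsets of a $4$-set:
\begin{itemize}
\item $V_4$ has stabiliser of $\{1,2\}$ equal to $\langle(1\,2)(3\,4)\rangle$, index $2$.
\item $A_4$ has stabiliser of order $2$ and index $6$.
\item $S_4$ has stabiliser $\langle(1\,2),(3\,4)\rangle$ of order $4$ and index $6$.
\end{itemize}

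The one delicate point is $L=\langle(1\,2)(3\,4)\rangle$ acting on $2$-subsets. Here the representative must be chosen depending on $L$, as Definition~\ref{def:chain-label} allows. The natural choice $\{1,2\}$ is fixed by $L$ and has ratio $1/2$, so it fails. Instead take $\Delta=\{1,3\}$, whose image under $L$ is $\{2,4\}$; then $L_\Delta=1$ and $\rho_L(\Delta)=2$. By conjugacy the same choice works for each of the three order-$2$ subgroups of $V_4$. I expect the main obstacle to be making sure the enumeration of chains is complete, in particular not overlooking these order-$2$ subgroups of $V_4$, rather than any individual computation.
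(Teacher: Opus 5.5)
Your proof is correct and follows the paper's route: a direct check over the possible bottom groups ($1,C_3,S_3$ in degree three; $1,C_2,V_4,A_4,S_4$ in degree four), with ratios matching the paper's table, including the representative $\{1,3\}$ for $L=\langle(1\,2)(3\,4)\rangle$ on $2$-subsets. One small wording point: for $\langle(a\,b)(c\,d)\rangle$ the transported representative is $\{a,c\}$ rather than literally $\{1,3\}$, which $\langle(1\,3)(2\,4)\rangle$ fixes; your appeal to conjugacy already supplies this.
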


\begin{proof}
Fix a chain $L\trianglelefteq K\trianglelefteq J$. If $|D|=3$, then the primitive
groups are $C_3$ and $S_3$, and $L$ is one of $1,C_3,S_3$. For $L=1$ both
ratios are $1$. For $L=C_3$, both a singleton and a subset of size $2$ have
trivial stabiliser, so their abundance ratios are $3$. For $L=S_3$, both
stabilisers have order $2$, so the abundance ratios are $3/2$.

If $|D|=4$, then the primitive groups are $A_4$ and $S_4$, and the possible
bottom groups are
$1,C_2,V_4,A_4,S_4$. For each $L$, the largest ratios
among the singletons and among the subsets of size $2$ are
$$
\renewcommand{\arraystretch}{1.15}
\begin{array}{c|ccccc}
 L & 1 & C_2 & V_4 & A_4 & S_4\\\hline
 \text{singleton} & 1 & 2 & 4 & 4/3 & 4/3\\
 \text{size }2    & 1 & 2 & 1 & 3 & 3/2
\end{array}
$$
Here each maximum is taken over the relevant $J$-orbit. For example, with
$L=\langle(12)(34)\rangle\cong C_2$ the subset $\{1,3\}$ has trivial
stabiliser in $L$ and ratio $2$, whereas $\{1,2\}$ gives only $1/2$. One checks the
entries from the orbit sizes and from the orders of the relevant setwise stabilisers,
$\nu(C_2)=2$, $\nu(C_3)=\nu(S_3)=3$, $\nu(V_4)=\nu(A_4)=4$ and
$\nu(S_4)=8$. Every entry is at least $1$.
\end{proof}

\begin{lemma}\label{lem:deg5789}
Let $J$ be a solvable primitive permutation group on a finite set $D$ with
$|D|\in\{5,7,8,9\}$. Then $J$ has at least two distinct chain-abundant
orbits on $\pow(D)$. The two orbits may be chosen on subsets of sizes $1$ and
$2$, except when $J$ is $\mathrm{A}\Gamma\mathrm{L}(1,9)$ or $\mathrm{AGL}(2,3)$
in degree nine. For these two groups, subsets of sizes $2$ and $3$ suffice.
\end{lemma}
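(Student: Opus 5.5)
We will prove Lemma~\ref{lem:deg5789} by a finite check that mirrors the proof of Lemma~\ref{lem:deg34}, organised so that it stays small. First we list the solvable primitive groups $J$ of each degree. In degrees $5$ and $7$ they are the affine groups $C_p\rtimes C_d\le\mathrm{AGL}(1,p)$ with $d\mid p-1$. In degree $8$ they are $\mathrm{AGL}(1,8)$, $\mathrm{A}\Gamma\mathrm{L}(1,8)$ and $\mathrm{ASL}(3,2)=\mathrm{AGL}(3,2)$; only the first two are solvable, together with $C_2^3\rtimes C_7$. In degree $9$ they are the groups $C_3^2\rtimes Q$, where $Q\le\GL(2,3)$ is irreducible and solvable; since $\GL(2,3)$ is solvable, every irreducible $Q$ occurs. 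In each case $J=V\rtimes J_0$ with $V$ the translation subgroup, which is normal and is the unique minimal normal subgroup.

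Next we reduce the chains to a short list of possible bottom groups $L$. Since $L\trianglelefteq K\trianglelefteq J$, the group $L$ is subnormal in $J$. Hence either $L=1$, or $L$ contains $V$, or $L$ is a nontrivial subnormal subgroup meeting $V$ in a $J_0$-invariant-under-$K$ way. In the degrees $5,7$ the only subnormal subgroups are $1$ and the groups $V\rtimes C_e$. In degrees $8,9$ we compute the subnormal subgroups of $J$ directly, up to $J$-conjugacy. This suffices because, if $\Delta$ works for $L$, then $\Delta^g$ works for $L^g$, and $\Delta^g$ lies in the same $J$-orbit.

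For each pair $(J,L)$ we then verify that some $\Delta$ in the designated $J$-orbit satisfies $\rho_L(\Delta)=|L:L_\Delta|/\nu(L_\Delta)\ge1$.

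\emph{Sizes $1$ and $2$.} For singletons and $L\ge V$, the $L$-action is transitive, so $\rho_L(\{a\})=|D|/\nu(L_a)$. Here $L_a\le J_0$ embeds in $\Gamma\mathrm{L}(1,q)$ or $\GL(2,3)$, and $\nu(L_a)\le|D|$ except when $\nu(L_a)$ is the $2$-part $16$ of $\GL(2,3)$ in degree $9$. That is exactly the $\mathrm{AGL}(2,3)$ exception. For pairs, $J$ is $2$-homogeneous in most cases, and the stabiliser of $\{a,b\}$ is small: in the one-dimensional affine case it has order at most $2\cdot|\text{field automorphisms}|$. For $\mathrm{A}\Gamma\mathrm{L}(1,9)$, singletons give $\nu(L_a)=|\Gamma\mathrm{L}(1,9)|=16>9$, which fails. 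That failure is the source of its exception.

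\emph{Sizes $2$ and $3$.} For the two exceptional groups we instead use pairs and triples. We choose the triple orbit to be a non-collinear triple or a line of $\mathrm{AG}(2,3)$, whichever gives the smaller stabiliser (affine frames have stabiliser of order at most $6$). We check both orbits against all subnormal $L$.

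For small $L$ (for example $L=1$, $L$ of prime order, or $L\le V$), the verification is uniform. We pick a representative in the orbit moved freely by $L$, or note $|L:L_\Delta|\ge\nu(L_\Delta)$ trivially when $L_\Delta=1$. A subtle point is when $L$ is a small nonregular group, such as $C_2$ acting on $D$ with fixed points. There we must pick the representative outside the fixed points, as with $\{1,3\}$ in Lemma~\ref{lem:deg34}.

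The main obstacle is the degree-$9$ case: it has many groups $Q\le\GL(2,3)$ and many subnormal chains, and distinctness of the two orbits must be checked case by case. We expect to do this part by the short machine computation promised in the text. That computation will enumerate, for each solvable primitive $J$ of degree $5,7,8,9$ and each subnormal $L$ (by enumerating all chains $L\trianglelefteq K\trianglelefteq J$), the maximum of $\rho_L$ over each $J$-orbit of $k$-subsets with $k\le3$. We will confirm that this maximum is at least $1$ for the orbits claimed, and we will hand-check the degree $5,7$ rows and the exceptional groups as above.
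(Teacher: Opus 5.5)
Your proposal is correct and takes essentially the paper's route. Both arguments are a finite check over the $16$ solvable primitive groups of degrees $5,7,8,9$ and over every bottom group $L$ of a chain $L\trianglelefteq K\trianglelefteq J$, searching within each designated $J$-orbit for a $\Delta$ with $\rho_L(\Delta)\ge1$, and both explain the two exceptions by the nilpotent subgroup of order $16$ in the point stabiliser. Two small slips do not affect the argument: $C_2^3\rtimes C_7$ is $\mathrm{AGL}(1,8)$ itself, so degree $8$ contributes only two groups, and the two orbits are automatically distinct because their subsets have different sizes.
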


\begin{proof}
Among the $21$ solvable primitive groups of degree at most nine in the
primitive groups library of \GAP~\cite{GAP4}, exactly $16$ have degree in
$\{5,7,8,9\}$. Up to relabelling the elements of $D$, the group $J$ is one
of these $16$ groups. Relabelling does not change whether an orbit is
chain-abundant. The calculation in Appendix~\ref{sec:comp-perm} checks all
$21$ groups and hence, in particular, the $16$ cases relevant here. For each
relevant group $J$, it considers every subgroup $L$ that
can occur as the bottom group of a chain
$L\trianglelefteq K\trianglelefteq J$ and every $J$-orbit $\orbit$ on
$\pow(D)$. It verifies that two orbits of the sizes
stated in the lemma contain, for every such $L$, a subset $\Delta$ satisfying
$\rho_L(\Delta)\geq1$. For the two exceptional groups the point
stabilisers $\GammaL(1,9)$ and $\GL(2,3)$ contain nilpotent subgroups of order
$16$. Thus every singleton $\Delta$ satisfies $\rho_J(\Delta)\le9/16<1$ for the
chain $L=K=J$, so no singleton orbit is chain-abundant. For each of these two
groups, Appendix~\ref{sec:comp-perm} finds two chain-abundant $J$-orbits, one
consisting of subsets of size $2$ and one consisting of subsets of size $3$.
\end{proof}

For larger degrees we use Gluck's theorem on regular subsets in place of any
computation.

\begin{lemma}\label{lem:large}
Let $J$ be a solvable primitive permutation group on a finite set $D$ with
$|D|>9$. Then there exists a regular subset $\Delta_0\subseteq D$ with
$|\Delta_0|\ne|D|/2$. In particular, the $J$-orbits of $\Delta_0$ and
$\Delta_0^c$ are distinct and chain-abundant.
\end{lemma}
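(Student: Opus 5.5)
The plan has three steps: invoke Gluck's theorem to get a regular subset, choose one whose size is not $|D|/2$, and then use regularity to get chain-abundance for every chain at once.

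\emph{Step 1: a regular subset.} I would invoke Gluck's theorem on regular subsets \cite{Glu83}. It says that a solvable primitive permutation group of degree greater than nine has a regular orbit on the power set. Seress \cite{Seress97} pins down the exceptional primitive solvable groups more precisely, and all of them have degree at most $32$ but include some of degree greater than nine. So I would first check which form of the statement is needed. If exceptions of degree greater than nine survive, I would handle them directly using the explicit list, since $\nu$ of their point stabilisers is small. Either way, the first step produces some $\Delta_1\subseteq D$ with $J_{\Delta_1}=1$.

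\emph{Step 2: a size other than $|D|/2$.} If $|\Delta_1|\ne|D|/2$, take $\Delta_0=\Delta_1$. Otherwise $|\Delta_1|=|D|/2$, so $|D|$ is even, and hence $|D|=2^k$ with $k\ge4$, because the degree of a solvable primitive group is a prime power. In this case I would try $\Delta_1\cup\{\alpha\}$ or $\Delta_1\setminus\{\alpha\}$ for a suitable point $\alpha$. Suppose $g$ stabilises $\Delta_1\cup\{\alpha\}$ for every $\alpha\notin\Delta_1$. Then $g$ maps $\Delta_1$ to a set differing from $\Delta_1$ in at most one point, which should be rare. I would count such $g$ against the $|D|/2$ choices of $\alpha$, using that $J$ acts on $D$ with $|J|$ small relative to $2^{|D|}$; standard bounds for solvable primitive groups give $|J|\le |D|^{c}$. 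I expect this to give a regular subset of size $|D|/2\pm1$. Alternatively, I would appeal to the refinement in Seress's work that a regular subset of size other than $|D|/2$ exists. Getting this size adjustment right is the main obstacle, and I would first try to cite the sharper published statement rather than redo the counting argument.

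\emph{Step 3: chain-abundance.} Complementation commutes with $J$, so $\Delta_0^c$ is also regular. Since $|\Delta_0^c|=|D|-|\Delta_0|\ne|\Delta_0|$, the orbits $\Delta_0^J$ and $(\Delta_0^c)^J$ are distinct. For chain-abundance, take any chain $L\trianglelefteq K\trianglelefteq J$. Because $L\le J$, we have $L_{\Delta_0}\le J_{\Delta_0}=1$. Hence
\[
\rho_L(\Delta_0)=\frac{|L|}{\nu(1)}=|L|\ge1,
\]
and the same holds for $\Delta_0^c$. Thus the same representative works for every chain, and both orbits are chain-abundant.
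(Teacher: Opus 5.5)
Your Step~3 is exactly the paper's argument, and Step~1 matches it in substance. The gap is Step~2, which carries the real content of the lemma.

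\textbf{The size condition is not established.} The condition $|\Delta_0|\ne|D|/2$ is load-bearing: it is what makes $\Delta_0^J\ne(\Delta_0^c)^J$, and both the proof of Theorem~\ref{thm:top-hni} and Lemma~\ref{lem:three-label} depend on that. The paper does not prove it separately. It takes it directly from \cite[Theorem~1]{Glu83}, which for a solvable primitive group of degree greater than nine already gives a regular subset $\Delta_0$ with $|\Delta_0|\ne|D|/2$. Gluck needs this form, with $\Delta_0$ and $\Delta_0^c$ as two distinct regular orbits, for his imprimitive theorem.

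\textbf{Your counting substitute does not close.} If $\Delta_1\cup\{\alpha\}$ is not regular, a witness $g\ne1$ satisfies $\Delta_1^g=(\Delta_1\setminus\{\alpha^g\})\cup\{\alpha\}$. So $\alpha$ is determined by $g$, and similarly for $\Delta_1\setminus\{\beta\}$. To find a good point you would therefore need fewer than $|D|/2$ elements of $J$ that move $\Delta_1$ by a single exchange. A bound $|J|\le|D|^c$ gives no control over that number, since for $|D|\ge16$ the bound is far larger than $|D|/2$. Comparing $|J|$ with $2^{|D|}$ helps only when you count over all subsets of a given size, not when you perturb one fixed $\Delta_1$. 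To make this self-contained you would have to redo a Gluck-type count over all subsets of size $|D|/2\pm1$ and check small degrees such as $16$ and $32$ separately. Otherwise you need the citation, and it is Gluck's Theorem~1 that supplies it, not an unspecified refinement in Seress.

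\textbf{Two smaller problems in Step~1.}
\begin{enumerate}
\item The exceptional primitive groups of degree between $10$ and $32$ without a regular subset in \cite{Seress97} are nonsolvable. Gluck's theorem, as the paper uses it, covers every solvable primitive group of degree greater than nine, which is why the lemma assumes $|D|>9$.
\item Your contingency plan could not have worked even if such exceptions existed. Bounding $\nu$ of point stabilisers might yield abundant orbits, but the lemma asserts that a regular subset exists. Lemma~\ref{lem:three-label} uses that regularity, through $\rho_L(\Delta_0)=\rho_L(\Delta_0^c)=|L|$ for every bottom group $L$, and not merely abundance.
\end{enumerate}
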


\begin{proof}
By~\cite[Theorem~1]{Glu83} there is a regular subset $\Delta_0$ with
$|\Delta_0|\ne|D|/2$. Its complement $\Delta_0^{c}$ also has
trivial stabiliser. The subsets $\Delta_0$ and $\Delta_0^c$ have different
sizes and therefore lie in distinct $J$-orbits. For every chain
$L\trianglelefteq K\trianglelefteq J$, both $L_{\Delta_0}$ and
$L_{\Delta_0^c}$ are trivial. Hence
$\rho_L(\Delta_0)=\rho_L(\Delta_0^c)=|L|\ge1$.
\end{proof}

The next observation concerns a coordinate tuple with exactly two colours.

\begin{lemma}\label{lem:fine-label}
Let $x=(x_i)_{i\in I}$ be a coordinate tuple with exactly two colours. Let
$\mathcal O$ be one of these colours and set
$\Gamma=\{i\in I:c_x(i)=\mathcal O\}$. Then $T_x\leq P_\Gamma$.
\end{lemma}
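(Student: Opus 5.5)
The statement follows directly from the definition of $T_x$, so the plan is to unwind that definition. Take $g\in T_x$. By definition, $c_x(i^g)=c_x(i)$ for every $i\in I$. We must show $\Gamma^g=\Gamma$, where $P$ acts on $I$ and $P_\Gamma$ is the setwise stabiliser of $\Gamma\subseteq I$.

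First, for $i\in\Gamma$ we have $c_x(i)=\mathcal O$. Hence $c_x(i^g)=\mathcal O$, that is, $i^g\in\Gamma$, and so $\Gamma^g\subseteq\Gamma$. Since $g$ is a permutation of the finite set $I$, the sets $\Gamma^g$ and $\Gamma$ have the same size, so $\Gamma^g=\Gamma$ and $g\in P_\Gamma$. Alternatively, the same argument applied to $g^{-1}$ gives the reverse inclusion: $T_x$ is a subgroup, since it is the stabiliser of the colouring $c_x$ under the action $(c\cdot g)(i)=c(i^{g})$, so $g^{-1}\in T_x$.

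The hypothesis that there are exactly two colours is not needed for the inclusion itself. It matters only because then $\Gamma$ is a nonempty proper subset of $I$, and its complement is the other colour class; this is presumably how the lemma is used later. No step here is a real obstacle. The only point to state carefully is that $T_x$ is closed under inverses, or equivalently the finiteness argument, so that the inclusion $\Gamma^g\subseteq\Gamma$ upgrades to equality.
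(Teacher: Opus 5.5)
Your proof is correct and is the same as the paper's: take $g\in T_x$, note that $c_x(i^g)=c_x(i)=\mathcal O$ forces $i^g\in\Gamma$ for every $i\in\Gamma$, and conclude $g\in P_\Gamma$. The paper leaves implicit the step you spell out, namely that $\Gamma^g\subseteq\Gamma$ becomes equality because $g$ permutes the finite set $I$.
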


\begin{proof}
Let $g\in T_x$. If $i\in\Gamma$, then
$c_x(i^g)=c_x(i)=\mathcal O$, so $i^g\in\Gamma$. Thus $g$ stabilises
$\Gamma$ setwise and hence belongs to $P_\Gamma$.
\end{proof}

The case where the blocks $D_i$ have size two needs a different argument.

\begin{lemma}\label{lem:c2-balance}
Let $I=\{1,\ldots,m\}$. For each $i\in I$, let $D_i$ be a set of size $2$,
let $L_i\leq\Sym(D_i)$, and choose a singleton $E_i\subseteq D_i$. Fix
$\Gamma\subseteq I$, and define
$$
 \Delta_i=
 \begin{cases}
  E_i,&i\in\Gamma,\\
  \varnothing,&i\notin\Gamma,
 \end{cases}
 \qquad
 \Delta_i'=
 \begin{cases}
  \varnothing,&i\in\Gamma,\\
  E_i,&i\notin\Gamma.
 \end{cases}
$$
Then
$$
 \left(\prod_{i\in I}\rho_{L_i}(\Delta_i)\right)
 \left(\prod_{i\in I}\rho_{L_i}(\Delta_i')\right)=1.
$$
Consequently at least one of the two products is at least $1$.
\end{lemma}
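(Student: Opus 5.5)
The plan is to reduce the identity to a statement about a single coordinate and then multiply over $I$. Since $\Gamma$ only decides which of $\Delta_i,\Delta_i'$ equals $E_i$ and which equals $\varnothing$, the two products together contain, for each $i$, exactly one factor $\rho_{L_i}(E_i)$ and one factor $\rho_{L_i}(\varnothing)$. Hence the product of the two products equals
$$
 \prod_{i\in I}\rho_{L_i}(E_i)\,\rho_{L_i}(\varnothing),
$$
and it suffices to show that $\rho_{L_i}(E_i)\,\rho_{L_i}(\varnothing)=1$ for every $i$.

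For a single coordinate $D_i$ of size $2$, I will split into the two possible cases for $L_i\le\Sym(D_i)\cong C_2$.

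\emph{Case $L_i=1$.} Every stabiliser is trivial and $\nu(1)=1$, so both ratios equal $1$.

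\emph{Case $L_i\cong C_2$.} The empty set is fixed by all of $L_i$, so
$$
 \rho_{L_i}(\varnothing)=\frac{|L_i:L_i|}{\nu(L_i)}=\frac12 .
$$
The nontrivial element swaps the two points of $D_i$, so it moves the singleton $E_i$. Hence $(L_i)_{E_i}=1$ and $\rho_{L_i}(E_i)=2/1=2$. The product is therefore $1$.

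Multiplying over $i\in I$ then gives the stated identity. For the consequence, note that both products are positive reals whose product is $1$, so they cannot both be less than $1$; hence at least one of them is at least $1$.

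I do not expect any real obstacle. The only point needing care is the bookkeeping in the first step, namely that each coordinate contributes exactly one factor $\rho_{L_i}(E_i)$ and one factor $\rho_{L_i}(\varnothing)$, regardless of whether $i\in\Gamma$. The rest is the two-line case check above.
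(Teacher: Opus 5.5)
Your proof is correct and follows the paper's argument essentially verbatim. The paper makes the same case split $L_i=1$ versus $L_i\cong C_2$, obtains the same local values $2$ and $1/2$, and your regrouping into factors $\rho_{L_i}(E_i)\,\rho_{L_i}(\varnothing)$ is just its observation that $\rho_{L_i}(\Delta_i)\,\rho_{L_i}(\Delta_i')=1$ for each $i$, multiplied over $I$.
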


\begin{proof}
Since $D_i$ has two elements, either $L_i=1$ or
$L_i=\Sym(D_i)\cong C_2$. If $L_i=1$, both local abundance ratios are $1$.
If $L_i\cong C_2$, the singleton has abundance ratio $2$, while the empty
set has abundance ratio $1/2$. Thus
$$
 \rho_{L_i}(\Delta_i)\rho_{L_i}(\Delta_i')=1
$$
for every $i\in I$. Multiplying these equalities gives the displayed
identity. Since both products are positive, at least one of them is at
least $1$.
\end{proof}

\begin{proof}[Proof of Theorem~\textup{\ref{thm:top-hni}}]
Induct on $|\Omega|$, the case $|\Omega|=1$ being trivial. Suppose first that $P$
is primitive. If $|\Omega|>9$, then a regular subset supplied by
Lemma~\ref{lem:large} is abundant. Degrees three and four are covered by Lemma~\ref{lem:deg34}, degrees
$5,7,8,9$ by Lemma~\ref{lem:deg5789}, and in degree two a singleton has trivial
stabiliser. These are all primitive cases.

Now let $P$ be imprimitive, and fix an unrefinable block system
$D_1,\ldots,D_m$. Let $I=\{1,\ldots,m\}$, where $i$ indexes $D_i$, and let
$J$ be the local primitive group on $D_1$. Let $\overline P\leq\Sym(I)$ be
the group induced by $P$ on $I$. By induction choose
$\Gamma\subseteq I$ with
$\rho_{\overline{P}}(\Gamma)\ge1$. Since $\overline{P}$ is nontrivial,
$\rho_{\overline{P}}(\varnothing)=\rho_{\overline{P}}(I)=1/\nu(\overline{P})<1$.
The abundant subset $\Gamma$ is therefore neither empty nor all of $I$. Thus
both $\Gamma$ and $I\setminus\Gamma$ are nonempty.

If $J\not\cong C_2$, the construction below adapts the proof of
\cite[Theorem~2]{Glu83}, with an abundant top subset and two
chain-abundant local orbits in place of the corresponding regular subsets.
The group $J$ has at least two distinct chain-abundant orbits by
Lemmas~\ref{lem:deg34},~\ref{lem:deg5789} and~\ref{lem:large}. For each
$i\in I$, choose $\Delta_i\subseteq D_i$ in the first orbit if
$i\in\Gamma$, and in the second orbit otherwise, so that
$\rho_{L_i}(\Delta_i)\geq1$. Let $\Delta$ be the union of the subsets
$\Delta_i$. Let $T\leq\overline P$ be
the subgroup preserving the colours of the corresponding coordinate tuple.
By Lemma~\ref{lem:fine-label}, $T\leq\overline{P}_\Gamma$. Hence
$|\overline{P}:T|/\nu(T)\ge\rho_{\overline{P}}(\Gamma)\ge1$. Every local
factor in~\eqref{eq:coord-clean} is also at least $1$. It follows that
$\rho_P(\Delta)\ge1$.

If $J\cong C_2$, choose a singleton $E_i\subseteq D_i$ for every $i\in I$.
Use $E_i$ when $i\in\Gamma$ and the empty subset of $D_i$ otherwise. The two
local orbits are distinct. If $T\leq\overline P$ is the subgroup preserving
the colours of the corresponding coordinate tuple, then
$T\leq\overline{P}_\Gamma=\overline{P}_{\Gamma^c}$, and the top factor is at
least $1$. By Lemma~\ref{lem:c2-balance}, either this choice of local subsets
or the choice obtained by interchanging the singleton and the empty set on
every block has product of local ratios at least $1$. Let $T'$ be the subgroup
preserving the colours of the interchanged tuple. Its singleton coordinates
are indexed by $\Gamma^c$, so
$T'\leq\overline P_{\Gamma^c}=\overline P_\Gamma$. Thus the top factor is at
least $1$ for either tuple. Let $\Delta$ be the union of the local subsets in
whichever of the two choices has product of local ratios at least $1$.
Applying Theorem~\ref{thm:coordinate} to the corresponding coordinate tuple
gives $\rho_P(\Delta)\ge1$.
\end{proof}

\section{A weighted transitive theorem}
\label{sec:weighted}

Theorem~\ref{thm:weighted} is a weighted version of
Theorem~\ref{thm:top-hni}. Each point $i\in\Omega$ carries two positive real
weights $z_i$ and $n_i$, with the only condition that $z_i n_i^3\ge1$. We
call this inequality the \emph{cubic condition}. For a subset
$\Delta\subseteq\Omega$, the weight $z_i$ is used when $i\in\Delta$, and
$n_i$ is used when $i\notin\Delta$. The theorem asserts that $\Delta$ can be
chosen so that the resulting weighted abundance ratio is at least~$1$. In
the cubic case of Section~\ref{sec:imprimitive}, the theorem is applied to
the permutation group induced on the index set $I$ of the summands of an
imprimitive module.

\begin{theorem}\label{thm:weighted}
Let $P$ be a finite solvable transitive permutation group on $\Omega$.
For each $i\in\Omega$, let $z_i,n_i$ be positive real numbers satisfying
$z_i n_i^{3}\ge1$. Then there exists $\Delta\subseteq\Omega$ such that
$$
  \rho_P(\Delta)\prod_{i\in\Delta}z_i
  \prod_{i\notin\Delta}n_i\ge1.
$$
\end{theorem}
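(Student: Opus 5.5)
The plan is to turn the weighted statement into an unweighted averaging statement, and then prove that statement by the same induction on an unrefinable block system used for Theorem~\ref{thm:top-hni}.

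\emph{Reformulation.} Write $w(\Delta)=\prod_{i\in\Delta}z_i\prod_{i\notin\Delta}n_i$. Suppose $\mu$ is a probability distribution on $\pow(\Omega)$ such that every point $i$ lies in a $\mu$-random subset with probability exactly $1/4$. Then
$$\mathbb E_\mu\log w(\Delta)=\tfrac14\sum_i\log\bigl(z_in_i^3\bigr)\ge0,$$
by the cubic condition. Suppose moreover that $\mathbb E_\mu\log\rho_P(\Delta)\ge0$. Adding the two inequalities, some $\Delta$ in the support satisfies $\rho_P(\Delta)w(\Delta)\ge1$. Now $\rho_P$ is constant on $P$-orbits, and averaging $\mu$ over $P$ makes every point have the same inclusion probability, since $P$ is transitive. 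So it suffices to find finitely many subsets $\Delta^{(1)},\dots,\Delta^{(r)}$ and convex weights $a_k$ with
$$\sum_k a_k\frac{|\Delta^{(k)}|}{|\Omega|}=\frac14
\qquad\text{and}\qquad
\sum_k a_k\log\rho_P\bigl(\Delta^{(k)}\bigr)\ge0 .$$
Call this a \emph{quarter-balanced} family.

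\emph{Ingredients for such families.} The empty set has density $0$ and ratio $1/\nu(P)$. A regular subset has ratio $|P|$. Abundant subsets of density above $1/4$ come from Theorem~\ref{thm:top-hni} or from complements. The target is a pair: an abundant (or regular) subset $\Delta$ of density $\delta\ge1/4$, together with either $\varnothing$ or a second subset of density below $1/4$. The mixing condition reads
$$\tfrac{1/4-\delta'}{\delta-\delta'}\log\rho_P(\Delta)+\tfrac{\delta-1/4}{\delta-\delta'}\log\rho_P(\Delta')\ge0 .$$

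\emph{Primitive case.} For $|\Omega|>9$, use a regular subset $\Delta_0$ from Lemma~\ref{lem:large}, or its complement, so that the density is at least $1/2$. Mix it with $\varnothing$. Since $\nu(P)\le|P|$, this needs $\log|P|\ge3\log\nu(P)$ when $\delta=1$, and is much easier near $\delta=1/2$. Where this fails, I would use Gluck's result more carefully and take a regular subset of density close to $1/4$; I believe small regular subsets exist in solvable primitive groups of large degree, as in Seress's base bound. Degrees at most $9$ would be handled by the finite calculation of Appendix~\ref{sec:comp-perm}, extended to check the existence of quarter-balanced families.

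\emph{Imprimitive case and main obstacle.} Fix an unrefinable block system $D_1,\dots,D_m$ with local group $J$ and top group $\overline P$. By induction, $\overline P$ carries a quarter-balanced family, or more flexibly a weighted subset for suitable block weights. Assign to block $j$ the weights
$$Z_j=\max_{\Delta_j\in\orbit_1}\rho_{L_j}(\Delta_j)\,w_j(\Delta_j),
\qquad
N_j=\max_{\Delta_j\in\orbit_2}\rho_{L_j}(\Delta_j)\,w_j(\Delta_j),$$
where $\orbit_1,\orbit_2$ are two distinct local orbits and $w_j$ is the restriction of the weight to $D_j$. Theorem~\ref{thm:coordinate} together with Lemma~\ref{lem:fine-label} then gives $\rho_P(\Delta)w(\Delta)\ge\rho_{\overline P}(\Gamma)\prod_{j\in\Gamma}Z_j\prod_{j\notin\Gamma}N_j$. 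The induction closes once $Z_jN_j^3\ge1$ for every block.

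That block inequality is the step I expect to be hardest. It is a weighted local statement for every chain $L\trianglelefteq K\trianglelefteq J$. I would prove it by the same averaging: take $\orbit_1$ and $\orbit_2$ with densities roughly $1/4$ and $1/2$ (singletons and pairs in small degree, regular subsets otherwise), and check the cube inequality from the chain-abundant tables. The case $J\cong C_2$ must again be treated separately. There I would use Lemma~\ref{lem:c2-balance}-style pairing: the singleton choice and the empty choice have local ratios multiplying to $1$ and weight product $z_in_i$, and an extra factor $n_i^2$ must be absorbed by choosing whichever of the two balanced tuples has the larger weight.
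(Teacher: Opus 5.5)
There is a genuine gap. Your reduction is correct: a quarter-balanced family is the linear-programming dual of the paper's condition \eqref{eq:one-parameter}, and your averaging over $P$ is Lemma~\ref{lem:symmetrise}. But the block inequality $Z_jN_j^3\ge1$, on which the whole imprimitive step rests, is left unproved, and with the orbits you propose it is false.

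\textbf{The block inequality fails for your orbits.} After symmetrising, a block carries the weight $r^{|D|-4|\Delta_j|}$. Already for the chain with $L=1$, single orbits of sizes $a_1,a_2$ give $Z_LN_L^3=r^{4(|D|-a_1-3a_2)}$. The cube fixes the mixing proportions at $1/4$ and $3/4$, so a fixed pair of orbits works for all $r$ only if $a_1+3a_2=|D|$. Your choices violate this:
\begin{itemize}
\item[(i)] In degrees $3,4$, singletons and pairs give $a_1+3a_2\in\{5,7\}>|D|$, so the inequality fails for every $r>1$.
\item[(ii)] In large degree, a regular $\Delta_0$ of size $a$ with its complement gives $|D|+2a$ or $3|D|-2a$, both exceeding $|D|$, so again it fails for every $r>1$.
\item[(iii)] In degrees $8,9$, singletons and pairs give $a_1+3a_2<|D|$, so it fails for every $r<1$.
\end{itemize}

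\textbf{Two ideas are missing here.} First, $r\le1$ needs no block analysis at all. Theorem~\ref{thm:top-hni} gives an abundant $\Delta$, and since $\rho_P(\Delta)=\rho_P(\Delta^c)$ you may take $|\Delta|\ge|\Omega|/2$, so $r^{|\Omega|-4|\Delta|}\ge1$. After this, local pairs only need $r\ge1$ and $a_Z+3a_N\le|D|$ (Lemma~\ref{lem:weighted-small}(3)). Second, $\mathcal Z$ and $\mathcal N$ should be disjoint \emph{collections} of orbits, allowed to contain $\{\varnothing\}$, so that the maximum switches with $r$. The paper uses $\mathcal Z=\{(\Delta_0^c)^J\}$, $\mathcal N=\{\Delta_0^J,\{\varnothing\}\}$ in large degree (Lemma~\ref{lem:three-label}). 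In degrees $3$ and $4$ it uses $\mathcal Z=\{\mathcal O_0\}$, respectively $\mathcal Z=\{\mathcal O_0,\mathcal O_2\}$, against $\mathcal N=\{\mathcal O_1\}$.

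\textbf{Non-uniform weights inside a block.} A third problem is that you restrict the original non-uniform $z_i,n_i$ to each block and propose to prove the block inequality ``by the same averaging''. Averaging over a $J$-orbit separates $\rho_L$ from the weight only when $\rho_L$ is constant on $J$-orbits. This fails when $L$ is not normal in $J$: for $L=\langle(12)(34)\rangle\trianglelefteq V_4\trianglelefteq A_4$, the $2$-subset $\{1,2\}$ has ratio $1/2$ while $\{1,3\}$ has ratio $2$. So the subset favoured by the weights may have a small ratio. The paper never needs non-uniform weights inside a block. At every inductive stage it proves only the one-parameter statement for $P$, and it invokes the full weighted theorem only for $\overline P$, by induction, with weights $Z_{L_i}(r),N_{L_i}(r)$ (Lemma~\ref{lem:cubic-pair-transfer}).

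\textbf{The case $J\cong C_2$.} Uniform weights also settle this case at once, because the two complementary tuples receive reciprocal factors $r^{\pm(2m-4|\Gamma|)}$ (Lemma~\ref{lem:c2-weighted}). With non-uniform weights, the two weight products multiply to $\prod_i z_{e_i}n_{e_i}n_{e_i'}^2$. So you would have to choose $E_i=\{e_i\}$ with $n_{e_i}\le n_{e_i'}$; simply picking the tuple of larger weight does not absorb the extra factor.

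\textbf{The primitive large-degree case.} This needs no small regular subsets. If $\min(|\Delta_0|,|\Delta_0^c|)\le|\Omega|/4$, mix $\Delta_0$ with $\Delta_0^c$, since both have ratio $|P|$. Otherwise mix the smaller of the two with $\varnothing$, putting weight above $1/2$ on it; this works because $\nu(P)\le|P|$.
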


Before proving the theorem, we establish several lemmas.

\begin{lemma}\label{lem:symmetrise}
Let $P$ be a finite transitive permutation group on $\Omega$. Suppose that
\begin{equation}\label{eq:one-parameter}
  \max_{\Delta\subseteq\Omega}
  \rho_P(\Delta)r^{\,|\Omega|-4|\Delta|}\ge1
\end{equation}
for every $r>0$. For each $i\in\Omega$, let $z_i,n_i$ be positive real
numbers satisfying $z_i n_i^3\ge1$. Then there exists
$\Delta\subseteq\Omega$ such that
$$
  \rho_P(\Delta)\prod_{i\in\Delta}z_i
  \prod_{i\notin\Delta}n_i\ge1.
$$
\end{lemma}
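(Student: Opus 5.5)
The plan is to reduce the weighted statement to the one-parameter hypothesis~\eqref{eq:one-parameter} by averaging the weights over the $P$-orbit of a subset. The key facts are that $\rho_P$ is constant on $P$-orbits in $\pow(\Omega)$, and that transitivity spreads each subset uniformly over the points.

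\emph{Setup.} Write $N=|\Omega|$. For $\Delta\subseteq\Omega$ set
$$w(\Delta)=\prod_{i\in\Delta}z_i\prod_{i\notin\Delta}n_i .$$
Put $a=\bigl(\prod_{i\in\Omega}z_i\bigr)^{1/N}$ and $b=\bigl(\prod_{i\in\Omega}n_i\bigr)^{1/N}$. These are the geometric means of the two weight families. Multiplying the cubic conditions $z_in_i^3\ge1$ over all $i$ and taking $N$th roots gives $ab^3\ge1$, that is, $a\ge b^{-3}$.

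\emph{Averaging.} Fix $\Delta$ with $|\Delta|=k$ and consider $w(\Delta^g)$ as $g$ ranges over $P$. We have $H_{\Delta^g}=H_\Delta^g$, so $\rho_P(\Delta^g)=\rho_P(\Delta)$. By transitivity, for each $i\in\Omega$ the number of $g\in P$ with $i\in\Delta^g$ equals $k|P|/N$. This follows by counting pairs $(j,g)$ with $j\in\Delta$ and $j^g=i$: for each $j$ there are $|P_i|=|P|/N$ such $g$. Hence
$$\prod_{g\in P}w(\Delta^g)=\prod_{i\in\Omega}z_i^{k|P|/N}\,n_i^{(N-k)|P|/N}=\bigl(a^{k}b^{N-k}\bigr)^{|P|}.$$
Therefore some $g\in P$ satisfies
$$w(\Delta^g)\ \ge\ a^kb^{N-k}\ \ge\ b^{-3k}b^{N-k}=b^{\,N-4k}.$$

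\emph{Conclusion.} Apply~\eqref{eq:one-parameter} with $r=b>0$. This gives a $\Delta$ with $\rho_P(\Delta)\,b^{N-4|\Delta|}\ge1$. Choosing $g$ as above yields
$$\rho_P(\Delta^g)\,w(\Delta^g)\ \ge\ \rho_P(\Delta)\,b^{N-4|\Delta|}\ \ge 1,$$
so $\Delta^g$ is the required subset.

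There is no serious obstacle. The only point needing care is the exponent bookkeeping: the averaged weight must come out as $a^kb^{N-k}$, and the bound $a\ge b^{-3}$ must then produce exactly the exponent $N-4k$ appearing in~\eqref{eq:one-parameter}. Since $b^{-3k}\le a^k$ holds for every $k\ge0$, the direction of that inequality is correct.
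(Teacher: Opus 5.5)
Your proof is correct and is essentially the paper's argument: take $r$ to be the geometric mean of the $n_i$, apply the hypothesis, and average over the translates $\Delta^g$ using the transitivity count $|P|\,|\Delta|/|\Omega|$. The only difference is cosmetic: the paper first reduces to $z_i=n_i^{-3}$, whereas you keep general $z_i$ and use $ab^3\ge1$ (also, $H_{\Delta^g}=H_\Delta^g$ in your averaging step should read $P_{\Delta^g}=g^{-1}P_\Delta g$).
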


\begin{proof}
Write $d=|\Omega|$. Since $z_i\ge n_i^{-3}$ for every $i\in\Omega$, it
is enough to prove the conclusion when $z_i=n_i^{-3}$ for every $i\in\Omega$.
Define
$$
  r=\left(\prod_{i\in\Omega}n_i\right)^{1/d}.
$$
By~\eqref{eq:one-parameter}, choose $\Delta\subseteq\Omega$ such that
$\rho_P(\Delta)r^{\,d-4|\Delta|}\ge1$.
The setwise stabilisers of $\Delta$ and $\Delta^g$ are conjugate for every
$g\in P$, so $\rho_P(\Delta^g)=\rho_P(\Delta)$. Since $P$ is transitive, the
number of elements $g\in P$ such that $i\in\Delta^g$ is the same for every
$i\in\Omega$. Counting the pairs $(i,g)\in\Omega\times P$ with
$i\in\Delta^g$ shows that this number is $|P|\,|\Delta|/d$.
Consequently
$$
\prod_{g\in P}\left(\rho_P(\Delta^g)\prod_{i\in\Delta^g}n_i^{-3}
\prod_{i\notin\Delta^g}n_i\right)
=\rho_P(\Delta)^{|P|}r^{\,|P|(d-4|\Delta|)}
=\left(\rho_P(\Delta)r^{\,d-4|\Delta|}\right)^{|P|}\ge1.
$$
All the factors are positive. Since their product is at least $1$, choose
$g\in P$ for which the factor indexed by $g$ is at least $1$. The inequalities
$z_i\ge n_i^{-3}$ then show that $\Delta^g$ satisfies the conclusion.
\end{proof}

We next formulate the local condition used to prove~\eqref{eq:one-parameter}
for $r\ge1$.

\begin{definition}\label{def:cubic-pair}
Let $J$ be a primitive permutation group on a finite set $D$. Let
$\mathcal Z$ and $\mathcal N$ be nonempty disjoint collections of $J$-orbits
on the power set $\pow(D)$. For each chain
$L\trianglelefteq K\trianglelefteq J$ and each $r>0$, define
$$
  Z_L(r)=\max_{\substack{\Delta\subseteq D\\ \Delta^J\in\mathcal Z}}
  \rho_L(\Delta)\,r^{\,|D|-4|\Delta|},
  \qquad
  N_L(r)=\max_{\substack{\Delta\subseteq D\\ \Delta^J\in\mathcal N}}
  \rho_L(\Delta)\,r^{\,|D|-4|\Delta|}.
$$
We call the ordered pair $(\mathcal Z,\mathcal N)$ a {\it cubic pair} for $J$
at $r$ if
$Z_L(r)N_L(r)^{3}\ge1$ for every such chain.
\end{definition}

\begin{lemma}\label{lem:cubic-pair-transfer}
Let $P$ be a finite transitive permutation group on $\Omega$. Let
$D_1,\ldots,D_m$ be an unrefinable block system. Set
$I=\{1,\ldots,m\}$, where $i$ indexes $D_i$, and let
$\overline P\leq\Sym(I)$ be the group induced by $P$ on $I$. Let $J$ be the
local group on $D_1$, let $r>0$, and suppose that
$(\mathcal Z,\mathcal N)$ is a cubic pair for $J$ at $r$.
Suppose that, for every choice of positive real numbers $z_i,n_i$, $i\in I$,
satisfying $z_i n_i^3\geq1$ for every $i\in I$, there exists
$\Gamma\subseteq I$ such that
\begin{equation}\label{eq:cubic-transfer-top}
  \rho_{\overline P}(\Gamma)
  \prod_{i\in\Gamma}z_i
  \prod_{i\notin\Gamma}n_i\geq1.
\end{equation}
Then there exists $\Delta\subseteq\Omega$ such that $\rho_P(\Delta)r^{\,|\Omega|-4|\Delta|}\ge1$.
\end{lemma}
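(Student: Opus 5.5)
We apply the hypothesis on $\overline P$ to weights built from the local maxima $Z_L(r)$ and $N_L(r)$, lift the resulting $\Gamma$ to a subset $\Delta$ with two colours, and finish with Theorem~\ref{thm:coordinate}.

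\textbf{Fixing the chains and weights.} Fix an order $1,\ldots,m$ of the blocks. Let $L_i\trianglelefteq K_i\trianglelefteq J_i$ be the coordinate chains for the permutation setting at the end of Section~\ref{sec:coordinate}, with $X_i=\{i\}\times\pow(D_i)$. The local groups $J_i$ are conjugate to $J$ via the elements $t_i$. Transporting through $t_i$ gives a chain $L_i^{t_i^{-1}}\trianglelefteq K_i^{t_i^{-1}}\trianglelefteq J$ on $D_1$. Hence Definition~\ref{def:cubic-pair} applies on each block, and the orbit collections $\mathcal Z,\mathcal N$ are identified on every $D_i$. Set
$$z_i=Z_{L_i}(r),\qquad n_i=N_{L_i}(r).$$
These are positive, being maxima of positive quantities over nonempty collections, and the cubic pair condition gives $z_in_i^3\ge1$. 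The hypothesis therefore supplies $\Gamma\subseteq I$ satisfying~\eqref{eq:cubic-transfer-top}.

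\textbf{Building $\Delta$.} For $i\in\Gamma$, choose $\Delta_i\subseteq D_i$ with orbit in $\mathcal Z$ attaining $z_i$. For $i\notin\Gamma$, choose $\Delta_i$ with orbit in $\mathcal N$ attaining $n_i$. Let $\Delta=\bigcup_i\Delta_i$, so that $|\Delta|=\sum_i|\Delta_i|$ and $|\Omega|=\sum_i|D_i|$. The exponent of $r$ then splits additively:
$$r^{|\Omega|-4|\Delta|}=\prod_i r^{|D_i|-4|\Delta_i|}.$$

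\textbf{Controlling the top factor.} Because $\mathcal Z$ and $\mathcal N$ are disjoint, a coordinate in $\Gamma$ and one outside $\Gamma$ have different colours. Colours are $P$-orbits on $\mathcal X$, read off on $X_1$ as $J$-orbits. So every colour-preserving element of $\overline P$ stabilises $\Gamma$, that is, $T\le\overline P_\Gamma$. This is the argument of Lemma~\ref{lem:fine-label}, except that there may be more than two colours; only the separation between $\mathcal Z$-colours and $\mathcal N$-colours is used. Inequality~\eqref{eq:top-monotone} then gives
$$\frac{|\overline P:T|}{\nu(T)}\ge\rho_{\overline P}(\Gamma).$$
The degenerate cases $\Gamma=\varnothing$ or $\Gamma=I$ need no special treatment, since the inequality holds trivially.

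\textbf{Combining.} By Theorem~\ref{thm:coordinate},
$$\rho_P(\Delta)\,r^{|\Omega|-4|\Delta|}\ \ge\ \rho_{\overline P}(\Gamma)\prod_i\rho_{L_i}(\Delta_i)\,r^{|D_i|-4|\Delta_i|}=\rho_{\overline P}(\Gamma)\prod_{i\in\Gamma}z_i\prod_{i\notin\Gamma}n_i\ \ge1.$$

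\textbf{Main obstacle.} The delicate point is the identification across blocks. The chain at block $i$ lives on $D_i$, not $D_1$. We must check that conjugating by $t_i$ preserves $\rho$ and subset sizes, so that $Z_{L_i}(r)$ is well defined via the transported chain. We must also check that the orbits in $\mathcal Z$ and $\mathcal N$ correspond to disjoint sets of colours. Both follow from the colour criterion via $J_1$-orbits recorded in Section~\ref{sec:coordinate}.
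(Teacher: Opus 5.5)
Your proposal is correct and follows the paper's proof essentially step for step. You take weights $z_i=Z_{L_i}(r)$, $n_i=N_{L_i}(r)$ from the transported coordinate chains and use the hypothesis on $\overline P$ to get $\Gamma$. You then choose maximising local subsets from $\mathcal Z$ on $\Gamma$ and from $\mathcal N$ off $\Gamma$, use disjointness of the two orbit families to get $T\le\overline P_\Gamma$, and finish with Theorem~\ref{thm:coordinate}, splitting the weight $r^{|\Omega|-4|\Delta|}$ block by block. Your remarks on transporting chains via $t_i$ and on possibly having more than two colours only make explicit what the paper leaves to the identifications of Section~\ref{sec:coordinate}.
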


\begin{proof}
For each $i\in I$, let $L_i\trianglelefteq K_i\trianglelefteq J_i$ be the
coordinate chain at $i$. The local identifications in
Section~\ref{sec:coordinate} allow us to use $\mathcal Z$ and $\mathcal N$ on
each $D_i$. Set $z_i=Z_{L_i}(r)$ and $n_i=N_{L_i}(r)$.
The cubic pair condition gives $z_i n_i^3\geq1$, so the hypothesis on
$\overline P$ supplies $\Gamma\subseteq I$ satisfying
\eqref{eq:cubic-transfer-top}. For each $i\in I$, choose a subset of $D_i$
attaining $Z_{L_i}(r)$ if
$i\in\Gamma$, and one attaining $N_{L_i}(r)$ otherwise. Let $\Delta$ be their
union, and let
$T\leq\overline P$ be the subgroup preserving the colours of the corresponding
coordinate tuple. If $\Gamma=\varnothing$ or $\Gamma=I$, then
$T\leq\overline{P}_\Gamma$ trivially. Otherwise the two orbit families are
disjoint, and again $T\leq\overline{P}_\Gamma$. In either case,
$|\overline{P}:T|/\nu(T)\ge\rho_{\overline{P}}(\Gamma)$. Multiplying~\eqref{eq:coord-clean} by
$r^{\,|\Omega|-4|\Delta|}$ and grouping the weight block by block gives
$$\rho_P(\Delta)r^{\,|\Omega|-4|\Delta|}\ge\rho_{\overline{P}}(\Gamma)\prod_{i\in\Gamma}z_i\prod_{i\notin\Gamma}n_i\ge1,$$
which proves the assertion.
\end{proof}

We next construct cubic pairs for the primitive groups other than $C_2$. We
begin with the case in which a regular subset is available.

\begin{lemma}\label{lem:three-label}
Let $J$ be a nontrivial primitive permutation group on a finite set $D$. Let
$\Delta_0\subseteq D$ be a regular subset with
$|\Delta_0|=a\ne|D|/2$. Then $\mathcal Z=\{(\Delta_0^c)^J\}$ and
$\mathcal N=\{\Delta_0^J,\,\{\varnothing\}\}$ form a cubic pair for $J$ at
every $r>0$.
\end{lemma}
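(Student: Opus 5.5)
The plan is to verify the two requirements of Definition~\ref{def:cubic-pair} directly. First, $\mathcal Z$ and $\mathcal N$ must be nonempty disjoint collections of $J$-orbits. Second, for every chain $L\trianglelefteq K\trianglelefteq J$ and every $r>0$ we need $Z_L(r)N_L(r)^3\ge1$. Throughout write $d=|D|$ and $a=|\Delta_0|$.

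For disjointness, first observe that $\Delta_0\ne D$. Indeed $J_D=J\ne1$, whereas $\Delta_0$ is regular, so $\Delta_0^c\neq\varnothing$. Hence $(\Delta_0^c)^J\ne\{\varnothing\}$. Moreover $|\Delta_0^c|=d-a\ne a$ because $a\ne d/2$, so $(\Delta_0^c)^J\ne\Delta_0^J$. If $\Delta_0=\varnothing$, the two orbits listed in $\mathcal N$ coincide, but $\mathcal N$ is still a nonempty collection, so nothing changes.

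For the inequality, fix a chain and $r>0$. I would bound $Z_L(r)$ and $N_L(r)$ from below by specific representatives:
\begin{itemize}
\item Since $\Delta_0^c$ is regular for $J$, we have $L_{\Delta_0^c}=1$, so $\rho_L(\Delta_0^c)=|L|$. This gives $Z_L(r)\ge|L|\,r^{4a-3d}$.
\item Similarly $\rho_L(\Delta_0)=|L|$, which gives the bound $N_L(r)\ge|L|\,r^{d-4a}$.
\item Since $\rho_L(\varnothing)=1/\nu(L)$, we also have $N_L(r)\ge r^{d}/\nu(L)$.
\end{itemize}

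Since $N_L(r)^3$ is at least the cube of either lower bound, $Z_L(r)N_L(r)^3$ is at least each of the following two quantities:
$$
A=|L|\,r^{4a-3d}\bigl(|L|\,r^{d-4a}\bigr)^3=|L|^4r^{-8a},
\qquad
B=|L|\,r^{4a-3d}\,\frac{r^{3d}}{\nu(L)^3}=\frac{|L|\,r^{4a}}{\nu(L)^3}.
$$
The key step is to choose between the two $\mathcal N$-representatives according to $r$. Rather than splitting into cases, compare $A$ with $B^2$:
$$
AB^2=\frac{|L|^6}{\nu(L)^6}\ge1,
$$
since $\nu(L)\le|L|$. Now $A$ and $B$ are positive. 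If $A<1$, then $B^2\ge1/A>1$, so $B>1$. Hence $\max(A,B)\ge1$, and therefore $Z_L(r)N_L(r)^3\ge1$.

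The only delicate point is finding the right combination that eliminates $r$. Weighting $B$ with exponent $2$ against $A$ cancels the $r^{\pm 8a}$ factors exactly, and this is what makes the empty set the appropriate second member of $\mathcal N$. The remaining checks are routine bookkeeping of the exponents $d-4|\Delta|$ and the orbit-disjointness observations above.
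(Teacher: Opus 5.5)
Your proof is correct and follows the paper's argument. You use the same representatives $\Delta_0^c$, $\Delta_0$, $\varnothing$ to reach the same two quantities $|L|^4r^{-8a}$ and $|L|r^{4a}/\nu(L)^3$, and your identity $AB^2=(|L|/\nu(L))^6$ is simply a case-free version of the paper's split according to whether $r^{4a}\le|L|\nu(L)$. (The case $\Delta_0=\varnothing$ that you mention cannot occur, since $J_\varnothing=J\ne1$ contradicts regularity; so the three orbits are in fact distinct, as the paper notes.)
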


\begin{proof}
The three orbits are distinct. Fix a chain
$L\trianglelefteq K\trianglelefteq J$. Regularity gives
$\rho_L(\Delta_0)=\rho_L(\Delta_0^c)=|L|$, while
$\rho_L(\varnothing)=\nu(L)^{-1}$. With
$s=r^{4a}$,
$$
  Z_L(r)=|L|\,r^{4a-3|D|},\qquad N_L(r)=\max\{|L|\,r^{|D|-4a},\,\nu(L)^{-1}r^{|D|}\},
$$
so
$$
  Z_L(r)N_L(r)^{3}=\max\Big\{\frac{|L|^4}{s^{2}},\ \frac{|L|}{\nu(L)^{3}}\,s\Big\}.
$$
If $s\le |L|\nu(L)$, the first term is at least $(|L|/\nu(L))^2$. If
$s\ge |L|\nu(L)$, the second term is at least $(|L|/\nu(L))^2$. As
$\nu(L)\le |L|$, the product is at least $1$.
\end{proof}

\begin{lemma}\label{lem:weighted-small}
Let $J$ be a solvable primitive permutation group on a finite set $D$ with
$|D|\in\{3,4,5,7,8,9\}$. Then $J$ admits a cubic pair for every $r\ge1$.
The following choices suffice. When $|D|=3$ or $4$, let $\mathcal O_k$
denote the $J$-orbit consisting of the subsets of $D$ of size $k$.
\begin{enumerate}
\item If $|D|=3$, then $J\cong C_3$ or $S_3$. Take
$\mathcal Z=\{\mathcal O_0\}$ and $\mathcal N=\{\mathcal O_1\}$.
\item If $|D|=4$, then $J\cong A_4$ or $S_4$. Take
$\mathcal Z=\{\mathcal O_0,\mathcal O_2\}$ and
$\mathcal N=\{\mathcal O_1\}$.
\item If $|D|\in\{5,7,8,9\}$, let $\mathcal A$ and $\mathcal B$ be
distinct chain-abundant $J$-orbits on $\pow(D)$ such that the subsets in
$\mathcal A$ and $\mathcal B$ have sizes $a_Z$ and $a_N$, respectively, and
$a_Z+3a_N\leq|D|$. Such orbits exist by
Lemma~\textup{\ref{lem:deg5789}}. Set $\mathcal Z=\{\mathcal A\}$ and
$\mathcal N=\{\mathcal B\}$.
\end{enumerate}
\end{lemma}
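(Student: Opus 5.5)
The plan is to verify the cubic-pair inequality $Z_L(r)N_L(r)^3\ge1$ directly, case by case. Throughout, fix a chain $L\trianglelefteq K\trianglelefteq J$ and $r\ge1$. In each case $\mathcal Z$ and $\mathcal N$ consist of orbits of subsets of different sizes, so they are nonempty and disjoint. The only inputs are the data recorded in Lemmas~\ref{lem:deg34} and~\ref{lem:deg5789}, together with the value $\rho_L(\varnothing)=\nu(L)^{-1}$. I would treat case~(3) first, because there the weight $r$ simply factors out.

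\emph{Case (3).} By chain-abundance there are $\Delta\in\mathcal A$ and $\Delta'\in\mathcal B$ with $\rho_L(\Delta)\ge1$ and $\rho_L(\Delta')\ge1$. Hence
$$Z_L(r)N_L(r)^3\ge r^{|D|-4a_Z}\,r^{3(|D|-4a_N)}=r^{4(|D|-a_Z-3a_N)}\ge1,$$
since $r\ge1$ and $a_Z+3a_N\le|D|$. It remains to check that suitable sizes exist, using Lemma~\ref{lem:deg5789}.
\begin{itemize}
\item In the generic case take $a_Z=2$ and $a_N=1$. Then $a_Z+3a_N=5\le|D|$.
\item For $\mathrm{A}\Gamma\mathrm{L}(1,9)$ and $\mathrm{AGL}(2,3)$ take $a_Z=3$ and $a_N=2$. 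Then $a_Z+3a_N=9=|D|$.
\end{itemize}
This is the only case that uses the hypothesis $r\ge1$.

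\emph{Case (1), $|D|=3$.} Here $Z_L(r)=\nu(L)^{-1}r^3$ and $N_L(r)=s\,r^{-1}$, where $s$ is the largest singleton ratio. The powers of $r$ cancel, so it suffices to check $s^3\ge\nu(L)$. By the proof of Lemma~\ref{lem:deg34}:
\begin{itemize}
\item for $L=1$ this reads $1\ge1$;
\item for $L=C_3$ it reads $27\ge3$;
\item for $L=S_3$ it reads $(3/2)^3=27/8\ge3$.
\end{itemize}

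\emph{Case (2), $|D|=4$.} Singletons carry weight $r^0$, so $N_L(r)=s_1$ is the singleton entry of the table in Lemma~\ref{lem:deg34}. Also
$$Z_L(r)=\max\{\nu(L)^{-1}r^4,\ s_2r^{-4}\},$$
where $s_2$ is the size-$2$ entry of that table. Since $\max\{ax,b/x\}\ge\sqrt{ab}$ for all $x>0$, it suffices to show $s_2\,s_1^6\ge\nu(L)$. This gives the bound for every $r>0$, not only $r\ge1$. Reading the values from the table:
\begin{itemize}
\item $L=1$: $1\ge1$;
\item $L=C_2$: $2\cdot 2^6\ge2$;
\item $L=V_4$: $4^6\ge4$;
\item $L=A_4$: $3\,(4/3)^6\approx16.9\ge4$;
\item $L=S_4$: $\tfrac32(4/3)^6=\tfrac{2048}{243}\approx8.43\ge8$.
\end{itemize}

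The main obstacle is case~(2) with $L=S_4$. The empty set alone fails there, since $(4/3)^3/8<1$, and the inequality above holds only narrowly. So it is essential both to put $\mathcal O_0$ and $\mathcal O_2$ together in $\mathcal Z$ and to use the exact table values $\nu(S_4)=8$, $s_1=4/3$ and $s_2=3/2$.
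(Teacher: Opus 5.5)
Your proposal is correct and follows the paper's proof essentially verbatim: the powers of $r$ cancel in case~(1); in case~(2) the bound $\max\{s/\nu(L),\,\alpha/s\}\ge\sqrt{\alpha/\nu(L)}$ with $s=r^4$ reduces the claim to $\alpha\beta^6\ge\nu(L)$, checked on the five bottom groups with the same values; and case~(3) is the exponent count $4(|D|-a_Z-3a_N)\ge0$ with $(a_Z,a_N)=(2,1)$ or $(3,2)$. Your side remark that only case~(3) actually needs $r\ge1$ is correct, though the paper does not state it explicitly.
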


\begin{proof}
In~(1), the powers of $r$ occurring in $Z_L(r)$ and $N_L(r)$ are $r^3$
and $r^{-1}$, respectively. They therefore cancel in
$Z_L(r)N_L(r)^3$. The value is $1,9,9/8$ for $L=1,C_3,S_3$
respectively. In~(2), set $\alpha=\max_{|\Delta|=2}\rho_L(\Delta)$ and
$\beta=\max_{|\Delta|=1}\rho_L(\Delta)$. With $s=r^4$, one has
$N_L(r)=\beta$ and $Z_L(r)=\max\{\alpha/s,s/\nu(L)\}\ge\sqrt{\alpha/\nu(L)}$. Hence $Z_L(r)N_L(r)^{3}\ge\beta^{3}\sqrt{\alpha/\nu(L)}$, and it suffices that $\alpha\beta^{6}\ge
\nu(L)$. The five possible bottom groups give
$$
\renewcommand{\arraystretch}{1.15}
\begin{array}{c|ccccc}
 L & 1 & C_2 & V_4 & A_4 & S_4\\\hline
 \alpha\beta^{6} & 1 & 128 & 4\,096 & 4\,096/243 & 2\,048/243\\
 \nu(L) & 1 & 2 & 4 & 4 & 8
\end{array}
$$
and $\alpha\beta^{6}\ge \nu(L)$ in every column. In~(3), both orbits are
chain-abundant. Hence each $\rho_L$ may be taken at least $1$, and then
$Z_L(r)N_L(r)^{3}\ge r^{(|D|-4a_Z)+3(|D|-4a_N)}=r^{4(|D|-a_Z-3a_N)}\ge1$ for $r\ge1$,
the sizes being $(a_Z,a_N)=(2,1)$ or $(3,2)$ as in
Lemma~\ref{lem:deg5789}.
\end{proof}

\begin{lemma}\label{lem:c2-weighted}
Let $P$ be a finite transitive permutation group on $\Omega$. Let
$D_1,\ldots,D_m$ be a block system with $|D_i|=2$ for every $i$, and let
$I=\{1,\ldots,m\}$, where $i$ indexes $D_i$. Suppose that the local group on $D_1$ is isomorphic to
$C_2$, and let
$\overline P\leq\Sym(I)$ be the group induced by $P$ on $I$. If $\overline P$ has
an abundant subset, then, for every $r>0$, there exists
$\Delta\subseteq\Omega$ such that
$$
  \rho_P(\Delta)r^{\,|\Omega|-4|\Delta|}\ge1.
$$
\end{lemma}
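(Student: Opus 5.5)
The plan is to repeat the $C_2$ case in the proof of Theorem~\ref{thm:top-hni}, now carrying the weight $r^{|\Omega|-4|\Delta|}$, and to check that the weights of the two complementary choices cancel. Write $|\Omega|=2m$, let $\Gamma\subseteq I$ be an abundant subset of $\overline P$, and choose a singleton $E_i\subseteq D_i$ for each $i\in I$. Define two coordinate tuples, exactly as in Lemma~\ref{lem:c2-balance}:
\begin{itemize}
\item[(A)] the tuple with $\Delta_i=E_i$ for $i\in\Gamma$ and $\Delta_i=\varnothing$ for $i\notin\Gamma$;
\item[(B)] the tuple with $\Delta_i'=\varnothing$ for $i\in\Gamma$ and $\Delta_i'=E_i$ for $i\notin\Gamma$.
\end{itemize}
Let $\Delta_A$ and $\Delta_B$ be the corresponding subsets of $\Omega$, so $|\Delta_A|=|\Gamma|$ and $|\Delta_B|=m-|\Gamma|$.

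First I bound the top factors. Since $J\cong C_2$, all singletons of $D_1$ form one $J$-orbit, and the empty set forms another, distinct orbit. Suppose $\Gamma$ is a proper nonempty subset of $I$. Then each tuple has exactly two colours, with colour classes $\Gamma$ and $\Gamma^c$. Lemma~\ref{lem:fine-label} gives $T_A\le\overline P_\Gamma$ and $T_B\le\overline P_{\Gamma^c}=\overline P_\Gamma$. Now suppose $\Gamma\in\{\varnothing,I\}$, which is possible only if $\overline P=1$, i.e.\ $m=1$. Then trivially $T\le\overline P=\overline P_\Gamma$. In every case \eqref{eq:top-monotone} gives top factor at least $\rho_{\overline P}(\Gamma)\ge1$ for both tuples. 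Theorem~\ref{thm:coordinate}, in its permutation form with $X_i=\{i\}\times\pow(D_i)$, then yields
$$\rho_P(\Delta_A)\ge\prod_{i\in I}\rho_{L_i}(\Delta_i),\qquad \rho_P(\Delta_B)\ge\prod_{i\in I}\rho_{L_i}(\Delta_i').$$

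Next I multiply in the weights. The exponents are $|\Omega|-4|\Delta_A|=2m-4|\Gamma|$ and $|\Omega|-4|\Delta_B|=4|\Gamma|-2m$, which sum to $0$. Hence
$$\Bigl(\rho_P(\Delta_A)r^{|\Omega|-4|\Delta_A|}\Bigr)\Bigl(\rho_P(\Delta_B)r^{|\Omega|-4|\Delta_B|}\Bigr)\ \ge\ \prod_{i\in I}\rho_{L_i}(\Delta_i)\prod_{i\in I}\rho_{L_i}(\Delta_i')=1,$$
where the last equality is Lemma~\ref{lem:c2-balance}. That lemma applies because each bottom group $L_i\le\Sym(D_i)$ is either $1$ or $C_2$. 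Both factors on the left are positive, so one of them is at least $1$, and the corresponding $\Delta\in\{\Delta_A,\Delta_B\}$ is the required subset. This works for every $r>0$, with no case split on $r$.

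I expect no serious obstacle. The one point to check carefully is the top-factor bound when $\Gamma$ is empty or all of $I$, since Lemma~\ref{lem:fine-label} needs exactly two colours; that degenerate case is handled by the trivial inclusion $T\le\overline P$ above. The other thing to confirm is the exponent cancellation, which holds because each block has size $2$, so that $|\Delta_A|+|\Delta_B|=m=|\Omega|/2$.
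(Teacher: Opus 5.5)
Your proposal is correct and follows the paper's proof essentially step for step. It uses the same two complementary singleton/empty tuples, the same top-factor bound via $\overline P_\Gamma=\overline P_{\Gamma^c}$, Lemma~\ref{lem:c2-balance} for the product of local ratios, and the reciprocity of the weight factors $r^{\pm(2m-4|\Gamma|)}$. Your explicit treatment of the degenerate case $\Gamma\in\{\varnothing,I\}$, which can occur only when $m=1$, is a harmless extra that the paper's wording covers implicitly.
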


\begin{proof}
Choose $\Gamma\subseteq I$ with
$\rho_{\overline P}(\Gamma)\geq1$. For each $i\in I$, let $L_i$ be the
bottom group at $i$ and choose a singleton $E_i\subseteq D_i$. Set
$$
  \Delta=\bigcup_{i\in\Gamma}E_i,
  \qquad
  \Delta'=\bigcup_{i\notin\Gamma}E_i.
$$
Let $T,T'\leq\overline P$ be the subgroups preserving the colours of the
coordinate tuples corresponding to $\Delta$ and $\Delta'$, respectively.
In the first tuple, the singleton entries are indexed by
$\Gamma$, while in the second they are indexed by $I\setminus\Gamma$.
Thus $T,T'\leq\overline P_\Gamma$. By~\eqref{eq:top-monotone},
$$
  \frac{|\overline P:T|}{\nu(T)}
  \geq\rho_{\overline P}(\Gamma),
  \qquad
  \frac{|\overline P:T'|}{\nu(T')}
  \geq\rho_{\overline P}(\Gamma),
$$
so both top factors are at least $1$.

Lemma~\ref{lem:c2-balance} gives
$$
 \left(\prod_{i\in I}\rho_{L_i}(\Delta\cap D_i)\right)
 \left(\prod_{i\in I}\rho_{L_i}(\Delta'\cap D_i)\right)=1.
$$
Moreover, $|\Omega|=2m$, $|\Delta|=|\Gamma|$ and
$|\Delta'|=m-|\Gamma|$. Hence the corresponding weight factors are
$$
 r^{2m-4|\Gamma|}
 \qquad\text{and}\qquad
 r^{2m-4(m-|\Gamma|)}=r^{-(2m-4|\Gamma|)},
$$
which are reciprocal. Combining each local product with its weight
factor therefore gives two positive numbers whose product is $1$.

Apply Theorem~\ref{thm:coordinate} to the two coordinate tuples and
multiply the resulting inequalities by their respective weight factors.
Consequently, the asserted inequality holds for at least one of $\Delta$
and $\Delta'$.
\end{proof}

\begin{proof}[Proof of Theorem~\ref{thm:weighted}]
We prove the theorem by induction on $d=|\Omega|$. If $d=1$, write
$\Omega=\{i\}$. The group $P$ is trivial. If $n_i\ge1$, then
$\Delta=\varnothing$ gives the required inequality. If $n_i<1$, then
$z_i\ge n_i^{-3}>1$, and $\Delta=\Omega$ gives the required inequality.

Assume that $d\ge2$ and that the theorem holds for smaller degrees. By
Lemma~\ref{lem:symmetrise}, it suffices to prove~\eqref{eq:one-parameter} for
every $r>0$. Suppose first that $0<r\leq1$. By
Theorem~\ref{thm:top-hni}, choose $\Delta\subseteq\Omega$ with
$\rho_P(\Delta)\geq1$. Since
$P_\Delta=P_{\Delta^c}$, we have
$\rho_P(\Delta)=\rho_P(\Delta^c)$. Replacing $\Delta$ by its complement if
necessary, we may assume that $|\Delta|\ge d/2$. It follows that
$d-4|\Delta|\le-d\le0$, and hence
$\rho_P(\Delta)r^{\,d-4|\Delta|}\ge1$.

Now fix $r\ge1$. Every solvable primitive permutation group of degree at
least three has a cubic pair at $r$. For degree greater than nine, use the
regular subset supplied by Lemma~\ref{lem:large} and apply
Lemma~\ref{lem:three-label}. In degrees $3,4,5,7,8,9$, such a pair is
supplied by Lemma~\ref{lem:weighted-small}.

Suppose first that $P$ is primitive. If $P\cong C_2$, take
$D_1=\Omega$. The local group is $P\cong C_2$, and the group
$\overline P$ induced on $I=\{1\}$ is trivial, so it has an abundant subset.
Hence Lemma~\ref{lem:c2-weighted} applies. Otherwise choose a cubic pair
for $P$ at $r$ as above. Taking $L=K=P$ in its defining inequality gives
$Z_P(r)N_P(r)^3\ge1$. Hence
$\max\{Z_P(r),N_P(r)\}\ge1$. Choose a subset $\Delta$ attaining whichever
of $Z_P(r)$ and $N_P(r)$ is larger. Then
$\rho_P(\Delta)r^{\,d-4|\Delta|}\ge1$, so~\eqref{eq:one-parameter} holds.

Now suppose that $P$ is imprimitive. Choose an unrefinable block system and
number its blocks $D_i$, where $I=\{1,\ldots,m\}$ is the index set. Let $J$
be the local group on $D_1$, and let $\overline P\leq\Sym(I)$ be the group
induced by $P$ on $I$. If $J\cong C_2$,
Theorem~\ref{thm:top-hni} provides an abundant
subset of $I$ for $\overline P$, so Lemma~\ref{lem:c2-weighted} applies. Otherwise
$J$ has a cubic pair at $r$ by the preceding paragraph. The group
$\overline P$ is solvable and transitive of degree $m<d$, so the induction
hypothesis gives the weighted hypothesis of
Lemma~\ref{lem:cubic-pair-transfer}. That lemma gives
$\Delta\subseteq\Omega$ with
$\rho_P(\Delta)r^{\,d-4|\Delta|}\ge1$.

Together with the case $0<r\le1$, this proves~\eqref{eq:one-parameter} for
every $r>0$. Lemma~\ref{lem:symmetrise} completes the proof.
\end{proof}

\section{Primitive linear groups}
\label{sec:primitive}

We now prove the abundance theorem for faithful irreducible primitive
modules. The nonmetacyclic case follows from the classification of actions
without a regular orbit~\cite{HY23} and the finite verification in
Appendix~\ref{sec:computations}. The remaining metacyclic case is handled by
a semilinear argument.

\subsection{The nonmetacyclic case}

We require only the primitive part of the classification of residual actions.

\begin{theorem}[Primitive residual actions]
\label{thm:residual-classification}
Let $H$ be a finite solvable group acting faithfully, irreducibly and
primitively on a finite $\mathbb{F}_p$-space $V$. If $H$ is not
metacyclic and has no regular orbit, then its action is linearly equivalent
to one represented in the linearly primitive collection described in
Appendix~\ref{sec:computations}.
\end{theorem}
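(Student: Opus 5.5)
This statement is quoted from the Holt--Yang classification~\cite{HY23}, so the proof reduces that theorem to the setting and conventions used here rather than re-deriving the classification. First I align hypotheses. A faithful irreducible primitive solvable linear group $H\le\GL(V)$ over $\mathbb{F}_p$ is quasi-primitive, since Clifford's theorem applied to a normal subgroup acting inhomogeneously would produce a system of imprimitivity. It is also nonmetacyclic by assumption, and it has no regular orbit. By the definition in the introduction, such an action is residual. Hence it appears in the list of residual actions in~\cite{HY23}.

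Next I compare that list with the collection in Appendix~\ref{sec:computations}. The classification in~\cite{HY23} is stated for quasi-primitive actions, which is a larger class. So the appendix collection will be described as the subfamily of their list whose members are linearly primitive. That subfamily consists of the groups with no system of imprimitivity, which can be checked mechanically for each listed pair $(H,V)$. It remains to verify two points. The first is that the list in~\cite{HY23} is complete up to linear equivalence, meaning conjugacy in $\GL(V)$, and not merely up to abstract isomorphism of $H$. The second is that the field is the prime field $\mathbb{F}_p$. If~\cite{HY23} works over general $\mathbb{F}_q$, I will view each entry as an $\mathbb{F}_p$-linear group by restriction of scalars. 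Primitivity, faithfulness and the regular-orbit property are all preserved or detected correctly under this restriction, since they depend only on the group of permutations of $V$ together with its additive structure.

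The step I expect to be the main obstacle is this bookkeeping between the two frameworks. One difficulty is the possible difference between ``quasi-primitive'' and ``primitive'': a module may be primitive over $\mathbb{F}_q$ but imprimitive when regarded over $\mathbb{F}_p$, or conversely. Another is making sure that no entry is lost through such reinterpretation. I would handle this by enumerating representatives in \GAP{} or \Magma{}, one for each conjugacy class in $\GL(n,p)$ of the groups listed in~\cite{HY23}. For each representative I would test irreducibility, primitivity, nonmetacyclicity and the absence of a regular orbit directly, and retain exactly those that pass. The statement then says precisely that this retained set is the collection in the appendix.
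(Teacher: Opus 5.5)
Your first step matches the paper. A primitive irreducible linear action is quasi-primitive, so the action is residual and falls under the Holt--Yang classification. Everything after that, however, rests on the list in \cite{HY23} being complete up to linear equivalence, and that is exactly what fails. As the appendix explains, the data accompanying \cite{HY23} omit twelve $\GL(8,3)$-conjugacy classes of residual actions, and nine of these are linearly primitive. The paper's collection consists of the $402$ Holt--Yang entries together with representatives of these nine classes, $411$ entries in all. Its completeness is taken from the reconstruction in \cite{ZhangRegularOrbits26}, not from \cite{HY23}.

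You correctly list the completeness of \cite{HY23} as something to verify, but nothing in your plan can verify it. Enumerating the groups that \cite{HY23} lists and filtering them by irreducibility, primitivity, nonmetacyclicity and absence of a regular orbit can only discard entries; it can never detect missing ones. Your retained set would therefore contain only the Holt--Yang entries, not the $411$-entry collection, so your closing identification of the two is false. More importantly, an action in one of the nine missing $\GL(8,3)$-classes satisfies every hypothesis of the theorem without being linearly equivalent to anything on your list. Closing the gap requires an independent completeness argument for the classification itself, meaning a re-derivation or verified reconstruction of the residual quasi-primitive actions. That is what \cite{ZhangRegularOrbits26} supplies and what the paper's proof cites.

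A smaller point: your claim that primitivity is preserved under restriction of scalars from $\mathbb{F}_q$ to $\mathbb{F}_p$ is wrong in general. An $\mathbb{F}_q$-system of imprimitivity is also an $\mathbb{F}_p$-system, but not conversely, and you contradict the claim yourself a few lines later. Your direct primitivity test over $\mathbb{F}_p$ would sidestep the issue, but the assertion should be removed.
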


\begin{proof}
Every primitive irreducible linear action is quasi-primitive
\cite[p.~27]{ManzWolf93}, so the classification of residual actions applies.
Holt and Yang~\cite{HY23} gave the original classification and accompanying
matrix data. The corrected Holt--Yang data are proved complete
in~\cite{ZhangRegularOrbits26} and described in
Appendix~\ref{sec:computations}.
\end{proof}

\begin{corollary}\label{cor:primitive-metacyclic}
Let $H$ be a finite solvable group acting faithfully, irreducibly and
primitively on a finite $\mathbb{F}_p$-space $V$. If $V$ contains no
abundant vector, then $H$ must be metacyclic.
\end{corollary}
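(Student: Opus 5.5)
We prove the contrapositive: suppose $H$ is solvable, acts faithfully, irreducibly and primitively on $V$, is not metacyclic, and show that $V$ contains an abundant vector. There are two cases, according to whether $H$ has a regular orbit.

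\emph{Regular orbit.} If some $v\in V$ satisfies $H_v=1$, then
$$\rho_H(v)=\frac{|H:1|}{\nu(1)}=|H|\ge1,$$
so $v$ is abundant and nothing more is needed.

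\emph{No regular orbit.} Then Theorem~\ref{thm:residual-classification} applies. The action of $H$ on $V$ is linearly equivalent to one of the finitely many actions in the linearly primitive collection of Appendix~\ref{sec:computations}. Linear equivalence is realised by an isomorphism $H\to H'$ together with a compatible linear bijection $V\to V'$. This pair carries stabilisers to stabilisers and preserves both indices and the invariant $\nu$. Hence $\rho_H(v)=\rho_{H'}(v')$ for corresponding vectors, and abundance transfers between equivalent actions.

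It therefore suffices to check that each representative in the collection has a vector $v$ with $|H:H_v|\ge\nu(H_v)$. This is a finite calculation of the kind described in the appendix. For each group one runs through its orbits on $V$, computes the stabiliser of a representative, computes $\nu$ of that stabiliser as the largest order of a nilpotent subgroup, and compares it with the orbit length. A convenient shortcut is to take the longest orbit first, since its stabiliser is smallest and its ratio is usually the largest.

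The main obstacle is this verification step. It cannot be done by hand and relies on the completeness of the corrected Holt--Yang data. There is also no single orbit length threshold that works uniformly: the Meng--Guo example in $\GL(4,3)$ shows that the square-root bound can fail. So the check has to compare $\nu(H_v)$ with $|v^H|$ for each group individually. Where computing $\nu$ directly is costly, I would use the bound $\nu(H_v)\le|H_v|$, which suffices whenever $|v^H|\ge|H_v|$. Only the remaining cases would need the exact value of $\nu(H_v)$, obtained from the conjugacy classes of nilpotent subgroups.
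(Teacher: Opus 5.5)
Your proposal is correct and follows the paper's route. The regular-orbit case is disposed of by $\rho_H(v)=|H|\ge1$, and the non-metacyclic case without a regular orbit is settled by the corrected Holt--Yang classification together with the finite certificate check, which the paper packages as Theorem~\ref{thm:residual}(1); its criteria (O) and (E) are exactly your bound $\nu(H_v)\le|H_v|$ and your exact computation of $\nu(H_v)$, and you simply re-derive that theorem's argument inline (including the invariance of $\rho$ under linear equivalence) rather than citing it.
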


\begin{proof}
If $H$ had a regular orbit on $V$, then some $v\in V$ would satisfy
$H_v=1$, and hence $\rho_H(v)=|H|$. Thus $v$ would be abundant. Therefore
$H$ has no regular orbit. If $H$ were not metacyclic,
Theorem~\ref{thm:residual}\textup{(1)} would again give an abundant vector.
Hence $H$ is metacyclic.
\end{proof}

\subsection{The metacyclic case}
\label{sec:semilinear}

We now consider faithful irreducible primitive metacyclic groups. We
first prove a theorem on regular pairs for primitive subgroups of
$\GammaL(1,Q)$, and then apply it to the metacyclic groups. Related
semilinear estimates are used again in Lemma~\ref{lem:branch-semilinear}.

Let $Q=p^{f}$ and identify the natural $f$-dimensional $\mathbb{F}_p$-space with the
field $\mathbb{F}_Q$. Write $A=\mathbb{F}_Q^{\times}$ for the group of scalar
transformations, and
$$
  \GammaL(1,Q)
  =\{z\mapsto az^\sigma\mid a\in A,\ 
      \sigma\in\Gal(\mathbb{F}_Q/\mathbb{F}_p)\}
  =A\rtimes\Gal(\mathbb{F}_Q/\mathbb{F}_p).
$$
This is the full semilinear group
of $\mathbb{F}_Q$. The notation $\Gamma(p^f)$ used in~\cite{HY23,Yang10,Yang11b,YangVV22} denotes the same group. For $K\leq\GammaL(1,Q)$, let $n_{\rm reg}(K)$ denote the
number of regular orbits of the diagonal action of $K$ on
$\mathbb{F}_Q\times\mathbb{F}_Q$.

The natural homomorphism with kernel $A$ is given by
$$
  \GammaL(1,Q)\longrightarrow\Gal(\mathbb{F}_Q/\mathbb{F}_p),
  \qquad (z\mapsto az^\sigma)\longmapsto\sigma.
$$

For a prime power $q$ and a positive integer $t$, let $R(q,t)$ denote the
number of regular orbits of $\Gal(\mathbb{F}_{q^t}/\mathbb{F}_q)$ on
$\mathbb{F}_{q^t}$.

\begin{lemma}\label{lem:semilinear-count}
Let $K\leq\GammaL(1,Q)$, where $Q=p^f$, and set $U=K\cap A$.
Let $C\cong K/U$ be the image of $K$ in
$\Gal(\mathbb{F}_Q/\mathbb{F}_p)$. Write $t=|C|$ and $q=p^{f/t}$. Then
$$
  n_{\rm reg}(K)\ \ge\ |A:U|\,R(q,t).
$$
\end{lemma}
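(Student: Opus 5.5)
The plan is to exhibit explicitly a $K$-invariant set of pairs in $\mathbb{F}_Q\times\mathbb{F}_Q$ on which $K$ acts semiregularly, and then count it. First, I will note that $C$ is the unique subgroup of order $t$ of the cyclic group $\Gal(\mathbb{F}_Q/\mathbb{F}_p)$. Hence $C=\Gal(\mathbb{F}_Q/\mathbb{F}_q)$ with $q=p^{f/t}$, and $R(q,t)$ is the number of regular $C$-orbits on $\mathbb{F}_Q$. Let $W\subseteq\mathbb{F}_Q$ be the union of these orbits, so $|W|=t\,R(q,t)$. Define
$$\mathcal S=\{(x,xw): x\in A,\ w\in W\}.$$
The parametrisation $(x,w)\mapsto(x,xw)$ is injective, so $|\mathcal S|=(Q-1)\,t\,R(q,t)$.

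Next I will check that $\mathcal S$ is $K$-invariant. The map $g\colon z\mapsto az^\sigma$ sends $(x,xw)$ to $(ax^\sigma,\ ax^\sigma w^\sigma)$. This pair has first entry $ax^\sigma\in A$ and ratio $w^\sigma$. The ratio lies in $W$, because $\sigma\in C$ and $C$ permutes its regular orbits.

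The main step is to show that every point of $\mathcal S$ has trivial stabiliser in $K$. Suppose $g\colon z\mapsto az^\sigma$ lies in $K$ and fixes $(x,xw)$. From $ax^\sigma=x$ we get $a=x^{1-\sigma}$. Substituting into the second coordinate gives $x^{1-\sigma}x^\sigma w^\sigma=xw$, that is, $w^\sigma=w$. Since $g\in K$, its Galois part $\sigma$ lies in $C$. As $w$ lies in a regular $C$-orbit, this forces $\sigma=1$. Then $a=x^{0}=1$, so $g$ is the identity.

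Thus every $K$-orbit in $\mathcal S$ is regular of length $|K|=|U|\,t$. Hence $\mathcal S$ is a union of exactly
$$\frac{(Q-1)\,t\,R(q,t)}{|U|\,t}=|A:U|\,R(q,t)$$
regular orbits, which gives $n_{\rm reg}(K)\ge|A:U|\,R(q,t)$.

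I do not expect a serious obstacle. The only point needing care is identifying the image $C$ with $\Gal(\mathbb{F}_Q/\mathbb{F}_q)$, so that "regular $C$-orbit on $\mathbb{F}_Q$" matches the definition of $R(q,t)$.
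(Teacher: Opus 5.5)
Your proposal is correct and is essentially the paper's proof. The paper also identifies $C=\Gal(\mathbb{F}_Q/\mathbb{F}_q)$ via cyclicity, and builds the same $K$-invariant sets $\{(x,\xi x):x\in A,\ \xi\in\mathcal R\}$ of regular pairs using the same cancellation to $\xi^\sigma=\xi$. The only difference is that it counts $|A:U|$ orbits separately for each regular $C$-orbit $\mathcal R$ and sums over the $R(q,t)$ orbits, whereas you take the union $W$ of all regular orbits at once.
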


\begin{proof}
The group $\Gal(\mathbb{F}_Q/\mathbb{F}_p)$ is cyclic of order $f$.
Consequently $t\mid f$, $Q=q^t$ and
$C=\Gal(\mathbb{F}_Q/\mathbb{F}_q)$. Thus $C$ has $R(q,t)$ regular orbits
on $\mathbb{F}_Q$. Let $\mathcal R$ be one of these orbits, and define
$$
  S_{\mathcal R}=\{(x,\xi x):x\in A,\ \xi\in\mathcal R\}.
$$
For $x\in A$ and $\xi\in\mathcal R$, suppose that an element
$z\mapsto az^\sigma$ of $K$ fixes both $x$ and $\xi x$. Then
$ax^\sigma=x$ and $a(\xi x)^\sigma=\xi x$. Since $\sigma$ is a field
automorphism, $(\xi x)^\sigma=\xi^\sigma x^\sigma$. As $a$, $x$ and
$x^\sigma$ are nonzero, the two equalities give
$$
  \frac{a\xi^\sigma x^\sigma}{ax^\sigma}=\frac{\xi x}{x},
$$
and hence $\xi^\sigma=\xi$. Since $\mathcal R$ is regular,
$\sigma=1$, and then $ax=x$ forces $a=1$. Hence $(x,\xi x)$ is a regular pair.

The set $S_{\mathcal R}$ is $K$-invariant: the map $z\mapsto az^\sigma$
sends $(x,\xi x)$ to
$(ax^\sigma,\xi^\sigma ax^\sigma)$, where $ax^\sigma\in A$ and
$\xi^\sigma\in\mathcal R$. Since
$\mathcal R$ is regular and $|C|=t$, the orbit--stabiliser theorem gives
$|\mathcal R|=t$. Hence $|S_{\mathcal R}|=|A|t$, and all its elements are
regular. Each of its $K$-orbits
therefore has $|K|=|U|t$ elements, so it contains $|A:U|$ orbits of regular
pairs. The sets $S_{\mathcal R}$ arising from distinct choices of
$\mathcal R$ are disjoint,
and summing over the $R(q,t)$ choices proves the result.
\end{proof}

\begin{lemma}\label{lem:R-small}
Let $q$ be a prime power and let $t\ge1$ be an integer. Then $R(q,t)<5$ only
for
$$
  (q,t)\in\{(2,1),(2,2),(2,3),(2,4),(3,1),(3,2),(4,1)\}.
$$
\end{lemma}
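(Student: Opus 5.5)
The plan is to count regular orbits of the cyclic group $C=\Gal(\mathbb{F}_{q^t}/\mathbb{F}_q)=\langle\phi\rangle$, where $\phi\colon z\mapsto z^q$, on $\mathbb{F}_{q^t}$ by Möbius inversion over the subfield lattice. An element $\xi\in\mathbb{F}_{q^t}$ has trivial stabiliser in $C$ exactly when it lies in no proper subfield $\mathbb{F}_{q^d}$ with $d\mid t$, $d<t$. Hence the number of such elements is
$$N(q,t)=\sum_{d\mid t}\mu(t/d)\,q^d,$$
and $R(q,t)=N(q,t)/t$, since each regular orbit has exactly $t$ elements. For $t=1$ this gives $R(q,1)=q$, which is less than $5$ exactly for $q\in\{2,3,4\}$, because $5$ is the next prime power. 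This settles the case $t=1$.

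For $t\ge2$ I would use the standard lower bound
$$N(q,t)\ \ge\ q^t-\sum_{d\mid t,\ d\le t/2}q^d\ \ge\ q^t-\sum_{j=1}^{\lfloor t/2\rfloor}q^j\ \ge\ q^t-\frac{q^{\lfloor t/2\rfloor+1}-q}{q-1}.$$
It then suffices to show that this quantity is at least $5t$ outside the listed pairs. I would split into cases on $q$.

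\emph{The case $q\ge5$.} Here $N\ge q^t-2q^{t/2}\ge q^{t}/2$ comfortably exceeds $5t$ for all $t\ge2$. For instance, at $t=2$ we have $N=q^2-q\ge20>10$, and the margin only grows with $t$.

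\emph{The case $q=4$.} At $t=2$ we get $N=12$, so $R=6$. At $t=3$ we get $N=60$, so $R=20$. For larger $t$ the general bound clearly suffices.

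\emph{The case $q=3$.} At $t=2$ we get $N=6$, so $R=3<5$, which is excluded. At $t=3$ we get $N=24$, so $R=8$. At $t=4$ we get $N=72$, so $R=18$. For $t\ge5$ the bound $3^t-\tfrac12 3^{t/2+1}>5t$ holds.

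\emph{The case $q=2$.} The exact values are $R(2,2)=1$, $R(2,3)=2$, $R(2,4)=3$, $R(2,5)=6$ and $R(2,6)=9$. For $t\ge7$ we have $2^t-2^{t/2+1}>5t$, since $2^7-2^{4.5}\approx105>35$ and the left side grows faster thereafter.

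The main obstacle is making the tail estimate rigorous uniformly. My approach would be a clean monotonicity argument: show that $N(q,t)/t$ is increasing in $t$ once $q^{t}\ge 2q^{t/2+1}$, or else simply check that $f(t)=q^t-2q^{\lfloor t/2\rfloor}-5t$ satisfies $f(t+1)>f(t)$ whenever $f(t)>0$. The remaining boundary cases are then the finite list computed above.
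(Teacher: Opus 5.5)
Your proof is correct and follows essentially the same route as the paper: Möbius inversion to get $R(q,t)=\frac1t\sum_{d\mid t}\mu(t/d)q^d$, the crude subfield bound $tR(q,t)\ge q^t-\sum_{e\le\lfloor t/2\rfloor}q^e$ for the tail, and a finite check of small cases. The only difference is bookkeeping. The paper splits by $t$, using closed formulas for $t\le4$ and then treating $q\ge3$ and $q=2$ separately for $t\ge5$. You split by $q$ instead, and compute $R(2,5)=6$ and $R(2,6)=9$ exactly rather than bounding them.
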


\begin{proof}
An element $\xi\in\mathbb{F}_{q^t}$ has trivial stabiliser in
$\Gal(\mathbb{F}_{q^t}/\mathbb{F}_q)$ precisely when it lies in no proper
intermediate subfield, or equivalently when
$\mathbb{F}_q(\xi)=\mathbb{F}_{q^t}$.
For each divisor $d$ of $t$, let $N_d$ be the number of elements
$\xi\in\mathbb{F}_{q^d}$ satisfying
$\mathbb{F}_q(\xi)=\mathbb{F}_{q^d}$. Every element of
$\mathbb{F}_{q^d}$ generates $\mathbb{F}_{q^e}$ over $\mathbb{F}_q$ for a
unique divisor $e$ of $d$, and hence
$$
  q^d=\sum_{e\mid d}N_e.
$$
M\"obius inversion gives
$$
  N_t=\sum_{d\mid t}\mu(t/d)q^d,
$$
where $\mu$ is the M\"obius function. Thus $N_t$ is the number of elements
with trivial stabiliser. Every regular orbit has length $t$, and therefore
$$
  R(q,t)=\frac{N_t}{t}
        =\frac1t\sum_{d\mid t}\mu(t/d)q^d.
$$

For $t=1$, we have $R(q,1)=q$. As $q$ is a prime power,
$R(q,1)<5$ precisely when $q\in\{2,3,4\}$. For $t=2,3,4$,
respectively,
$$
R(q,2)=\frac{q^2-q}{2},\qquad
R(q,3)=\frac{q^3-q}{3},\qquad
R(q,4)=\frac{q^4-q^2}{4}.
$$
Thus $R(q,2)<5$ precisely when $q\in\{2,3\}$, while
$R(q,3)<5$ and $R(q,4)<5$ precisely when $q=2$. For $t\ge5$, every element of
$\mathbb{F}_{q^t}$ that does not generate the extension over $\mathbb{F}_q$
lies in a subfield $\mathbb{F}_{q^{e}}$ with $e\mid t$ and $e\le t/2$. Hence
 $$
 tR(q,t)\ge q^{t}-\sum_{1\le e\le\lfloor t/2\rfloor}q^{e}.
 $$
 For $q\ge3$ the subtracted sum is below $q^{t}/2$, whence $tR(q,t)>q^{t}/2\ge5t$ and
$R(q,t)\ge5$. For $q=2$ the same bound gives
$$
 tR(2,t)\ge2^{t}-2^{\lfloor t/2\rfloor+1}+2.
$$
The difference
$$
 2^{t}-2^{\lfloor t/2\rfloor+1}+2-5t
$$
is positive for $t=5,6$ and increases thereafter separately along the odd and
even values of $t$. Hence $R(2,t)\ge5$ for all $t\ge5$.
\end{proof}

\begin{theorem}\label{thm:semilinear}
Let $K\le\GammaL(1,Q)$ act primitively on the
$\mathbb{F}_p$-space $\mathbb{F}_Q$. Then
$K$ has a
regular pair, and has at least five orbits of regular pairs except for the six groups
$$
  (Q,K)=(2,1),\ (3,C_2),\ (4,\GammaL(1,4)),\ (8,\GammaL(1,8)),\
  (9,\GammaL(1,9)),\ (16,\GammaL(1,16)),
$$
whose values of $n_{\rm reg}(K)$ are $4,4,1,2,3,3$ respectively.
\end{theorem}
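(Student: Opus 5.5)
The plan is to apply Lemma~\ref{lem:semilinear-count} and dispose of the few parameter values where its bound falls below five, using Lemma~\ref{lem:R-small}. Write $U=K\cap A$, $C\cong K/U$, $t=|C|$ and $q=p^{f/t}$. Lemma~\ref{lem:semilinear-count} gives $n_{\rm reg}(K)\ge|A:U|\,R(q,t)$. If $R(q,t)\ge5$ we are done. So by Lemma~\ref{lem:R-small} only $(q,t)\in\{(2,1),(2,2),(2,3),(2,4),(3,1),(3,2),(4,1)\}$ remain, that is, $Q\in\{2,4,8,16,3,9\}$ with $t$ as listed. In each such case $R(q,t)\ge1$, so $K$ always has a regular pair. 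For the count, whenever $|A:U|\,R(q,t)\ge5$ we are also done. Hence the work reduces to finitely many groups $K$ with $U$ of small index in $A$.

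Primitivity enters next. A subgroup of $\GammaL(1,Q)$ acting primitively (indeed irreducibly) on $\mathbb{F}_Q$ over $\mathbb{F}_p$ must satisfy $\mathbb{F}_p[U]$-type constraints. In particular, if $U$ is contained in a proper subfield's multiplicative group $\mathbb{F}_{Q_0}^\times$ and $C$ is small, the module is reducible or imprimitive. I would use this only to exclude degenerate small cases (for example $U=1$ in $Q=4$ with $t=1$). I would then enumerate case by case, computing the exact $n_{\rm reg}(K)$ via orbit counting on $\mathbb{F}_Q\times\mathbb{F}_Q$ rather than relying on the lower bound.

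\emph{$t=1$.} Here $K=U\le A$ acts semiregularly on $A\times\mathbb{F}_Q$. The regular pairs are exactly those with some nonzero coordinate, so $n_{\rm reg}=(Q^2-1)/|U|$. For $Q=2$ this gives $3$ for $K=1$, but the statement says $4$. Hence I must recheck whether $(0,0)$ counts; for $K=1$ every pair is regular, giving $Q^2=4$. The correct count is therefore $Q^2$ when $K=1$ and $(Q^2-1)/|U|$ otherwise. For $Q=3$, $U=C_2$ gives $8/2=4$. For $Q=4$ with $U=C_3$ we get $5$, and with $U=1$ we get $16$ (but this $K$ is not irreducible).

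\emph{$t\ge2$.} A pair is regular iff no nonidentity element fixes both coordinates. I would count fixed points of each element $z\mapsto az^\sigma$ on $\mathbb{F}_Q^2$ and use inclusion–exclusion (or Burnside on regular points) over minimal subgroups of $K$. The candidates are $Q=4,8,16,9$ with $K$ containing a generator of the relevant Galois subgroup. When $U$ is a proper subgroup of $A$, the factor $|A:U|$ already pushes the bound to at least $5$, except possibly for $Q=16,t=4$ with $|A:U|\le4$ and $Q=9,t=2$ with $|A:U|\le4$. Those residual subgroups must be checked individually; primitivity rules out the ones where $U$ lies in a subfield.

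The main obstacle is this last finite check. It requires exact computation of $n_{\rm reg}$ for $\GammaL(1,Q)$, $Q\in\{4,8,9,16\}$, obtaining $1,2,3,3$, and verifying that every proper primitive subgroup in these fields attains at least five. I would carry out the exact count via $|\{\text{regular pairs}\}|/|K|$. The number of regular pairs is $Q^2$ minus the union of fixed spaces $\mathrm{Fix}(g)^2$ over elements $g$ of prime order in $K$. Each $\mathrm{Fix}(g)$ is an $\mathbb{F}_{p}$-subspace (a subfield-coset for $a z^\sigma$), so the union is manageable by hand. A direct machine check is an acceptable fallback.
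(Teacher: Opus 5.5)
Your plan is essentially the paper's proof. Lemmas~\ref{lem:semilinear-count} and~\ref{lem:R-small} cut the problem down to the seven pairs $(q,t)$, and a short case analysis plus exact counts for the six named groups finishes it. Your observation that $R(q,t)\ge1$ already gives a regular pair for every $K$ is a valid shortcut over the paper's machine check, and your fixed-space count does reproduce $1,2,3,3$ for $\GammaL(1,Q)$ with $Q=4,8,9,16$.

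One bookkeeping correction. The proper-$U$ exceptions you flag for $Q=16$ and $Q=9$ are vacuous: there $R(q,t)=3$, so $|A:U|\ge2$ already gives at least $6$. The case the bound genuinely misses is $Q=4$, $t=2$, $U=1$, where the bound is only $3$. That is exactly where the paper invokes primitivity: the order-$2$ group $\langle z\mapsto az^2\rangle$ fixes $a^{-1}$ and so is reducible. Your final finite check would catch this case anyway, and in fact that group has $n_{\rm reg}=6$.
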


\begin{proof}
Suppose $n_{\rm reg}(K)<5$. Set $U=K\cap A$, and let $C$ be the
image of $K$ under the natural homomorphism to
$\Gal(\mathbb{F}_Q/\mathbb{F}_p)$. Let $\mathbb{F}_q$ be the subfield of
$\mathbb{F}_Q$ fixed pointwise by $C$, and set
$t=[\mathbb{F}_Q:\mathbb{F}_q]$. By Lemma~\ref{lem:semilinear-count},
$|A:U|R(q,t)<5$. Thus $R(q,t)<5$, so $(q,t)$ is one of the seven
pairs in Lemma~\ref{lem:R-small}.

If $t=1$, then $K=U\le A$. For $q=2$, the bound forces the trivial group on
$\mathbb{F}_2$. For $q=3$, it forces $U=A\cong C_2$. For $q=4$, it forces
$U=A\cong C_3$, but every ordered pair other than $(0,0)$ is then regular.
There are therefore $(16-1)/3=5$ orbits of regular pairs, contrary to
$n_{\rm reg}(K)<5$. If $(q,t)=(2,2)$, then $Q=4$ and
$A\cong C_3$. Suppose that $U=1$. Then $K$ has order $2$, and its
nonidentity element has the form $z\mapsto az^2$ for some
$a\in\mathbb{F}_4^\times$. This element fixes $a^{-1}$, since
$a(a^{-1})^2=a^{-1}$.
Thus $\{0,a^{-1}\}$ is a proper nonzero $K$-invariant
$\mathbb{F}_2$-subspace of $\mathbb{F}_4$, contrary to the primitivity of $K$. Therefore
$U=A$ and $K=\GammaL(1,4)$. The pairs
$(2,3),(2,4),(3,2)$ correspond to $Q=8,16,9$. In each,
$|A:U|\,R(q,t)\le n_{\rm reg}(K)<5$
forces $|A:U|<5/R(q,t)$, and as $|A:U|$ divides $|A|$ this leaves only $|A:U|=1$:
for $Q=8$, $|A|=7$ and $R=2$, whence $|A:U|<5/2$. For $Q=16$, $|A|=15$
and $R=3$, whence $|A:U|<5/3$. For $Q=9$, $|A|=8$ and $R=3$, whence
$|A:U|<5/3$. In every case, the next divisor of $|A|$ exceeds the bound.
Hence $U=A$ and $K$ is
$\GammaL(1,8),\GammaL(1,16),\GammaL(1,9)$ respectively. This leaves exactly
the six named groups. Their values of $n_{\rm reg}$ and the existence of a
regular pair in each are verified in Appendix~\ref{sec:comp-semilinear}.
\end{proof}

\begin{corollary}\label{cor:metacyclic}
Let $W$ be a finite $\mathbb{F}_p$-space and let $K\leq\GL(W)$ be
metacyclic, irreducible and primitive. Then $K$ has a regular pair on $W$.
\end{corollary}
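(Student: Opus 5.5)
The plan is to embed $K$ into a one-dimensional semilinear group and then apply Theorem~\ref{thm:semilinear}. The first step is the standard structure theory of primitive solvable linear groups (e.g.\ \cite[Section~2]{ManzWolf93}).

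Let $|W|=p^n$ and let $F=\mathbf{F}(K)$. Since $K$ is primitive, every abelian normal subgroup of $K$ acts homogeneously, and therefore is cyclic. As $K$ is metacyclic, it has a cyclic normal subgroup $N$ with $K/N$ cyclic. We may enlarge $N$ to a maximal abelian normal subgroup containing it; this is still cyclic.

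Let $E=\mathbb{F}_p[N]\subseteq\operatorname{End}(W)$. Because $N$ is cyclic and acts homogeneously, $E$ is a field. We argue that $W$ is one-dimensional over $E$. The reason is that $K$ normalises $E$, so $W$ is an $E$-vector space on which $K$ acts $E$-semilinearly. If $\dim_EW=s>1$, then $\mathbf{C}_K(N)$ would still be a normal subgroup acting on $W$ beyond $N$. By the Manz--Wolf structure theorem, the quotient $\mathbf{C}_K(N)/N$ would then involve a nontrivial extraspecial-type section of $F/\mathbf{Z}(F)$. This is impossible: $F$ is a metacyclic nilpotent group all of whose characteristic abelian subgroups are cyclic. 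The only noncyclic such $p$-groups are small $2$-groups such as $Q_8$ and $D_8$, and primitivity together with the metacyclic hypothesis has to be checked to exclude them. I expect this to be the main obstacle. If a $Q_8$-type section survives (for instance $Q_8\le\GL(2,3)$), I would treat those few groups separately by a direct regular-pair check in dimension $2$ over small fields.

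Granting that $W$ is one-dimensional over $E$, we have $W\cong\mathbb{F}_Q$ with $Q=p^n$, and $N\le\mathbb{F}_Q^\times$ acts by scalars. Every element of $K$ normalises $E$, and so acts as $z\mapsto az^\sigma$ for some $\sigma\in\Gal(\mathbb{F}_Q/\mathbb{F}_p)$. Hence $K\le\GammaL(1,Q)$ under this identification. The action is primitive by hypothesis, since primitivity is a property of the linear action on $W$, which is unchanged. Theorem~\ref{thm:semilinear} then gives a regular pair for $K$ on $\mathbb{F}_Q\times\mathbb{F}_Q$, which is a regular pair on $W$.

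In summary, the order of steps is: (i) establish the cyclic normal subgroup $N$ and the field $E$; (ii) show $\dim_EW=1$ using $\mathbf{C}_K(N)=N$ or treat the exceptional $2$-group cases directly; (iii) identify $K$ with a subgroup of $\GammaL(1,Q)$; (iv) invoke Theorem~\ref{thm:semilinear}.
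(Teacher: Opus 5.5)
Your outline matches the paper's: put $K$ inside $\GammaL(1,Q)$ and apply Theorem~\ref{thm:semilinear}. The embedding is the only nontrivial step beyond that theorem, and your step (ii) does not establish it. The paper gets it by quoting Kov\'acs's theorem: an abelian-by-nilpotent primitive solvable linear group normalises a Singer cycle, and the normaliser of a Singer cycle is $\GammaL(1,Q)$.

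Your substitute argument for step (ii) fails at several points.
\begin{enumerate}
\item You assert without proof that $\dim_EW>1$ forces $\mathbf{C}_K(N)>N$. In fact $\mathbf{C}_K(N)=N$ always holds here (see below), so this implication is equivalent to the claim you need. It is not a reduction.
\item Your list of noncyclic metacyclic $2$-groups with all characteristic abelian subgroups cyclic, ``small $2$-groups such as $Q_8$ and $D_8$'', is wrong. Dihedral, semidihedral and generalised quaternion groups of every order $2^n\ge16$ also qualify. So the fallback of checking a few groups in dimension $2$ has no finiteness basis.
\item The obstruction is misdiagnosed: a noncyclic Fitting subgroup does not prevent the embedding. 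Your example $Q_8\le\GL(2,3)$ is not exceptional. With $N=\langle i\rangle$, which acts irreducibly, you get $E=\mathbb{F}_9$ and $Q_8\le\GammaL(1,9)\cong SD_{16}$. Moreover $\GammaL(1,9)$ is itself primitive and metacyclic with noncyclic Fitting subgroup.
\end{enumerate}

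The missing idea is short. Take $A\ge N$ maximal abelian normal, so $A$ is cyclic, and set $E=\mathbb{F}_p[A]$. Since $N\le\mathbf{Z}(\mathbf{C}_K(A))$ and $\mathbf{C}_K(A)/N\le K/N$ is cyclic, $\mathbf{C}_K(A)$ is abelian. Hence $\mathbf{C}_K(A)=A$, and $K/A$ embeds in $\Gal(E/\mathbb{F}_p)$. Let $g\in K$ map to a generator $\sigma$ of the image, of order $c$, and let $E_0$ be the fixed field of $\langle\sigma\rangle$.
\begin{enumerate}
\item $g^c\in A$ commutes with $g$, so $g^c\in E_0$.
\item The $\mathbb{F}_p$-span $\sum_{i<c}Eg^i$ of $K$ in $\operatorname{End}(W)$ is a homomorphic image of the cyclic algebra $(E/E_0,\sigma,g^c)$.
\item That algebra is central simple of dimension $c^2$ over $E_0$, hence isomorphic to $M_c(E_0)$ by Wedderburn's theorem. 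Its unique simple module therefore has order $|E_0|^c=|E|$.
\item $W$ is a simple module for this algebra, because $K$ is irreducible. Thus $|W|=|E|$, i.e.\ $\dim_EW=1$.
\end{enumerate}
Your steps (iii) and (iv) then go through unchanged. With this argument, or with the citation of Kov\'acs's theorem, the proof is complete. Without it, step (ii) remains a genuine gap.
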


\begin{proof}
A metacyclic group is abelian-by-nilpotent. By a theorem of Kov\'acs
\cite[Theorem~5.3.1]{Short92}, $K$ therefore normalises a Singer cycle.
The normaliser of a Singer cycle is the semilinear group
\cite[Theorem~2.3.5]{Short92}. Hence Theorem~\ref{thm:semilinear} applies.
\end{proof}

\begin{theorem}\label{thm:primitive-action}
Let $H$ be a finite solvable group, and let $V$ be a faithful irreducible
primitive $H$-module. Then $V$ contains an abundant vector.
\end{theorem}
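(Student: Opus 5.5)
The plan is to combine Corollary~\ref{cor:primitive-metacyclic} with the metacyclic analysis. By that corollary, if $V$ contains no abundant vector then $H$ has no regular orbit and is metacyclic. The nonmetacyclic residual case is already handled by the finite verification invoked through Theorem~\ref{thm:residual-classification} and Theorem~\ref{thm:residual}(1). So it remains to treat a faithful irreducible primitive metacyclic $H\le\GL(V)$ and show directly that $V$ contains an abundant vector.

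For such $H$, Corollary~\ref{cor:metacyclic} supplies a regular pair $(x,y)\in V\times V$, that is, $H_x\cap H_y=1$. I then apply Lemma~\ref{lem:regular-pair} to the action of $H$ on the set $X=V$. It gives $\rho_H(x)\rho_H(y)\ge1$, so one of $x$ or $y$ is abundant. This contradicts the assumption and finishes the proof.

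Concretely, the write-up would run as follows. Suppose for a contradiction that $V$ has no abundant vector. Corollary~\ref{cor:primitive-metacyclic} forces $H$ to be metacyclic. Since $V$ is faithful, irreducible and primitive, $H$ embeds in $\GL(V)$ as a metacyclic, irreducible, primitive linear group. Corollary~\ref{cor:metacyclic} gives the regular pair, and Lemma~\ref{lem:regular-pair} produces an abundant vector, which is the contradiction.

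The only point needing care is that Corollary~\ref{cor:metacyclic} applies. It rests on Kov\'acs's theorem that $H$ normalises a Singer cycle, so $H$ is conjugate into $\GammaL(1,Q)$ with $Q=|V|$, and primitivity is inherited. Theorem~\ref{thm:semilinear} then gives a regular pair in every case, including the six small exceptional groups, for which $n_{\rm reg}(K)\ge1$ is verified in Appendix~\ref{sec:comp-semilinear}. All the substantive difficulty therefore sits in results already established: the residual classification and its finite checks, and the semilinear count. The present theorem is essentially bookkeeping.
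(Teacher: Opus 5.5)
Your proposal is correct and follows the paper's own proof essentially step for step. You assume no abundant vector, use Corollary~\ref{cor:primitive-metacyclic} to force $H$ to be metacyclic, take the regular pair from Corollary~\ref{cor:metacyclic}, and derive the contradiction from Lemma~\ref{lem:regular-pair}.
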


\begin{proof}
Suppose that $V$ contains no abundant vector. By
Corollary~\ref{cor:primitive-metacyclic}, $H$ is metacyclic.
Corollary~\ref{cor:metacyclic} gives a regular pair, and
Lemma~\ref{lem:regular-pair} then gives an abundant vector, a contradiction.
\end{proof}

\section{Local alternatives for imprimitive modules}
\label{sec:local-alternatives}

Theorem~\ref{thm:primitive-action} gives an abundant vector in every
faithful irreducible primitive module. For an imprimitive module, however,
an abundant vector for the group induced on one
summand is not enough: Theorem~\ref{thm:coordinate} involves every bottom
group arising from a coordinate chain. We therefore need a pair of local
orbits that works uniformly along every coordinate chain, without assuming
that the top group has a regular orbit on the power set of the index set. Such
a regular orbit need not exist for an arbitrary solvable transitive
group~\cite{Glu25}.

\begin{definition}\label{def:balanced-pair}\label{def:chain-label-linear}
Let $W$ be a finite vector space and let $J\leq\GL(W)$ be solvable and
primitive. We call an ordered pair $(\orbit_0,\orbit_1)$ of distinct
$J$-orbits on $W$ a {\it balanced pair} for $J$ if, for every chain
$L\trianglelefteq K\trianglelefteq J$, there exist $a\in\orbit_0$ and
$b\in\orbit_1$ such that $\rho_L(a)\rho_L(b)\ge1$. We call a $J$-orbit
$\orbit$ on $W$ a {\it chain-abundant orbit} if, for every chain
$L\trianglelefteq K\trianglelefteq J$, there exists $a\in\orbit$ such that
$\rho_L(a)\ge1$. In both conditions, the orbits are fixed, but the
representatives may vary with the chain.
\end{definition}

\begin{lemma}\label{lem:balanced-sources}
Let $W$ be a finite vector space and let $J\leq\GL(W)$ be solvable and
primitive. The following constructions give balanced pairs for $J$:
\begin{enumerate}
\item Any two distinct chain-abundant orbits $\orbit_0\ne\orbit_1$ form a
balanced pair.
\item If $v\in W$ is a nonzero regular vector, then $(v^J,\{0\})$ is a
balanced pair.
\item If $(a,b)\in W\times W$ is a regular pair and $a,b$ lie in distinct
$J$-orbits, then $(a^J,b^J)$ is a balanced pair.
\end{enumerate}
\end{lemma}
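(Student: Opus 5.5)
Each of the three parts reduces to checking the defining condition of a balanced pair directly: fix an arbitrary chain $L\trianglelefteq K\trianglelefteq J$ and exhibit representatives $a\in\orbit_0$, $b\in\orbit_1$ with $\rho_L(a)\rho_L(b)\ge1$. Distinctness of the two orbits must also be checked in each case. The key fact used throughout is that $L\le J$, so $L_v=L\cap J_v$ for every $v\in W$. Hence triviality of stabilisers in $J$ passes down to $L$.

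For (1), distinctness is assumed. Given a chain, chain-abundance of $\orbit_0$ and $\orbit_1$ provides $a\in\orbit_0$ and $b\in\orbit_1$ with $\rho_L(a)\ge1$ and $\rho_L(b)\ge1$. The product of these two ratios is then at least $1$.

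For (2), the orbits $v^J$ and $\{0\}$ are distinct because $v\ne0$. Take $a=v$ and $b=0$. Since $J_v=1$, we get $L_v=1$, so $\rho_L(v)=|L|$. On the other hand $L_0=L$, so $\rho_L(0)=1/\nu(L)$. The product is $|L|/\nu(L)\ge1$, because a nilpotent subgroup of $L$ has order at most $|L|$.

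For (3), the orbits $a^J$ and $b^J$ are distinct by hypothesis. We use the fixed representatives $a$ and $b$ for every chain. Since $J_a\cap J_b=1$, we also have $L_a\cap L_b=1$. Thus $(a,b)$ is a regular pair for the action of $L$ on $W$. Lemma~\ref{lem:regular-pair}, applied to $L$, gives
$$|L:L_a|\,|L:L_b|\ge\nu(L_a)\nu(L_b),$$
which is exactly $\rho_L(a)\rho_L(b)\ge1$.

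I expect no real obstacle. The only point needing care is that the regular-pair and regular-vector hypotheses are stated for $J$ but must be applied to the subgroup $L$, and restriction handles this.
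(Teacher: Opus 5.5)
Your proposal is correct and follows essentially the same route as the paper. In each part you restrict to the bottom group $L$ via $L_v=L\cap J_v$, and in (3) you apply Lemma~\ref{lem:regular-pair} to $L$; you also check distinctness of the two orbits, just as the paper does.
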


\begin{proof}
In~(1), for every chain there exist $a\in\orbit_0$ and $b\in\orbit_1$ such
that $\rho_L(a)\ge1$ and $\rho_L(b)\ge1$. Hence the product is at least $1$. In~(2), for the bottom group $L$ of any chain the
stabiliser $L_v=L\cap J_v=1$. Hence $\rho_L(v)=|L|$ and
$\rho_L(v)\,\rho_L(0)=|L|/\nu(L)\ge1$. In~(3), for the bottom group $L$ of any chain we have
$L_a\cap L_b=L\cap J_a\cap J_b=1$. Hence $(a,b)$ is a regular pair for $L$ and
$\rho_L(a)\,\rho_L(b)\ge1$ by Lemma~\ref{lem:regular-pair}. In each case
the two orbits are distinct.
\end{proof}

Let $W$ be a finite vector space and let $J\leq\GL(W)$ be solvable and
primitive. For each chain $L\trianglelefteq K\trianglelefteq J$, define
$z_L=\rho_L(0)=1/\nu(L)$ and $n_L=\max_{0\ne w\in W}\rho_L(w)$. Thus $z_L$
is the abundance ratio of the zero vector, while $n_L$ is the largest
abundance ratio among the nonzero vectors of $W$.

\begin{theorem}\label{thm:local-alternatives}
Let $J$ be a nontrivial faithful irreducible primitive solvable linear group on
$W$. Then either $J$ has a balanced pair, or $J$ is metacyclic with no regular
vector, or the action of $J$ on $W$ is linearly equivalent to one of the nine residual actions of
Theorem~\ref{thm:residual}\textup{(2)}. In the last case,
$z_Ln_L^3\geq1$ for every chain $L\trianglelefteq K\trianglelefteq J$.
\end{theorem}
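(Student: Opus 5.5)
The plan is a case split on whether $J$ has a regular orbit on $W$ and on whether $J$ is metacyclic, feeding each case into one of the constructions of Lemma~\ref{lem:balanced-sources}.

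\emph{$J$ has a regular vector.} Suppose some $v\in W$ satisfies $J_v=1$. Since $J$ is nontrivial, $v\neq0$. Lemma~\ref{lem:balanced-sources}(2) then gives the balanced pair $(v^J,\{0\})$.

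\emph{$J$ is metacyclic with no regular vector.} This is the second alternative, so there is nothing to prove.

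\emph{$J$ is nonmetacyclic with no regular vector.} By Theorem~\ref{thm:residual-classification}, the action is linearly equivalent to a member of the linearly primitive residual collection of Appendix~\ref{sec:computations}. This is a finite list, and we treat it computationally, in the same way Lemma~\ref{lem:deg5789} treats small permutation groups. For each residual action $J$ we enumerate every subgroup $L$ that occurs as the bottom group of a chain $L\trianglelefteq K\trianglelefteq J$, and every $J$-orbit on $W$. We then search for a balanced pair, trying three sources in order:
\begin{enumerate}
\item two distinct chain-abundant orbits, as in Lemma~\ref{lem:balanced-sources}(1);
\item a regular pair $(a,b)$ with $a^J\ne b^J$, as in Lemma~\ref{lem:balanced-sources}(3);
\item directly, a pair of orbits $(\orbit_0,\orbit_1)$ such that for every $L$ there are $a\in\orbit_0$ and $b\in\orbit_1$ with $\rho_L(a)\rho_L(b)\ge1$.
\end{enumerate}
The claim is that this search succeeds for every residual action except the nine listed in Theorem~\ref{thm:residual}(2). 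For those nine we compute, for every chain, the quantities $z_L=1/\nu(L)$ and $n_L=\max_{w\ne0}\rho_L(w)$, and check that $n_L^3\ge\nu(L)$.

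A conceptual remark makes this last check plausible. When $L$ is small, any nonzero $w$ gives $\rho_L(w)\ge|L:L_w|/|L_w|$, which tends to be large. When $L=J$, the nonabundance of every vector is exactly what fails, yet we only need the weaker bound $n_J\ge\nu(J)^{1/3}$. Even so, the check must be carried out by machine.

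\emph{Main obstacle.} The hard part is the computational step for the nonmetacyclic residual actions. It requires controlling every possible bottom group $L$ of a chain (subnormal of depth two in $J$) and computing $\nu$ of the point stabilisers $L_w$. Two points need care. First, the chains must be enumerated up to $J$-conjugacy: conjugating $L$ by $g\in J$ permutes representatives inside each $J$-orbit, so both the balanced-pair condition and the value of $n_L$ are invariant. Second, we must confirm that exactly nine actions resist all three balanced-pair constructions. I would organise the computation by first testing Lemma~\ref{lem:balanced-sources}(1) and (3), which are cheap, and running the full chain-by-chain orbit-pair search only on the survivors. The check $z_Ln_L^3\ge1$ is carried out only on the actions where even that search fails.
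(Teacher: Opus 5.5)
Your proposal is correct and matches the paper's proof: the same three-way split (a regular vector, necessarily nonzero, gives the balanced pair $(v^J,\{0\})$ by Lemma~\ref{lem:balanced-sources}(2); the metacyclic case is the second alternative; the nonmetacyclic case without a regular vector is settled by the finite verification behind Theorem~\ref{thm:residual}(2)). The only difference is that you describe redoing that computation yourself, adding an unneeded third ``direct'' search, whereas the paper simply cites Theorem~\ref{thm:residual}(2). The appendix proof of that theorem uses exactly your sources (1) and (3) and checks the cubic condition on the nine remaining actions.
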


\begin{proof}
If $J$ has a regular vector $v$, then Lemma~\ref{lem:balanced-sources}(2)
makes $(v^J,\{0\})$ a balanced pair. We may therefore suppose that $J$ has no
regular vector and is not metacyclic. By
Theorem~\ref{thm:residual}\textup{(2)}, either $J$ has a balanced pair,
or its action on $W$ is linearly equivalent to one of the nine residual actions and
$z_Ln_L^3\geq1$ for every chain $L\trianglelefteq K\trianglelefteq J$.
\end{proof}

\section{Imprimitive modules}
\label{sec:imprimitive}

\begin{definition}\label{def:unrefinable}
Let $V$ be a finite vector space and let $H\leq\GL(V)$. A {\it system of
imprimitivity} for $H$ on $V$ is a decomposition
$V=W_1\oplus\cdots\oplus W_m$ into nonzero subspaces such that $H$ permutes
$\{W_1,\ldots,W_m\}$
transitively. The system is {\it nontrivial} if $m>1$. It is {\it
unrefinable} if there is no system of imprimitivity
$V=U_1\oplus\cdots\oplus U_n$ with $n>m$ such that every $U_j$ is
contained in some $W_i$.
\end{definition}

\begin{lemma}\label{lem:unrefinable-blocks}
Let $H$ be a finite group acting faithfully, irreducibly and imprimitively
on a finite $\mathbb{F}_p$-space $V$, and let
$V=W_1\oplus\cdots\oplus W_m$ be a nontrivial unrefinable system of
imprimitivity. Let $J$ be the group induced on $W_1$ by its setwise
stabiliser $H_{W_1}$. Then $J$ is nontrivial, faithful, irreducible and
primitive.
\end{lemma}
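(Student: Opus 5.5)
We must show four properties of $J$: it is faithful, nontrivial, irreducible and primitive. Faithfulness is by definition: $J$ is the image of $H_{W_1}$ in $\GL(W_1)$, so we regard it as a subgroup of $\GL(W_1)$, and it acts faithfully on $W_1$. The substance lies in the other three, which we take in the order irreducibility, primitivity, nontriviality.

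\emph{Irreducibility.} Suppose $0\ne U<W_1$ is an $H_{W_1}$-invariant subspace. Let $t_1=1,\dots,t_m$ be the transversal with $W_1^{t_i}=W_i$ from Section~\ref{sec:coordinate}. Since $t_i$ is determined modulo $H_{W_1}$, the subspace $U^{t_i}\subseteq W_i$ is well defined. The sum $\bigoplus_i U^{t_i}$ is then $H$-invariant: an element $h$ carrying $W_i$ to $W_j$ satisfies $t_ih t_j^{-1}\in H_{W_1}$. This sum is a proper nonzero subspace of $V$, contradicting the irreducibility of $H$. This is the standard induced-module argument, $V\cong W_1\uparrow_{H_{W_1}}^H$.

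\emph{Primitivity.} This is the key step, and it is where unrefinability is used. Suppose $W_1=Y_1\oplus\cdots\oplus Y_k$ is a system of imprimitivity for $J$ with $k>1$; equivalently, $H_{W_1}$ permutes the $Y_j$ transitively. Transport this decomposition along the $t_i$ and set $U_{i,j}=Y_j^{t_i}$. Then $V=\bigoplus_{i,j}U_{i,j}$ has $mk>m$ summands, and each summand lies in some $W_i$. It remains to check that $H$ permutes the family $\{U_{i,j}\}$, and transitively. Take $h\in H$ with $W_i^h=W_l$. Then $g=t_iht_l^{-1}\in H_{W_1}$ permutes the $Y_j$, so $U_{i,j}^h=Y_j^{gt_l}$ is again one of the $U_{l,\cdot}$. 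Hence the family is permuted. Transitivity follows because $H$ is transitive on the indices $i$ and $H_{W_1}$ is transitive on the $Y_j$. We have thus produced a strictly finer system of imprimitivity, contradicting unrefinability. The one point to verify carefully is that the family is independent of the choice of $t_i$. This holds because two choices differ by an element of $H_{W_1}$, which permutes the $Y_j$.

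\emph{Nontriviality.} If $J=1$, then irreducibility forces $\dim W_1=1$ and $W_1\cong\mathbb{F}_p$ with trivial action. The nonzero vectors of $W_1$ are then fixed by $H_{W_1}$. If $p>2$, the decomposition $W_1=\mathbb{F}_p$ is itself a system of imprimitivity and causes no contradiction, so a different argument is needed. Note also that a trivial $J$ on a one-dimensional space counts as primitive and irreducible. For $p=2$, we have $W_1=\{0,w\}$ with $w$ fixed by $H_{W_1}$. The vector $\sum_i w^{t_i}$ is then $H$-fixed, so it spans a one-dimensional $H$-submodule of $V$. Since $m>1$, this submodule is proper, contradicting irreducibility. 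For general $p$, the same vector $\sum_i w^{t_i}$ is fixed by $H$, again giving a proper invariant line because $m>1$. Thus nontriviality also follows from the irreducibility of $V$: if $J=1$, the fixed vector $w$ of $W_1$ induces an $H$-fixed nonzero vector of $V$, whose span is a proper submodule.
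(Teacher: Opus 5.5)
Your proof is correct and follows essentially the same route as the paper. Irreducibility comes from transporting an invariant subspace along the $t_i$, primitivity from transporting a system of imprimitivity of $J$ to a strictly finer one, and nontriviality from the $H$-fixed nonzero vector $\sum_i w^{t_i}$ spanning a proper line. In your last paragraph, the reduction to $\dim W_1=1$ and the case split on $p$ are unnecessary. The fixed-vector argument works uniformly and needs no prior irreducibility of $J$; this is how the paper does it.
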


\begin{proof}
The action of $J$ on $W_1$ is faithful by definition. For each $i$, choose
$t_i\in H$ with $W_1^{t_i}=W_i$, taking $t_1=1$.

Suppose first that $J=1$. Then $H_{W_1}$ fixes $W_1$ pointwise. Choose
$0\ne w_1\in W_1$ and set $w_i=w_1^{t_i}$. If $t_i'\in H$ also carries
$W_1$ to $W_i$, then $t_i(t_i')^{-1}\in H_{W_1}$, and hence
$w_1^{t_i}=w_1^{t_i'}$. Thus $w_i$ is independent of the choice of $t_i$.
If $W_i^h=W_j$, then both $t_ih$ and $t_j$ carry $W_1$ to $W_j$, so
$w_i^h=w_j$. Therefore $H$ fixes
$w=w_1+\cdots+w_m$. Since $W_1\oplus\cdots\oplus W_m$ is a direct sum,
$w\ne0$. As $m>1$, the line $\mathbb{F}_p w$ is a proper $H$-submodule of
$V$. This contradicts the irreducibility of $V$, so $J\ne1$.

Suppose that $0<U<W_1$ is $J$-invariant, and set $U_i=U^{t_i}$. The same
argument shows that $U_i$ is independent of the choice of $t_i$ and that
$H$ permutes $U_1,\ldots,U_m$. Hence
$U_1\oplus\cdots\oplus U_m$ is a proper nonzero $H$-submodule of $V$,
again contradicting irreducibility. Therefore $J$ is irreducible.

Finally, suppose that $J$ is imprimitive, and let
$W_1=U_1\oplus\cdots\oplus U_s$ be a nontrivial system of imprimitivity
for $J$. For each $i$, transport
this decomposition to $W_i$ using $t_i$. The resulting decomposition of
$W_i$ is independent of the choice of $t_i$, since $H_{W_1}$ permutes
$U_1,\ldots,U_s$. We obtain
$$
  V=\bigoplus_{i=1}^m\bigoplus_{j=1}^s U_j^{t_i}.
$$
Since $H$ is transitive on $\{W_1,\ldots,W_m\}$ and $J$ is transitive on
$\{U_1,\ldots,U_s\}$, the group $H$ permutes the $ms$ summands in this
decomposition transitively.
This is a system of imprimitivity that strictly refines
$W_1\oplus\cdots\oplus W_m$, contrary to unrefinability. Thus $J$ is
primitive.
\end{proof}

For the next three lemmas, let $H$ be a finite solvable group acting
faithfully, irreducibly and imprimitively on a finite $\mathbb{F}_p$-space
$V$. Fix a nontrivial unrefinable system
$V=W_1\oplus\cdots\oplus W_m$, set $I=\{1,\ldots,m\}$, where $i$
indexes $W_i$, and let $P\leq\Sym(I)$ be the group induced by $H$ on
$I$. Let $J$ be the group induced on $W_1$ by its setwise stabiliser
$H_{W_1}$. Then $P$ is solvable and transitive. The group $J$ is
solvable, and Lemma~\ref{lem:unrefinable-blocks} shows that it is
nontrivial, faithful, irreducible and primitive.

For each $i\in I$, choose $t_i\in H$ with $W_1^{t_i}=W_i$, taking
$t_1=1$. The map $w\mapsto w^{t_i^{-1}}$ from $W_i$ to $W_1$
identifies the local group $J_i$ with $J$. Under this identification, we
write each coordinate chain as
$L_i\trianglelefteq K_i\trianglelefteq J$, and abundance ratios are
unchanged. In coordinate $i$, we use the same symbol $\orbit$ for the
$J$-orbit on $W_1$ and its transported orbit $\orbit^{t_i}$ on $W_i$.
For every chain $L\trianglelefteq K\trianglelefteq J$, set
$z_L=\rho_L(0)=1/\nu(L)$ and
$n_L=\max_{0\ne w\in W_1}\rho_L(w)$.

\begin{lemma}[Balanced case]\label{lem:branch-balanced}
If $J$ has a balanced pair, then $V$ contains an abundant vector.
\end{lemma}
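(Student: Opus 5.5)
We combine Theorem~\ref{thm:weighted} for the top group $P$ on $I$ with Theorem~\ref{thm:coordinate}, in the same way as Lemma~\ref{lem:cubic-pair-transfer}. Let $(\orbit_0,\orbit_1)$ be a balanced pair for $J$. For each coordinate $i$ with bottom group $L_i$ define
$$
 z_i=\max_{a\in\orbit_0}\rho_{L_i}(a),\qquad n_i=\max_{b\in\orbit_1}\rho_{L_i}(b),
$$
using the transported orbits in coordinate $i$. The balanced condition supplies $a\in\orbit_0$ and $b\in\orbit_1$ with $\rho_{L_i}(a)\rho_{L_i}(b)\ge1$, so $z_in_i\ge1$.

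The weighted theorem needs the cubic condition $z_in_i^3\ge1$, and $z_in_i\ge1$ does not give this when $n_i<1$. The plan is to arrange $n_i\ge1$ or use symmetry. Theorem~\ref{thm:weighted} may be applied with the roles of the two orbits interchanged: since $P_\Gamma=P_{I\setminus\Gamma}$, relabelling $\Gamma$ by its complement makes the weight $n_i$ attach to membership and $z_i$ to non-membership. So the weights may be taken as $(z_i,n_i)$ or $(n_i,z_i)$, and the cubic condition holds for one of these orders whenever $z_in_i\ge1$ and one of the two is at least $1$. I would first try to show that $\max\{z_i,n_i\}\ge1$ for every $i$. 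This follows from $z_in_i\ge1$, because both numbers are positive and their product is at least $1$.

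The remaining difficulty is that the correct orientation may differ from coordinate to coordinate. The weighted theorem needs a single assignment of ``$z$ for members'' and ``$n$ for non-members'', but the labels are ours to choose per coordinate only if the choice is $P$-invariant. The colour argument of Lemma~\ref{lem:fine-label} needs the two orbit families to be distinct, and it does not matter which is which at each coordinate, provided we still get $T_x\le P_\Gamma$. I therefore expect a fully uniform treatment to be problematic.

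The cleaner alternative, which I would adopt, is to reduce to a weighted statement for the condition $z_in_i\ge1$ directly. Rewrite $z_in_i\ge1$ as $z_i'=z_in_i^{-1}\cdot n_i^{\,2}$ \dots; this does not simplify. So instead I would apply Theorem~\ref{thm:top-hni} to obtain $\Gamma$ with $\rho_P(\Gamma)\ge1$, and then average as in Lemma~\ref{lem:symmetrise}. For the pair of tuples placing $\orbit_0$ on $\Gamma$ and $\orbit_1$ off $\Gamma$, and the swapped tuple, the top factors are both at least $\rho_P(\Gamma)\ge1$ by Lemma~\ref{lem:fine-label}, since $T\le P_\Gamma=P_{\Gamma^c}$. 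The local products are $\prod_{\Gamma}z_i\prod_{\Gamma^c}n_i$ and $\prod_{\Gamma}n_i\prod_{\Gamma^c}z_i$, whose product is $\prod_i z_in_i\ge1$. Hence one of them is at least $1$, exactly as in Lemma~\ref{lem:c2-balance}.

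Theorem~\ref{thm:coordinate} then gives an abundant coordinate tuple, which corresponds to an abundant vector. The step to check carefully is the colour argument: the tuple takes values in the two distinct $J$-orbits $\orbit_0,\orbit_1$, so it has exactly two colours, since distinct $J$-orbits on $W_1$ give distinct $H$-orbits on $\mathcal X$. The case $\Gamma=\varnothing$ or $\Gamma=I$ is excluded because $\rho_P(\varnothing)<1$ for nontrivial $P$.
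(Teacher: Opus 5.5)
Your final argument is correct and is essentially the paper's proof: take $\Gamma$ from Theorem~\ref{thm:top-hni}, form the tuple with $\orbit_0$ on $\Gamma$ and $\orbit_1$ off $\Gamma$ and also its swap, bound both top factors using $T_x\le P_\Gamma=P_{\Gamma^c}$, and observe that the two local products multiply to $\prod_i z_in_i\ge1$. The paper uses the balanced-pair representatives directly rather than your maxima $z_i,n_i$, which makes no difference, and your detour through Theorem~\ref{thm:weighted} and the cubic condition is unnecessary and can be deleted.
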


\begin{proof}
Let $(\orbit_0,\orbit_1)$ be a balanced pair for $J$. By Theorem~\ref{thm:top-hni}
choose $\Gamma\subseteq I$ with $\rho_P(\Gamma)\ge1$. Since $P$ is
transitive on $I$ and $m>1$, it is nontrivial and
$\rho_P(\varnothing)=\rho_P(I)=1/\nu(P)<1$. Therefore $\Gamma$ is
neither empty nor all of $I$. For each coordinate $i\in I$, with bottom group
$L_i$ in its coordinate chain, pick representatives $a_i\in\orbit_0$ and
$b_i\in\orbit_1$ with $\rho_{L_i}(a_i)\,\rho_{L_i}(b_i)\ge1$. Consider two
choices of local vectors. In the first, use $a_i$ when $i\in\Gamma$ and
$b_i$ when $i\notin\Gamma$. In the second, use $b_i$ when $i\in\Gamma$
and $a_i$ when $i\notin\Gamma$. The corresponding products of
local ratios,
$$
  \prod_{i\in\Gamma}\rho_{L_i}(a_i)\prod_{i\notin\Gamma}\rho_{L_i}(b_i)
  \qquad\text{and}\qquad
  \prod_{i\in\Gamma}\rho_{L_i}(b_i)\prod_{i\notin\Gamma}\rho_{L_i}(a_i),
$$
multiply to $\prod_{i\in I}\rho_{L_i}(a_i)\,\rho_{L_i}(b_i)\ge1$. Hence at least one
of them is $\ge1$. Let $x$ be the corresponding global vector. By
Section~\ref{sec:coordinate}, the entries chosen from the distinct
$J$-orbits $\orbit_0$ and $\orbit_1$ have different colours. In the first
choice, entries from $\orbit_0$ occur precisely at the coordinates indexed
by $\Gamma$. In the second choice, they occur precisely at the coordinates
indexed by $\Gamma^c$. In either case write $T=T_x$. It preserves $\Gamma$
in the first choice and $\Gamma^c$ in the second. Since
$P_\Gamma=P_{\Gamma^{c}}$, we have $T\le P_\Gamma$ in
either case and, by~\eqref{eq:top-monotone},
$|P:T|/\nu(T)\ge\rho_P(\Gamma)\ge1$. The coordinate chain
inequality~\eqref{eq:coord-clean} then gives
$\rho_H(x)\ge(|P:T|/\nu(T))\prod_i\rho_{L_i}(x_i)\ge1$.
\end{proof}

\begin{lemma}[Cubic condition]\label{lem:branch-profile}
If $z_{L_i}n_{L_i}^{3}\geq1$ for every $i\in I$, then $V$ contains an
abundant vector.
\end{lemma}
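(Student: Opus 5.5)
We apply the weighted transitive theorem, Theorem~\ref{thm:weighted}, to the top group $P$ acting on $I$, with weights read off from the local chains. For each $i\in I$ set $z_i=z_{L_i}=1/\nu(L_i)$ and $n_i=n_{L_i}=\max_{0\ne w\in W_1}\rho_{L_i}(w)$. These are positive reals, and the hypothesis is exactly the cubic condition $z_in_i^3\ge1$. Since $P$ is solvable and transitive, Theorem~\ref{thm:weighted} supplies $\Gamma\subseteq I$ with
$$
  \rho_P(\Gamma)\prod_{i\in\Gamma}z_i\prod_{i\notin\Gamma}n_i\ge1 .
$$

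Next we build the vector. For $i\in\Gamma$ take $x_i=0\in W_i$. For $i\notin\Gamma$ take $x_i$ to be a nonzero vector of $W_i$ attaining $n_{L_i}$, transported via $t_i$; the transport does not change abundance ratios. Then $\rho_{L_i}(x_i)=z_i$ for $i\in\Gamma$ and $\rho_{L_i}(x_i)=n_i$ for $i\notin\Gamma$. Let $x=\sum_i x_i$.

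It remains to compare the colour stabiliser $T_x$ with $P_\Gamma$. The tagged zero vectors form a single $H$-orbit on $\mathcal X$, and no nonzero vector lies in it. Hence the coordinates of colour ``zero'' are exactly those in $\Gamma$. If $\Gamma$ is empty or all of $I$, then $T_x\le P=P_\Gamma$ trivially. Otherwise the colour classes refine the partition $\{\Gamma,\Gamma^c\}$, so every $g\in T_x$ maps $\Gamma$ to itself and $T_x\le P_\Gamma$. The nonzero entries may carry several colours, which only makes $T_x$ smaller, and that is harmless. By \eqref{eq:top-monotone}, $|P:T_x|/\nu(T_x)\ge\rho_P(\Gamma)$.

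Finally, Theorem~\ref{thm:coordinate} gives
$$
  \rho_H(x)\ge\rho_P(\Gamma)\prod_{i\in\Gamma}z_i\prod_{i\notin\Gamma}n_i\ge1,
$$
so $x$ is an abundant vector.

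There is no real obstacle here, since the needed work was done in Theorem~\ref{thm:weighted}. The only point needing care is the colour comparison. It relies on the zero vector forming its own $J$-orbit, which is distinct from every orbit of nonzero vectors, so that $T_x$ stabilises $\Gamma$ even when the nonzero entries fall into several orbits. The degenerate cases $\Gamma=\varnothing$ and $\Gamma=I$ need no separate argument, because the weighted theorem already accounts for them.
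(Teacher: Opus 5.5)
Your proof is correct and follows the paper's own argument. Both apply Theorem~\ref{thm:weighted} to $P$ with $z_i=z_{L_i}$ and $n_i=n_{L_i}$, put zero entries on $\Gamma$ and $n_{L_i}$-maximising nonzero entries elsewhere, note that $T_x\le P_\Gamma$ because zero and nonzero entries have different colours, and conclude via Theorem~\ref{thm:coordinate} and \eqref{eq:top-monotone}.
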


\begin{proof}
Apply Theorem~\ref{thm:weighted} to $P$, taking $z_i=z_{L_i}$ and
$n_i=n_{L_i}$ for each coordinate $i$, to obtain $\Gamma\subseteq I$ with
$\rho_P(\Gamma)\prod_{i\in\Gamma}z_{i}\prod_{i\notin\Gamma}n_{i}\ge1$. Place
the zero vector on the summands indexed by $\Gamma$, and on every other
summand choose a nonzero vector attaining $n_{L_i}$. Let $x$ be their sum and
write $T=T_x$. If $\Gamma=\varnothing$ or $\Gamma=I$, then
$T\leq P_\Gamma=P$. Otherwise a zero entry and a nonzero entry have different
colours, so again $T\leq P_\Gamma$. Hence
Theorem~\ref{thm:coordinate} and \eqref{eq:top-monotone} give
\begin{equation*}
 \rho_H(x)\ge \rho_P(\Gamma)
 \prod_{i\in\Gamma}z_i\prod_{i\notin\Gamma}n_i\ge1.\qedhere
\end{equation*}
\end{proof}

\begin{lemma}[Semilinear case]\label{lem:branch-semilinear}
If $J$ is metacyclic and has no regular vector in $W_1$, then
$z_Ln_L^{3}\geq1$ for every chain
$L\trianglelefteq K\trianglelefteq J$.
\end{lemma}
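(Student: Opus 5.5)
The plan is to reduce to an explicit finite list of groups and then check the cubic condition chain by chain, the way Lemmas~\ref{lem:deg34} and~\ref{lem:weighted-small} handle small permutation groups. The list comes from the semilinear embedding combined with a fixed-point count.

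\emph{Step 1: semilinear form.} As in the proof of Corollary~\ref{cor:metacyclic}, Kov\'acs' theorem and \cite[Theorem~2.3.5]{Short92} let me identify $W_1$ with $\mathbb{F}_Q$, $Q=p^f$, and regard $J$ as a primitive subgroup of $\GammaL(1,Q)$. Every subgroup $L$ of a chain $L\trianglelefteq K\trianglelefteq J$ then also lies in $\GammaL(1,Q)$. Put $U_L=L\cap A$. Since $A$ acts semiregularly on $\mathbb{F}_Q^\times$, every $L_w$ with $w\ne0$ meets $A$ trivially. Hence $L_w$ embeds in the cyclic group $\Gal(\mathbb{F}_Q/\mathbb{F}_p)$, so it is cyclic and
$$\nu(L_w)=|L_w|\le|L:U_L|\le f.$$
The inequality to prove is $n_L^3\ge\nu(L)$. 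It therefore suffices to find one $w\ne0$ with
$$|L:L_w|^3\ge\nu(L)\,|L_w|^3.$$
If some $L$ has a regular nonzero vector, this holds at once, because then $n_L^3=|L|^3\ge\nu(L)$. The real content is the chains whose bottom group has no regular vector.

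\emph{Step 2: bound $Q$.} An element $z\mapsto az^\sigma$ with $\sigma$ of order $e>1$ fixes the solution set of $az^\sigma=z$. This set is at most one-dimensional over the fixed field of $\sigma$, so it has at most $p^{f/e}-1$ nonzero points. Nontrivial scalars fix no nonzero point. If $J$ has no regular vector, the nonzero vectors are covered by fixed-point sets of non-scalar elements, which gives
$$Q-1\le\sum_{e\mid f,\ e>1}|A|\,\varphi(e)\,\bigl(p^{f/e}-1\bigr).$$
The right-hand side is $o(Q)$ relative to $Q-1$ once $Q$ is moderately large. I therefore expect this count, together with Theorem~\ref{thm:semilinear}, to restrict $J$ to a short explicit list. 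I anticipate groups like $\GammaL(1,4)$, $\GammaL(1,8)$, $\GammaL(1,9)$, $\GammaL(1,16)$ and a few of their large-index-free subgroups, plus perhaps some with $Q\in\{25,27,32,49,64,81\}$; the main work is confirming the exact bound by the count above.

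\emph{Step 3: verify the list.} For each group on the list I enumerate all chains $L\trianglelefteq K\trianglelefteq J$. For each chain I compute $n_L=\max_{w\ne0}|L:L_w|/|L_w|$ and check $n_L^3\ge\nu(L)$. This is a finite calculation of the same type as Appendix~\ref{sec:comp-semilinear}. There is also a hand check in the typical case. When $U_L=A$ and $L_w$ has order $e\le f$, one gets $|L:L_w|\ge(Q-1)f/e$. Meanwhile $\nu(L)\le|L|\le(Q-1)f$, so $n_L^3\ge\bigl((Q-1)f/e^2\bigr)^3$, and this exceeds $(Q-1)f$ comfortably for $Q\ge8$. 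For $Q=4$ one checks directly that $\GammaL(1,4)\cong S_3$ gives $n_L=3/2$ and $(3/2)^3\ge3=\nu(S_3)$.

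The main obstacle is Step~2: turning the fixed-point count into a clean, provably complete finite list. I would first try to make the bound uniform by using $|A|\le Q-1$ and estimating $\sum_e\varphi(e)p^{f/e}\le f\,p^{f/2}$. This forces $f\,p^{f/2}$ to be comparable with $1$ when $U=A$, and in general forces $|U|\,f\,p^{f/2}\ge Q-1$. That should leave only finitely many pairs $(p,f)$, which Step~3 then handles by computation.
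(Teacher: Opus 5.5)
Your Step~1 matches the paper, but Step~2 does not work, and that step carries the whole argument.

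\textbf{Step~2 cannot produce a finite list.} As written, the displayed inequality is vacuous. With the factor $|A|=Q-1$, the term $e=f$ alone is already at least $|A|(p-1)\ge Q-1$. The correct count uses the fact that the elements of $J$ above a fixed $\sigma$ form a coset of the scalar part $U_J$. It gives only $|U_J|\,f\sqrt Q\ge Q-1$. That is a \emph{lower} bound on $|U_J|$, not an upper bound on $Q$. When $U_J=A$ it holds trivially, so it does not force $f\,p^{f/2}$ to be small.

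In fact no finite list exists. For every $Q=p^f$ with $f\ge2$, the group $\GammaL(1,Q)$ is metacyclic and primitive. It is transitive on $\mathbb{F}_Q^\times$ with point stabiliser of order $f$, so it has no regular vector.

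\textbf{Step~3 does not cover what is left.} Your hand check treats only $U_L=A$, and even there it silently takes $|C_L|=f$. Here is a family it misses. For even $f$ and $q=p^{f/2}$, let $L$ be generated by the scalars of norm $1$ and the map $z\mapsto z^q$. Then $L$ is normal in $\GammaL(1,Q)$. Its scalar part has order $q+1<Q-1$ once $Q\ge9$. Every nonzero vector has stabiliser of order $2$ in $L$. So bottom groups with a proper scalar part and no regular vector occur for infinitely many $Q$.

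\textbf{The missing idea.} Apply the fixed-point count to the bottom group $L$ itself, and use it to bound $|L|$ from below rather than $Q$ from above. The hypothesis on $J$ is needed only for the semilinear embedding and to get $Q\ge4$.

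Set $s_L=\min_{v\ne0}|L_v|$. Then $n_L=|L|/s_L^2$ and $z_Ln_L^3\ge|L|^2/s_L^6$, which settles the case $s_L=1$.

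If $s_L\ge2$, each nonzero $v$ is fixed by at least $s_L-1$ nonidentity elements of $L$, none of them scalar. For $1\ne\sigma\in C_L$, the elements of $L$ above $\sigma$ are the maps $z\mapsto az^\sigma$ with $a$ in a coset $a_\sigma A_L$. Such a map fixes $v$ exactly when $v/v^\sigma=a$. Hence at most $|A_L|(\sqrt Q-1)$ nonzero vectors are fixed by an element above $\sigma$. Summing over $\sigma$ gives $(Q-1)(s_L-1)<|A_L|\,|C_L|\sqrt Q$, and consequently $z_Ln_L^3>(s_L-1)^2Q/(2s_L^6)$.

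Since $s_L\le|C_L|\le\log_2Q$, this is at least $1$ for $Q\ge2^{18}$. The paper then settles $Q<2^{18}$ by an exhaustive computation over \emph{all} subgroups of $\GammaL(1,Q)$ (Proposition~\ref{prop:semilinear-finite}). That computation needs neither a list of groups $J$ nor an enumeration of chains.
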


\begin{proof}
Write $Q=|W_1|$. As in the proof of Corollary~\ref{cor:metacyclic}, we may
identify $W_1$ with $\mathbb{F}_Q$ in such a way that
$$
  J\leq\GammaL(1,Q)
   =\mathbb{F}_Q^\times\rtimes\Gal(\mathbb{F}_Q/\mathbb{F}_p).
$$
Here $Q\ge4$, since for $Q\in\{2,3\}$ every subgroup of $\GammaL(1,Q)$ has a
regular nonzero vector, whereas $J$ has none. Fix a chain
$L\trianglelefteq K\trianglelefteq J$, and let
$A_L=L\cap\mathbb{F}_Q^{\times}$ be its scalar part and let $C_L$ be the image
of $L$ under the natural homomorphism to
$\Gal(\mathbb{F}_Q/\mathbb{F}_p)$. Both $A_L$ and $C_L$ are cyclic, and $L$
fits into the exact sequence
$$
  1\longrightarrow A_L\longrightarrow L\longrightarrow C_L\longrightarrow 1,
$$
which need not split.

For $v\ne0$ no nonidentity scalar fixes $v$. Hence the stabiliser $L_v$ meets
$\mathbb{F}_Q^{\times}$ trivially and embeds in the cyclic group $C_L$. In particular, $L_v$
is cyclic, $\nu(L_v)=|L_v|$ and $\rho_L(v)=|L|/|L_v|^{2}$. Writing
$s_L=\min_{v\ne0}|L_v|$ we obtain $n_L=|L|/s_L^{2}$, and since $\nu(L)\le|L|$,
$$
  z_Ln_L^{3}=\frac{|L|^{3}}{s_L^{6}\,\nu(L)}\ \ge\ \frac{|L|^{2}}{s_L^{6}}.
$$
If $s_L=1$ this yields $z_Ln_L^{3}\ge|L|^{2}\ge1$.

Assume now $s_L\ge2$, so that every nonzero vector is fixed by a nonidentity element
of $L$. Such an element has the form $z\mapsto az^\sigma$ with $\sigma\ne1$,
because a nonidentity scalar fixes no nonzero vector. Fix $\sigma\ne1$ in
$C_L$, and let $c=|\sigma|$. The scalar coefficients $a$ for which the map
$z\mapsto az^\sigma$ belongs to $L$ form a single coset $a_\sigma A_L$ in
$\mathbb{F}_Q^{\times}$, and this map fixes $v\ne0$ precisely when
$v^{1-\sigma}=a$, where
$v^{1-\sigma}:=v/v^\sigma$. For fixed
$\sigma$, each $v$ determines at most one element above $\sigma$ that fixes
it. Equivalently, the number of pairs $(g,v)$ with $g$ lying above $\sigma$
and $0\ne v\in W_1$ fixed by $g$ is
$$
 |\{v\ne0:v^{1-\sigma}\in a_\sigma A_L\}|
 \le |A_L|(Q^{1/c}-1)\le |A_L|(\sqrt Q-1),
$$
because the kernel of $v\mapsto v^{1-\sigma}$ is the multiplicative group of
the fixed field $\mathbb{F}_Q^{\langle\sigma\rangle}$, which has order
$Q^{1/c}-1$. For each nonzero $v$, exactly $|L_v|-1$ nonidentity elements
of $L$ fix $v$, and $|L_v|-1\geq s_L-1$. Counting the pairs $(g,v)$ with
$g\in L\setminus\{1\}$, $v\ne0$ and $v^g=v$, first by $v$ and then by the
image $\sigma$ of $g$ in $C_L$, gives
$$
  (Q-1)(s_L-1)\ \le\ \sum_{v\ne0}\bigl(|L_v|-1\bigr)
  \ =\ \sum_{\sigma\ne1}|\{v\ne0:\,v^{1-\sigma}\in a_\sigma A_L\}|
  \ <\ |A_L|\,|C_L|\,\sqrt Q,
$$
whence $|A_L|>(s_L-1)(Q-1)/(|C_L|\sqrt Q)$. Using
$|L|=|A_L||C_L|$ in the preceding bound gives
$z_Ln_L^{3}\ge|L|^{2}/s_L^{6}$. Substituting the inequality for $|A_L|$ and using $(Q-1)^{2}\ge
Q^{2}/2$ for $Q\ge4$, we obtain
$$
  z_Ln_L^{3}\ \ge\ \frac{|A_L|^{2}\,|C_L|^{2}}{s_L^{6}}
  \ >\ \frac{(s_L-1)^{2}(Q-1)^{2}}{Q\,s_L^{6}}
  \ \ge\ \frac{(s_L-1)^{2}}{2\,s_L^{6}}\,Q,
$$
which is at least $1$ as soon as $Q\ge 2s_L^{6}/(s_L-1)^{2}$. Finally,
$$
  s_L\le|C_L|\le |\Gal(\mathbb{F}_Q/\mathbb{F}_p)|
  =\log_p Q\le\log_2 Q.
$$
Hence the sufficient bound
$Q\ge2s_L^{6}/(s_L-1)^{2}$ can fail only when $2^{s_L}\le
Q<2s_L^{6}/(s_L-1)^{2}$. The function
$2s_L^{6}/(s_L-1)^{2}$ is increasing for $s_L\ge2$. At $s_L=18$ the
lower bound $2^{s_L}$ is already at least this function, and the ratio
$2^{s_L}(s_L-1)^2/(2s_L^6)$ increases for $s_L\ge18$. Therefore
$s_L\le17$ and $Q<2\cdot17^{6}/16^{2}<2^{18}$. The finitely many summand
sizes $Q<2^{18}$ that remain are covered, for every possible bottom group $L$, by
Proposition~\ref{prop:semilinear-finite}. Hence $z_Ln_L^{3}\ge1$ for every
chain.
\end{proof}

\begin{theorem}\label{thm:imprimitive}
Let $H$ be a finite solvable group, and let $V$ be a faithful irreducible
imprimitive $H$-module. Then $V$ contains an abundant vector.
\end{theorem}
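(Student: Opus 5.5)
The plan is to combine Lemma~\ref{lem:unrefinable-blocks} with the trichotomy of Theorem~\ref{thm:local-alternatives} and finish in each case with one of the three branch lemmas. Since $V$ is imprimitive, it has a nontrivial system of imprimitivity. Among all such systems, choose one with the largest number $m$ of summands. This system is unrefinable, because a refinement would have strictly more summands. We then set up the notation fixed before Lemma~\ref{lem:branch-balanced}: the index set $I$, the solvable transitive top group $P\le\Sym(I)$, the local group $J$ on $W_1$, and the coordinate chains $L_i\trianglelefteq K_i\trianglelefteq J$, with each local group identified with $J$ via the elements $t_i$. By Lemma~\ref{lem:unrefinable-blocks}, $J$ is a nontrivial, faithful, irreducible, primitive and solvable linear group on $W_1$.

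Next we apply Theorem~\ref{thm:local-alternatives} to $J$ on $W_1$, which gives three cases.
\begin{enumerate}
\item If $J$ has a balanced pair, Lemma~\ref{lem:branch-balanced} produces an abundant vector directly.
\item If $J$ is metacyclic with no regular vector, Lemma~\ref{lem:branch-semilinear} gives $z_Ln_L^3\ge1$ for every chain $L\trianglelefteq K\trianglelefteq J$.
\item If the action of $J$ on $W_1$ is linearly equivalent to one of the nine residual actions, Theorem~\ref{thm:local-alternatives} itself gives $z_Ln_L^3\ge1$ for every chain.
\end{enumerate}
In cases (2) and (3) the inequality holds for every chain, so in particular $z_{L_i}n_{L_i}^3\ge1$ for the bottom group $L_i$ of each coordinate chain. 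Lemma~\ref{lem:branch-profile} then supplies an abundant vector.

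Case (3) needs one check. The quantities $z_L$ and $n_L$ must be invariant under linear equivalence and under the transport maps $w\mapsto w^{t_i^{-1}}$. The abundance ratios $\rho_L$ and the orders $\nu(L)$ are preserved by conjugation in $\GL$. Hence the inequality established for the standard representatives in Theorem~\ref{thm:residual}(2) transfers to $J$ and to each $J_i$. We also need that every chain $L_i\trianglelefteq K_i\trianglelefteq J_i$ corresponds, under the identification, to a chain in $J$. This was arranged in Section~\ref{sec:coordinate}, where we showed $L_i\trianglelefteq K_i\trianglelefteq J_i$.

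With these reductions the theorem is a bookkeeping assembly, and the substantive work sits in the earlier results. These are the weighted Theorem~\ref{thm:weighted} behind Lemma~\ref{lem:branch-profile}, the finite verifications behind Theorem~\ref{thm:residual}, and Proposition~\ref{prop:semilinear-finite}. The main step to get right is therefore the case split. The three alternatives of Theorem~\ref{thm:local-alternatives} must be exhaustive for $J$. We also need that the cubic condition is needed only at the bottom groups $L_i$ actually occurring, and not merely for $L=J$, which holds because it is proved for every chain.
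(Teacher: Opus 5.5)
Your proposal is correct and follows the paper's proof essentially step for step: choose an unrefinable system, use Lemma~\ref{lem:unrefinable-blocks} to apply Theorem~\ref{thm:local-alternatives} to $J$, and finish with Lemma~\ref{lem:branch-balanced} in the balanced case and with Lemma~\ref{lem:branch-profile} in the metacyclic and residual cases. Your extra remarks fill in details the paper leaves implicit: an unrefinable system exists because you can take one with the maximal number of summands, and $z_L$ and $n_L$ are unchanged under linear equivalence and under the transport maps.
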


\begin{proof}
Choose a nontrivial unrefinable system of imprimitivity
$V=W_1\oplus\cdots\oplus W_m$, and let $J$ be the group induced on
$W_1$ by $H_{W_1}$. The group $J$ is solvable and, by
Lemma~\ref{lem:unrefinable-blocks}, nontrivial, faithful, irreducible and
primitive, so Theorem~\ref{thm:local-alternatives} applies. If $J$ has a
balanced pair, Lemma~\ref{lem:branch-balanced} gives an abundant vector in
$V$. If $J$ is metacyclic with no regular vector,
Lemma~\ref{lem:branch-semilinear} gives $z_Ln_L^3\geq1$ for every coordinate
chain. If the action of $J$ on $W_1$ is linearly equivalent to one of the nine residual actions,
the same inequality follows from Theorem~\ref{thm:local-alternatives}. In
either of the last two cases, Lemma~\ref{lem:branch-profile} gives an
abundant vector in $V$.
\end{proof}

\section{Proof of the abundance theorem and of Gluck's conjecture}
\label{sec:gaschutz}

The abundance theorem now follows immediately.

\begin{proof}[Proof of Theorem~\ref{thm:abundance-intro}]
By Corollary~\ref{cor:reduce-irreducible} a counterexample would give a faithful
irreducible $\mathbb{F}_pH$-module that is a counterexample. Such a module is either
primitive or imprimitive, and Theorems~\ref{thm:primitive-action}
and~\ref{thm:imprimitive} rule out both.
\end{proof}

Gluck's conjecture now follows at once. By Theorem~\ref{thm:abundance-intro}
every faithful completely reducible module for a finite solvable group contains an
abundant element. Hence Lemma~\ref{lem:reduction} gives
$|G:\mathbf{F}(G)|\le b(G)^2$ for every finite solvable group $G$. This proves
Theorem~\ref{thm:gluck-holds}.

\appendix

\section{Finite verifications for Gluck's conjecture}
\label{sec:computations}

This appendix describes the four finite calculations used in the proof.
Holt and Yang~\cite{HY23} present a classification of the residual actions
and supply matrix data for the actions in their list. The reconstruction
in~\cite{ZhangRegularOrbits26} finds twelve further
$\GL(8,3)$-conjugacy classes absent from those data and proves that the
corrected classification is complete. Nine of the twelve additional classes
are linearly primitive, and only these are relevant here. We verify only the
abundance and local conditions required for the residual actions, together with
the small permutation and semilinear calculations used earlier in the paper.

\begin{sloppypar}
The package consists of the directories \texttt{code} and \texttt{outputs},
which must remain together. Run each program in Table~\ref{tab:programs} from
the \texttt{code} directory. No program requires the output of another. The
program \path{verify_residual_actions.g} reads
\path{residual_action_data.g}, which contains the
matrix generators and the vectors used below for the $411$ linearly primitive
entries representing matrix groups. Of these, $402$ come from the data
accompanying~\cite{HY23}. The other nine are representatives of the nine
additional linearly primitive classes found in~\cite{ZhangRegularOrbits26}.
Thus the certificates are checked without
repeating either classification calculation. A path beginning with
\path{outputs/} below refers to the package's
\texttt{outputs} directory.
\end{sloppypar}

\begin{table}[H]
\caption{\GAP\ programs and the parts of the proof they verify.}
\label{tab:programs}
\centering
\footnotesize
\renewcommand{\arraystretch}{1.2}
\begin{tabular}{@{}p{0.37\textwidth}|p{0.56\textwidth}@{}}
 program & calculation\\\hline
 \path{verify_permutation.g} &
   Lemmas~\ref{lem:deg5789} and~\ref{lem:weighted-small}(3)\\
 \path{verify_semilinear.g} &
   the six exceptional field orders in Theorem~\ref{thm:semilinear}\\
 \path{verify_residual_actions.g} &
   the certificates for residual actions in Theorem~\ref{thm:residual}\\
 \path{verify_semilinear_threshold.g} &
   the finite calculation in Proposition~\ref{prop:semilinear-finite}\\
\end{tabular}
\end{table}

The four \GAP\ calculations are practical to rerun. In our runs, their CPU
times ranged from a few seconds to less than half an hour. Each used less
than $700$ MB of RAM.

\begin{sloppypar}
The directory \texttt{outputs} contains a transcript of each complete \GAP\
calculation in Table~\ref{tab:programs}. The transcript and program have the
same base name. For example, \path{verify_semilinear.g} is recorded in
\path{outputs/verify_semilinear.out}. Each transcript ends with a summary of
its numerical conclusions. The matrix generators and vectors are also provided
in \path{residual_action_data.m} in \Magma\ form.

The corresponding \Magma\ programs~\cite{BCP97} have the same base names as
the programs in Table~\ref{tab:programs}, with the extension
\path{.magma}. They implement the same checks separately and are not used in
the proof. The \GAP\ and \Magma\ programs for residual actions read the same
stored matrix generators and vectors.
\end{sloppypar}

\subsection{Representation of the groups and the computation of \texorpdfstring{$\nu$}{nu}}
\label{sec:comp-nu}
\begin{sloppypar}
All completed \GAP\ calculations used in the proof were carried out in
\GAP~4.15.1~\cite{GAP4}.
The permutation calculation uses the \GAP\ libraries
\texttt{primgrp}~4.0.1 and \texttt{transgrp}~3.6.5, obtaining groups through
\texttt{PrimitiveGroup} and \texttt{TransitiveGroup}. The programs
\path{verify_semilinear.g} and \path{verify_residual_actions.g} use
\textsf{IRREDSOL}~1.4.4 for irreducibility and primitivity tests. Linear groups are represented
as matrix groups over the prime field $\mathbb{F}_p$. The residual actions are
constructed from the generators in \path{residual_action_data.g}. The semilinear groups
$\GammaL(1,Q)$ are constructed directly over $\mathbb{F}_p$.

To compute $\nu(X)$ exactly, the program sets $\nu(X)=|X|$ when $X$ is
nilpotent. Otherwise it calls \texttt{ConjugacyClassesSubgroups} on $X$ and
takes the greatest order among the representatives found to be nilpotent by
\texttt{IsNilpotentGroup}. Every nilpotent subgroup of $X$ is conjugate to one
of these representatives, so this maximum is $\nu(X)$. The abundance ratio
$\rho_X(x)=|X:X_x|/\nu(X_x)$ is then obtained from the orbit length and
stabiliser, together with $\nu(X_x)$.
\end{sloppypar}

\subsection{Small permutation groups}
\label{sec:comp-perm}
The program \texttt{verify\_permutation.g} finds chain-abundant orbits on
subsets of the required cardinalities for every representative of a solvable
primitive permutation group of degree at most nine in \GAP{}'s library.
For every such representative $J$ on a
domain $D$ of degree at most nine, it forms every $J$-orbit on $\pow(D)$, using
the action on subsets, and every chain $L\trianglelefteq K\trianglelefteq J$,
by letting $K$ range over the normal subgroups of $J$ and $L$ over the normal
subgroups of $K$. There are $21$ such representatives in
\GAP{}'s primitive groups library in degrees $2,\ldots,9$. This is exactly the
family of chains over which
Definition~\ref{def:chain-label} quantifies.
Because $L$ need not be normal in $J$, the value $\rho_L(\Delta)$ can vary as
$\Delta$ ranges over a single $J$-orbit, so examining a single orbit
representative is not sufficient. For every group $L$ occurring as the bottom group of a
chain, the program examines members of the orbit until it finds one satisfying
$|L:L_\Delta|\ge\nu(L_\Delta)$. If none does, every member is examined. The
same bottom group arising from more than one chain is considered only once,
since the inequality does not involve the intermediate group $K$. An orbit is
chain-abundant precisely when a suitable member is found for every such $L$.

The proof needs chain-abundant orbits on subsets of prescribed sizes, not
merely their total number. Of these $21$ solvable primitive permutation
groups, $C_2$ is handled separately. For each of the remaining $20$, the
program verifies chain-abundant orbits on subsets of sizes $1$ and $2$, except
for $\mathrm{A}\Gamma\mathrm{L}(1,9)$ and $\mathrm{AGL}(2,3)$, where it verifies sizes
$2$ and $3$. For those two groups the point stabiliser $J_x$, with $x\in D$,
contains a nilpotent subgroup of order $16>9$, so the singleton orbit is not
abundant for the chain $L=K=J$ and no chain-abundant orbit on singletons exists. For the group $C_2$ of degree two,
the program verifies that there is exactly one chain-abundant orbit, the
orbit of the singleton subsets. The conclusions in degrees $5,7,8,9$ are
precisely those of Lemma~\ref{lem:deg5789}. In degrees three and four the
calculation independently confirms Lemma~\ref{lem:deg34}. The calculation in
degree two confirms that the singleton orbit is the only chain-abundant
orbit, the local fact used in Lemmas~\ref{lem:c2-balance}
and~\ref{lem:c2-weighted}. For
$(a_Z,a_N)=(2,1)$, the condition $a_Z+3a_N\le|D|$ of
Lemma~\ref{lem:weighted-small}(3) becomes $2+3\cdot1\le|D|$. For the two
exceptional groups of degree nine, where $(a_Z,a_N)=(3,2)$, it becomes
$3+3\cdot2\le9$. Thus the cubic pairs of that lemma are verified as well.

The program also verifies directly that all $501$ representatives of solvable
transitive permutation groups in \GAP{}'s \texttt{transgrp} library of
degrees $2,\ldots,15$ have an
abundant subset. This gives a finite check of Theorem~\ref{thm:top-hni} in
those degrees, but is not used in its proof.

\subsection{Small semilinear groups}
\label{sec:comp-semilinear}
\begin{sloppypar}
The program \texttt{verify\_semilinear.g} performs the finite calculation used in
Theorem~\ref{thm:semilinear}. For each of the six field orders
$Q\in\{2,3,4,8,9,16\}$ it constructs $\GammaL(1,Q)$ as a matrix group over
$\mathbb{F}_p$, using multiplication by a primitive element of
$\mathbb{F}_Q$ and the Frobenius map $x\mapsto x^{p}$ as generators relative to
a fixed basis over $\mathbb{F}_p$. It then enumerates the conjugacy classes of subgroups
using \texttt{ConjugacyClassesSubgroups} and retains the solvable subgroups for
which \texttt{IsIrreducibleMatrixGroup} and
\texttt{IsPrimitiveMatrixGroup} both hold. These properties are
invariant under conjugation, so class representatives suffice. For every
retained subgroup $K$, the program computes $n_{\rm reg}(K)$, the number of
$K$-orbits of regular pairs defined in Subsection~\ref{sec:semilinear}, by
decomposing $\mathbb{F}_Q\times\mathbb{F}_Q$ into $K$-orbits and counting
those with trivial stabiliser. It verifies that every proper retained subgroup
$K<\GammaL(1,Q)$ has at least five orbits of regular pairs. For the full
semilinear groups, the values are
$$
  n_{\rm reg}\bigl(\GammaL(1,Q)\bigr)=4,\,4,\,1,\,2,\,3,\,3\qquad
  (Q=2,3,4,8,9,16).
$$
In particular each value is at least $1$, so each group possesses a regular
pair, as Theorem~\ref{thm:semilinear} asserts.
\end{sloppypar}

\subsection{Residual linear groups}
\label{sec:comp-residual}

\begin{theorem}[Residual verification]\label{thm:residual}
Let $H$ be a finite solvable group acting faithfully, irreducibly and
primitively on a finite $\mathbb{F}_p$-space $V$. Suppose that $H$ is not
metacyclic and has no regular orbit. Then both of the following hold.
\begin{enumerate}
\item The module $V$ contains an abundant vector.
\item Either $H$ has a balanced pair, or its action on $V$ is linearly equivalent to one of nine
  residual actions that are transitive on $V\setminus\{0\}$ and satisfy the
  cubic condition $z_Ln_L^{3}\ge1$ for every chain
  $L\trianglelefteq K\trianglelefteq H$.
\end{enumerate}
\end{theorem}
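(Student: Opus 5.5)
The statement is a finite verification, so I would prove it by reducing to the explicit list and then checking certificates by computer. By Theorem~\ref{thm:residual-classification}, under the hypotheses (faithful, irreducible, primitive, solvable, not metacyclic, no regular orbit), the action of $H$ on $V$ is linearly equivalent to one of the $411$ linearly primitive entries in \path{residual_action_data.g}. These are the $402$ Holt--Yang matrix groups together with the nine representatives of the additional classes from~\cite{ZhangRegularOrbits26}. Linear equivalence preserves orbit lengths, stabilisers and the lattice of chains $L\trianglelefteq K\trianglelefteq H$ together with their actions. Hence both conclusions are invariant, and it suffices to check each stored representative.

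For part~(1), I would not recompute orbit decompositions blindly. For each entry, the data file stores a certificate vector $v$, and the program \path{verify_residual_actions.g} computes $H_v$, $|H:H_v|$ and $\nu(H_v)$ by the method of Subsection~\ref{sec:comp-nu}. It then checks that $|H:H_v|\ge\nu(H_v)$. Where a single vector is awkward, one may instead store a regular pair $(a,b)$ and check $H_a\cap H_b=1$; Lemma~\ref{lem:regular-pair} then supplies the abundant vector.

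For part~(2), I would again use stored certificates, now a pair of vectors $a,b$ in distinct $H$-orbits for each entry. Running over all chains means letting $K$ range over the normal subgroups of $H$ and $L$ over the normal subgroups of $K$, with repeated $L$ counted once. For each such $L$, the program searches the orbits $a^H$ and $b^H$ for representatives with $\rho_L(a')\rho_L(b')\ge1$. This is necessary because $L$ need not be normal in $H$, so $\rho_L$ varies along an $H$-orbit. The search should try the easy routes first: if $(a,b)$ is a regular pair, Lemma~\ref{lem:balanced-sources}(3) settles every chain at once, and two chain-abundant orbits settle it by Lemma~\ref{lem:balanced-sources}(1).

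The entries where no balanced pair exists are expected to be exactly nine actions that are transitive on $V\setminus\{0\}$. There only two $H$-orbits exist, $\{0\}$ and $V\setminus\{0\}$, so a balanced pair would have to be $(V\setminus\{0\},\{0\})$ in some order, and this can fail. For these nine actions I would verify directly that $z_Ln_L^3\ge1$ for every bottom group $L$, computing $n_L$ as the maximum of $\rho_L(w)$ over nonzero $w$. It is enough to take $w$ over representatives of the $L$-orbits.

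The main obstacle is the size and feasibility of the chain enumeration. This is worst for the largest groups, for instance those acting on $\mathbb{F}_3^8$, where the normal-subgroup lattices of the $K$ and the $\nu$ computations through \texttt{ConjugacyClassesSubgroups} are expensive. I would mitigate this by three means:
\begin{itemize}
\item deduplicating bottom groups up to $H$-conjugacy, which is valid because conjugating $L$ by $h\in H$ corresponds to moving the vectors by $h$, and the orbits are $H$-invariant;
\item short-circuiting as soon as a regular pair certificate applies;
\item computing $\nu$ of a stabiliser only when the index test is not already decided by $|L:L_x|\ge|L_x|$.
\end{itemize}
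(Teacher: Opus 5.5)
Your plan matches the paper's proof. You reduce via Theorem~\ref{thm:residual-classification} to the $411$ stored entries, and certify~(1) with a stored vector checked by the order bound or an exact $\nu$ computation. You then certify~(2) by regular pairs in distinct orbits ($394$ entries in the paper), two chain-abundant orbits ($8$ entries), and the cubic condition for the nine entries transitive on $V\setminus\{0\}$.

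The differences are only in implementation. In the order-bound cases the paper never computes $H_v$: it stops enumerating the orbit once $\lfloor\sqrt{|H|}\rfloor+2$ points are found. Chain enumeration is needed only for the $17$ small entries without a regular-pair certificate, so the bottleneck you anticipate does not arise.
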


\begin{proof}
By Theorem~\ref{thm:residual-classification}, every action satisfying the
hypotheses is linearly equivalent to one of the actions represented by the
$411$ linearly primitive entries described at the beginning of this appendix.
It is therefore enough to verify the action represented by each entry. These
$411$ entries represent $353$
conjugacy classes in their respective general linear groups, so repetitions
merely repeat some of the
verifications. The program
\path{verify_residual_actions.g} checks the matrix degree, ground field,
solvability, irreducibility and primitivity before checking the certificate
for each entry.

\subsubsection*{Abundant vectors}
Fix one of the $411$ entries, let $V=\mathbb{F}_p^{\,d}$, and let
$J\leq\GL(V)$ be the matrix group represented by that entry. We exhibit a
vector $v$ with $\rho_J(v)=|J:J_v|/\nu(J_v)\ge1$ and verify this inequality
directly. The calculation uses one of the following three criteria:
\begin{enumerate}
\item[(R)] \emph{regular:} if $|J_v|=1$ then $\rho_J(v)=|J|\ge1$.
\item[(O)] \emph{order bound:} if $|J:J_v|\ge|J_v|$ then, since $\nu(J_v)\le|J_v|$,
  $$
    \rho_J(v)\ge\frac{|J:J_v|}{|J_v|}\ge1.
  $$
  This uses only the orbit length and the stabiliser order, and does not
  require $\nu(J_v)$ to be computed.
\item[(E)] \emph{exact calculation of $\nu$:} otherwise compute $\nu(J_v)$ by the method of
  Appendix~\ref{sec:comp-nu} and verify $|J:J_v|\ge\nu(J_v)$ directly.
\end{enumerate}
The program constructs distinct points of the orbit $v^J$ until the complete
orbit has been obtained or until the following bound proves criterion~(O). If
$\lfloor\sqrt{|J|}\rfloor+2$ distinct points have been found, the orbit has
length greater than $\sqrt{|J|}$, so
$|J:J_v|>\sqrt{|J|}>|J_v|\ge\nu(J_v)$
and criterion~(O) already applies. The calculation therefore stops at that point.
This bound makes the computation
feasible: several residual modules are large --- already
$|V|=3^{16}=43\,046\,721$ occurs --- so a full orbit enumeration is
impractical, whereas this calculation constructs only slightly more than
$\sqrt{|J|}$ orbit points. Only if the whole orbit has been obtained before
this bound is reached does the program compute the exact stabiliser and, where
necessary, $\nu(J_v)$. For the ten groups requiring criterion~(E), the resulting
stabilisers are small enough for their conjugacy classes of subgroups to be
computed.

The file \path{residual_action_data.g} lists the vectors explicitly. They are
not searched for during the verification. The file contains one certificate for
each of the $411$ linearly primitive entries. Matrix and
vector entries in $\mathbb{F}_p$ are written as integers from $0$ to $p-1$.
The file contains no orbit lengths, stabilisers or values of
$\nu$. The program computes the information required by the applicable
criterion.
By the classification, none of the $411$ listed actions has a
regular orbit, so criterion~(R) does not occur. For $401$ entries the listed vector is
verified by~(O), and for the remaining $10$ it is verified by~(E). This proves
item~(1).

\subsubsection*{Balanced pairs and cubic conditions}
Item~(2) is used in Theorem~\ref{thm:local-alternatives}. In the case of a regular pair, the vectors
$a,b\in V$ satisfy $J_a\cap J_b=1$ and $b\notin a^{J}$.
Thus $(a,b)$ is a regular pair with entries in distinct orbits, and
$(a^J,b^J)$ is balanced by Lemma~\ref{lem:balanced-sources}(3). The program
computes $J_a\cap J_b$ as the stabiliser of
$b$ in the previously computed stabiliser $J_a$, and verifies $b\notin a^J$
by testing membership in the explicitly
enumerated orbit $a^J$.
\begin{sloppypar}
The program uses the two vectors listed in
\path{residual_action_data.g} for $394$ entries. For eight
further entries, all on modules of order at most $81$, the same file lists
vectors in two distinct nonzero orbits. The program
enumerates those two orbits and, for each group $L$ occurring as the bottom
group of a chain $L\trianglelefteq K\trianglelefteq J$, verifies that each
orbit contains a vector $a$ with $\rho_L(a)\geq1$. The two orbits are therefore
chain-abundant and form a balanced pair by
Lemma~\ref{lem:balanced-sources}(1). Finally, the groups represented by the
remaining nine entries are transitive on the nonzero vectors of their modules, so
$\{0\}$ and $V\setminus\{0\}$ are their only orbits. For each of them the
program enumerates every nonzero vector and verifies the \emph{cubic condition}
$$
  z_Ln_L^{3}=\frac{\bigl(\max_{v\ne0}\rho_L(v)\bigr)^{3}}{\nu(L)}\ \ge\ 1
  \qquad\text{for every chain } L\trianglelefteq K\trianglelefteq J,
$$
which is the hypothesis of Lemma~\ref{lem:branch-profile}. These nine actions
are listed in
Table~\ref{tab:cubic-actions}. Its first column gives the row, type
and entry number in the corresponding Holt--Yang data file~\cite{HY23}.
\end{sloppypar}
\begin{table}[H]
\caption{Residual actions for which the cubic condition is used.}
\label{tab:cubic-actions}
\centering
$
\renewcommand{\arraystretch}{1.15}
\begin{array}{c|c|c}
 \text{Holt--Yang row and entry} & V & |J|\\\hline
 19\,(E^{-}):2 & \mathbb{F}_3^{\,4} & 640\\
 19\,(E^{-}):5 & \mathbb{F}_3^{\,4} & 320\\
 19\,(E^{-}):6 & \mathbb{F}_3^{\,4} & 160\\
 62:1 & \mathbb{F}_3^{\,2} & 48\\
 62:2 & \mathbb{F}_3^{\,2} & 24\\
 63:1 & \mathbb{F}_5^{\,2} & 96\\
 63:2 & \mathbb{F}_5^{\,2} & 48\\
 64:1 & \mathbb{F}_7^{\,2} & 144\\
 66:1 & \mathbb{F}_{11}^{\,2} & 240
\end{array}
$
\end{table}
Of the nine, entry $62:2$ can be checked by hand. Here
$J\cong\SL(2,3)$ acts on $V=\mathbb{F}_3^{\,2}$ and
$\nu(J)=8$, attained by the quaternion subgroup $Q_8$. The group $J$ is
transitive on the eight nonzero vectors with point stabiliser of order $3$,
so every nonzero $v$ has $\rho_J(v)=(24/3)/\nu(C_3)=8/3$, whence $n_J=8/3$
and $z_Jn_J^{3}=(1/8)(8/3)^{3}=64/27\ge1$. For the remaining possible bottom
groups, the calculation is immediate:
$Q_8$ acts regularly on the eight nonzero vectors, so $z_Ln_L^{3}=8^{3}/8=64$ for
$L=Q_8$, and its subgroups have trivial point stabilisers, giving $16$ for $L=C_4$, $4$ for
$L=C_2$ and $1$ for $L=1$. Every chain $L\trianglelefteq K\trianglelefteq J$
therefore satisfies the cubic condition, with the minimum value $1$ attained
when $L=1$.
When checking chain-abundant orbits and the cubic condition, the program
examines every subgroup $L$ that can occur as the bottom group of a chain
$L\trianglelefteq K\trianglelefteq J$. Since the inequalities do not involve
the intermediate group $K$, this covers every coordinate chain that can
arise in Theorem~\ref{thm:coordinate}. No
separate enumeration of chains is needed for regular pairs, because
Lemma~\ref{lem:balanced-sources}\textup{(3)} applies to every chain. The program
verifies regular pairs for $394$ entries, chain-abundant orbits for eight
entries, and the cubic condition for nine entries. These numbers sum to
$411$, proving~(2).
\end{proof}

\subsection{The semilinear threshold}
\label{sec:comp-threshold}

\begin{proposition}[Semilinear subgroups below the threshold]\label{prop:semilinear-finite}
Let $Q<2^{18}$ be a prime power and let $L\le\GammaL(1,Q)$ act on $\mathbb{F}_Q$.
Set $z_L=1/\nu(L)$ and $n_L=\max_{v\ne0}\rho_L(v)$. Then $z_Ln_L^{3}\ge1$.
\end{proposition}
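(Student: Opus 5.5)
This is a finite verification, organised to avoid enumerating subgroups of $\GammaL(1,Q)$ one by one for every prime power $Q<2^{18}$. The reduction proceeds exactly as in the proof of Lemma~\ref{lem:branch-semilinear}.

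Fix $L\le\GammaL(1,Q)$ and write $A_L=L\cap\mathbb{F}_Q^\times$. Let $C_L$ be the image of $L$ in $\Gal(\mathbb{F}_Q/\mathbb{F}_p)$, and set $c=|C_L|$. A nonidentity scalar fixes no nonzero vector, so every nonzero point stabiliser $L_v$ embeds in the cyclic group $C_L$. Hence $\rho_L(v)=|L|/|L_v|^2$, and
$$z_Ln_L^3=\frac{|L|^3}{s_L^6\,\nu(L)}\ \ge\ \frac{|L|^2}{s_L^6},$$
where $s_L=\min_{v\ne0}|L_v|$.
\begin{itemize}
\item If $s_L=1$, then $z_Ln_L^3\ge|L|^2\ge1$, and nothing needs checking.
\item If $s_L\ge 2$, then $s_L$ divides $c$, and $c$ divides $f=\log_pQ\le17$.
\end{itemize}
So only subgroups with $C_L\neq1$ and no regular nonzero vector matter.

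The counting inequality already proved gives
$$|A_L|>\frac{(s_L-1)(Q-1)}{c\sqrt Q}.$$
This yields $z_Ln_L^3\ge1$ whenever
$$\frac{|A_L|^2c^2}{s_L^6}\ge1.$$
That condition is just $|A_L|\,c\ge s_L^3$, which holds automatically once $|A_L|\ge s_L^3/c$. So for each pair $(Q,c)$ the only candidates are subgroups whose scalar part $A_L$ is small. Concretely, $|A_L|<17^3$, and $|A_L|$ must exceed the counting bound. For each $(Q,c)$ these are finitely many divisors $a$ of $Q-1$.

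For each such $Q$, each divisor $c\mid f$ and each admissible $a$, the subgroups $L$ with $|A_L|=a$ and $|C_L|=c$ have a simple form. Such an $L$ is $A_L\langle g\rangle$ with $g\colon z\mapsto \beta z^{\sigma}$, where $\sigma$ generates the order-$c$ subgroup of the Galois group. Up to conjugation by scalars, $\beta$ only matters modulo $A_L$ and modulo the image of $x\mapsto x^{\sigma-1}$. The constraint that $g^c\in A_L$ leaves a small explicit list of parameters.

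For each parameter choice, the program will do three things:
\begin{itemize}
\item compute $s_L$ directly, from the equations $v^{1-\sigma^k}\in\beta_kA_L$;
\item compute $\nu(L)$ exactly, or bound it by $|L|$ where that already suffices;
\item check $|L|^3\ge s_L^6\,\nu(L)$.
\end{itemize}
This would be implemented in \path{verify_semilinear_threshold.g}, using the order-based shortcut $\nu(L)\le|L|$ first and exact $\nu$ only when needed.

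The main obstacle is completeness of the parametrisation of subgroups of $\GammaL(1,Q)$ for all $Q<2^{18}$, together with keeping the case count feasible. I would first try to prune by the analytic bound: for $s_L\ge2$ and $Q$ beyond a modest size, the inequality holds unless $a$ is tiny. Exhaustive subgroup enumeration via \texttt{ConjugacyClassesSubgroups} would be reserved for small $Q$, say $Q\le 2^{10}$, where it is cheap and double-checks the parametrisation.
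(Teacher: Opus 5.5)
Your proposal is correct, but it is organised differently from the paper's proof.

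\textbf{The paper's route.} The paper uses no analytic input at this point. It parametrises every subgroup of $\GammaL(1,Q)$ uniquely as $L(u,r,j)=\langle s^u,\varphi^rs^j\rangle$, subject to $u\mid jT_{r,c}$. It then runs through all of these subgroups for the $149$ prime powers $Q=p^f<2^{18}$ with $f\ge2$, which is about $1.9$ million nontrivial subgroups. For each one it computes the largest orbit length $m_L$ purely arithmetically, from the cycle structure of the affine map $t\mapsto p^rt+j$ on $\mathbb{Z}/u\mathbb{Z}$; the periodic points are counted by solving linear congruences. This gives $n_L=m_L^2/|L|$, which is equivalent to your $s_L=|L|/m_L$. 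The program checks $n_L^3\ge|L|$, and computes $\nu(L)$ exactly only for the three failures $\GammaL(1,4)$, $\GammaL(1,8)$ and $\GammaL(1,16)$.

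\textbf{Your route.} You import the counting inequality from the proof of Lemma~\ref{lem:branch-semilinear}. That inequality holds for any $L\le\GammaL(1,Q)$ and is proved before the proposition is invoked, so there is no circularity. With it you restrict to $c\ge2$, $s_L\ge2$ and
$$\frac{Q-1}{c\sqrt Q}<|A_L|<\frac{s_L^3}{c}\le c^2.$$
Since $s_L\mid c\le17$, this gives $|A_L|<289$, which is sharper than your $17^3$. You then enumerate only these groups, up to scalar conjugation. Here $\beta$ is taken modulo $A_L$ and the image of $x\mapsto x^{\sigma-1}$, subject to $g^c\in A_L$. Your description $L=A_L\langle g\rangle$ with $\sigma$ fixed is complete, since the order-$c$ subgroup of the cyclic Galois group is unique. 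The conjugacy reduction is also legitimate, because $z_Ln_L^3$ is a conjugacy invariant.

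\textbf{Trade-offs.} Your approach makes the candidate list tiny, and often empty. For example, for $Q=2^{17}$ the window for $c=17$ is roughly $(21.3,289)$, and $2^{17}-1$ is prime, so no divisor lies in it. The cost is that the proposition now depends on the analytic counting step and on the conjugacy reduction. Scanning all $Q-1$ vectors to find $s_L$ is more expensive per group than the paper's residue arithmetic, but with so few candidates that does not matter. Your window does still catch the same three groups, where exact $\nu$ is needed. The paper's exhaustive route is logically self-contained and independent of Lemma~\ref{lem:branch-semilinear}. Its very cheap per-subgroup computation is exactly what makes enumerating every subgroup feasible.
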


\begin{proof}
Write $Q=p^f$. If $f=1$, then
$L\leq\GammaL(1,p)=\mathbb{F}_p^\times$ is cyclic and every nonzero
vector has trivial stabiliser. Hence $z_Ln_L^3=|L|^2\geq1$.
Assume that $f\geq2$. The program
\texttt{verify\_semilinear\_threshold.g} treats all $149$ prime powers
$Q=p^f<2^{18}$ of this form.

Choose a generator $\zeta$ of $\mathbb{F}_Q^\times$ and identify
$\zeta^t$ with $t$ modulo $Q-1$. With actions on the right,
$$
 \GammaL(1,Q)=
 \langle s,\varphi\mid s^{Q-1}=\varphi^f=1,\ s^\varphi=s^p\rangle,
$$
where $s$ sends $t$ to $t+1$ and $\varphi$ sends $t$ to $pt$.
Every subgroup has a unique description
$$
 L(u,r,j)=\langle s^u,\varphi^rs^j\rangle,
$$
where $u\mid Q-1$, $r\mid f$, $0\leq j<u$, and
$u\mid jT_{r,c}$, with
$c=f/r$ and
$T_{r,c}=1+p^r+\cdots+p^{(c-1)r}$.
Indeed, $\langle s^u\rangle=L\cap\langle s\rangle$, the image of $L$
in the Galois quotient is generated by the image of $\varphi^r$, and
$(\varphi^rs^j)^c=s^{jT_{r,c}}$.
Thus the divisibility condition is necessary and sufficient, and
$$
 |L(u,r,j)|=\frac{Q-1}{u}\,c.
$$
The program runs through all parameters satisfying these conditions
and therefore includes every subgroup.

The $\langle s^u\rangle$-orbits on $\mathbb{F}_Q^\times$ correspond to
the residue classes modulo $u$, each of length $(Q-1)/u$.
The element $\varphi^rs^j$ acts on these classes as
$$
 \alpha(t)=p^rt+j\pmod u.
$$
For each $h\mid c$, the fixed points of $\alpha^h$ are the solutions of
$$
 (p^{rh}-1)t\equiv
 -j(1+p^r+\cdots+p^{(h-1)r})\pmod u.
$$
This congruence has either no solution or
$\gcd(p^{rh}-1,u)$ solutions. Subtracting the points of smaller least
period determines the greatest cycle length $\ell_L$ of $\alpha$.
Hence the greatest $L$-orbit length is
$$
 m_L=\frac{Q-1}{u}\ell_L.
$$
If $|v^L|=m$, then $L_v$ embeds in the cyclic Galois quotient. Thus
$\nu(L_v)=|L_v|=|L|/m$, and consequently
$$
 \rho_L(v)=\frac{m^2}{|L|}
 \qquad\text{and}\qquad
 n_L=\frac{m_L^2}{|L|}.
$$

Since $\nu(L)\leq|L|$, the required inequality follows whenever
$n_L^3\geq|L|$. The program finds only three nontrivial subgroups for
which $n_L^3<|L|$. For these groups it computes $\nu(L)$ from an
isomorphic polycyclic presentation:
$$
\begin{array}{c|c|c|c|c|c}
 Q&(u,r,j)&|L|&m_L&\nu(L)&n_L^3/\nu(L)\\\hline
 4 &(1,1,0)&6 &3 &3 &9/8\\
 8 &(1,1,0)&21&7 &7 &49/27\\
 16&(1,1,0)&60&15&15&225/64
\end{array}
$$
All three ratios exceed $1$. The program finds one nontrivial equality case,
with $Q=9$, $(u,r,j)=(2,1,0)$, $|L|=\nu(L)=8$, $m_L=4$ and $n_L=2$.
Altogether the program enumerates $1\,896\,985$ nontrivial subgroups. The
trivial subgroup also gives equality.\qedhere
\end{proof}

\section*{Acknowledgements}

I thank my supervisors, Prof. David Craven and Prof. Chris Parker, for reading this paper and providing helpful comments. I am grateful to
Dr. Luca Sabatini for introducing me to Gluck's conjecture on an afternoon in November 2024 at the Mathematics Institute, University of Warwick. I also thank Prof.~Derek Holt for a helpful discussion concerning possible corrections to the classification in~\cite{HY23} during the conference \emph{Modern Flavours of Finite Groups}. During the preparation of this paper, the GPT-5.4 model was used to assist with debugging and improving some verification programs and translating \GAP~code into \Magma. I checked and revised the resulting text and code, verified the computations, and take full responsibility for the contents of the paper.

\end{document}